\newtheorem{theorem}{Theorem}[subsection]
\newtheorem{lemma}[theorem]{Lemma}
\newtheorem{proposition}[theorem]{Proposition}
\newtheorem{corollary}[theorem]{Corollary}
\theoremstyle{definition}
\newtheorem{definition}[theorem]{Definition}
\theoremstyle{remark}
\numberwithin{equation}{section}
\newcommand{\des}{\ensuremath\mathrm{des}}
\newcommand{\Des}{\ensuremath\mathrm{Des}}
\newcommand{\weakarrow}{\ensuremath\leftsquigarrow}
\newcommand{\wP}{\ensuremath\widetilde{P}}
\newcommand{\wQ}{\ensuremath\widetilde{Q}}
\newcommand{\wD}{\ensuremath\widetilde{d}}
\newcommand{\word}{\ensuremath\mathrm{word}}
\newcommand{\SYT}{\ensuremath\mathrm{SYT}}
\newcommand{\row}{\ensuremath\mathrm{row}}
\newcommand{\R}{\ensuremath{R}}
\newcommand{\Yam}{\ensuremath{YR}}
\newcommand{\D}{\ensuremath\mathbb{D}}
\newcommand{\lift}{\ensuremath\mathrm{lift}}
\newcommand{\drop}{\ensuremath\mathrm{drop}}
\newcommand{\SKT}{\ensuremath\mathrm{SKT}}
\newcommand{\swap}{\ensuremath\mathfrak{c}}
\newcommand{\braid}{\ensuremath\mathfrak{b}}
\newcommand{\CK}{\ensuremath\mathfrak{d}}
\newcommand{\flatten}{\ensuremath\mathrm{flat}}
\newcommand{\schubert}{\ensuremath\mathfrak{S}}
\newcommand{\fund}{\ensuremath\mathfrak{F}}
\newcommand{\key}{\ensuremath\kappa}
\newcommand{\stanley}{\ensuremath\mathrm{S}}
\newlength\cellsize \setlength\cellsize{12\unitlength}
\newcommand\cellify[1]{\def\thearg{#1}\def\nothing{}%
\ifx\thearg\nothing\vrule width0pt height\cellsize depth0pt%
  \else\hbox to 0pt{\usebox2\hss}\fi%
  \vbox to 12\unitlength{\vss\hbox to 12\unitlength{\hss$#1$\hss}\vss}}
\newcommand\tableau[1]{\vtop{\let\\=\cr
\setlength\baselineskip{-12000pt}
\setlength\lineskiplimit{12000pt}
\setlength\lineskip{0pt}
\halign{&\cellify{##}\cr#1\crcr}}}
\newcommand\nocellify[1]{\def\thearg{#1}\def\nothing{}%
\ifx\thearg\nothing\vrule width0pt height\cellsize depth0pt%
  \else\hbox to 0pt{\hss}\fi%
  \vbox to 12\unitlength{\vss\hbox to 12\unitlength{\hss$#1$\hss}\vss}}
\newcommand\notableau[1]{\vtop{\let\\=\cr
\setlength\baselineskip{-12000pt}
\setlength\lineskiplimit{12000pt}
\setlength\lineskip{0pt}
\halign{&\nocellify{##}\cr#1\crcr}}}
\newcommand{\cir}[1]{\def\thearg{#1}\def\nothing{}%
\ifx\thearg\nothing\vrule width0pt height\cellsize depth0pt%
  \else\hbox to 0pt{\usebox4\hss}\fi%
  \vbox to 10\unitlength{\vss\hbox to 10\unitlength{\hss$#1$\hss}\vss}}
\begin{document}


\title[Generalized Edelman--Greene insertion]{A generalization of Edelman--Greene insertion \\ for Schubert polynomials}  

\author{Sami Assaf}
\address{Department of Mathematics, University of Southern California, 3620 South Vermont Avenue, Los Angeles, CA 90089-2532, U.S.A.}
\email{shassaf@usc.edu}
\thanks{Work supported in part by NSF DMS-1763336.}

\subjclass[2010]{%
  Primary 05A05; %
  Secondary 05A15, 05A19, 14N15}




\keywords{Schubert polynomials, Demazure characters, key polynomials, RSK, Edelman--Greene insertion, reduced words}

\begin{abstract}
  Edelman and Greene generalized the Robinson--Schensted--Knuth correspondence to reduced words in order to give a bijective proof of the Schur positivity of Stanley symmetric functions. Stanley symmetric functions may be regarded as the stable limits of Schubert polynomials, and similarly Schur functions may be regarded as the stable limits of Demazure characters for the general linear group. We modify the Edelman--Greene correspondence to give an analogous, explicit formula for the Demazure character expansion of Schubert polynomials. Our techniques utilize dual equivalence and its polynomial variation, but here we demonstrate how to extract explicit formulas from that machinery which may be applied to other positivity problems as well.
\end{abstract}

\maketitle

%
\section{Introduction}
%
\label{sec:introduction}

Schur functions, the ubiquitous basis for symmetric functions with deep connections to representation theory and geometry, may be regarded as the generating functions for standard Young tableaux. In an analogous way, Stanley \cite{Sta84} defined a generating function for reduced words that he proved was symmetric and conjectured was Schur positive. Edelman and Greene \cite{EG87} established a bijective correspondence between reduced words and ordered pairs of Young tableaux of the same partition shape such that the left is increasing with reduced reading word and the right is standard. Thus through this correspondence they proved Stanley's conjecture and, moreover, gave an explicit formula for the Schur expansion as the number of such left tableaux that can appear in the correspondence. 

Schubert polynomials were introduced by Lascoux and Sch{\"u}tzenberger \cite{LS82} as polynomial representatives of Schubert classes for the cohomology of the flag manifold with nice algebraic and combinatorial properties. They can be defined as the generating polynomials of reduced words \cite{BJS93,AS17}, and in the stable limit, they become the Stanley symmetric functions \cite{Mac91}. Parallel to this, Demazure characters for the general linear group \cite{Dem74a} can be regarded as the generating polynomials for standard key tableaux \cite{AS18,Ass-W}, and in the stable limit, the key tableaux become Young tableaux and the Demazure characters become Schur functions \cite{LS90}. Lascoux and Sch{\"u}tzenberger \cite{LS90} noticed that the Schubert polynomials expand nonnegatively into Demazure characters parallel to the nonnegative expansion of Stanley symmetric functions into Schur functions. The proof \cite{LS90,RS95} uses the same structure of partitioning reduced words into equivalence classes based on the Edelman--Greene right tableau, yet there is no direct formula for the coefficents given. 

In this paper, we complete the analogy between the function and polynomial settings by providing a new bijective correspondence between reduced words and ordered pairs of key tableaux of the same weak composition shape such that the left is \emph{Yamanouchi} with reduced reading word and the right is standard. Thus through this correspondence we prove the Demazure positivity of Schubert polynomials and, moreover, give an explicit formula for the Demazure expansion as the number of such left tableaux that can appear in the correspondence.  

Our real purpose, in addition to this explicit result, is to provide a framework by which one can extract explicit Schur expansions of symmetric functions through the machinery of \emph{dual equivalence} and explicit Demazure expansions of polynomials through the machinery of \emph{weak dual equivalence}. Dual equivalence and its weak variant give universal methods for proving positivity results, but they do so indirectly without giving tractable formulas. Our techniques in this paper utilize dual equivalence to show how it can manipulated to give the desired formulas.

This paper is structured as follows. We begin in Section~\ref{sec:genfun} with a review of definitions for reduced words for permutations. We develop parallel theories of the generating functions for reduced words, reviewing Stanley symmetric functions \cite{Sta84} in Section~\ref{sec:genfun-stanley} and generating polynomials for reduced words, reviewing Schubert polynomials \cite{LS82} in Section~\ref{sec:genfun-schubert}.

Questions of positivity arise in Section~\ref{sec:equiv}, where we consider the Coxeter--Knuth equivalence relations \cite{EG87} on reduced words. Maintaining our parallel study, in Section~\ref{sec:CK-dual}, we review the machinery of dual equivalence \cite{Ass15} to see that the generating function of a Coxeter--Knuth equivalence class on reduced words is a Schur function, thus recovering the Schur positivity result of Edelman and Greene for Stanley symmetric functions \cite{EG87}. In Section~\ref{sec:CK-weak}, we review the machinery of weak dual equivalence \cite{Ass-W} to see that the generating polynomial of a Coxeter--Knuth equivalence class on reduced words is a Demazure character, thus recovering the Demazure positivity result of Lascoux and Sch{\"u}tzenberger for Schubert polynomials \cite{LS90,RS95}. 

In Section~\ref{sec:positive}, we embark on the quest to extract explicit formulas for these expansions by finding canonical representatives for the Coxeter--Knuth equivalence classes. In Section~\ref{sec:positive-increasing}, we recover the explicit formula of Edelman and Greene for the Schur expansion Stanley symmetric functions \cite{EG87} using simple techniques that avoid the subtlety of their insertion algorithm. In Section~\ref{sec:positive-yamanouchi} we use similar techiques to arrive at our main result: an explicit combinatorial formula for the Demazure expansion of Schubert polynomials. 

Finally, we return to the inspiration of this work in Section~\ref{sec:insertion}, where we present explicit insertion algorithms. In Section~\ref{sec:insertion-EG}, we review the Edelman--Greene correspondence that associates to each reduced word a pair Young tableaux, and then in Section~\ref{sec:insertion-weak} we use results from Section~\ref{sec:positive} to modify this correspondence to associate to each reduced word a pair of key tableaux. In this way, we complete the parallel stories with satisfactory formulas for both cases.

%

%
\section{Generating functions for reduced words}
%
\label{sec:genfun}

The symmetric group $\mathcal{S}_n$ has generators $s_i$, the simple transpositions interchanging $i$ and $i+1$, and relations $s_i^2$ is the identity, $s_i s_j = s_j s_i$ for $|i-j| \geq 2$, and $s_i s_{i+1} s_i = s_{i+1} s_i s_{i+1}$ for $1\leq i \leq n-2$.

An \emph{expression} for a permutation $w \in \mathcal{S}_n$ is a way of writing $w$ in terms of these simple generators, i.e. $w = s_{\rho_{\ell}} \cdots s_{\rho_1}$. The \emph{length} of $w$, denoted by $\ell(w)$ is the number of pairs $(i<j)$ such that $w_i > w_j$. If an expression for $w$ has exactly $\ell(w)$ terms, then it is \emph{reduced}. In this case, the sequence of indices $\rho = (\rho_{\ell(w)}, \ldots, \rho_1)$ such that $w = s_{\rho_{\ell(w)}} \cdots s_{\rho_1}$ is called a \emph{reduced word} for $w$. 

For example, there are two reduced expressions for the permutation $321$, namely $s_1 s_2 s_1$ and $s_2 s_1 s_2$, both of which have length $3$ since there are $3$ inversions in $321$. Therefore we say that $(1,2,1)$ and $(2,1,2)$ are reduced words for $321$. For a more elaborate example, Fig.~\ref{fig:reduced} shows the $11$ reduced words for the permutation $153264$.

\begin{figure}[ht]
  \begin{displaymath}
    \arraycolsep=5pt
    \begin{array}{cccccc}
      (5 , 3 , 2,3,4) & (5 , 2,3 , 2,4) & (5 , 2,3,4 , 2) & (3,5 , 2,3,4) &
      (3 , 2,5 , 3,4) & (3 , 2,3,5 , 4) \\ (2,5 , 3,4 , 2) & (2,3,5 , 4 , 2) &
      (2,5 , 3 , 2,4) & (2,3,5 , 2,4) & (2,3 , 2,5 , 4) & 
    \end{array}
  \end{displaymath}
   \caption{\label{fig:reduced}The set of reduced words for $w = 153264$.}
\end{figure}

We consider the set $\R(w)$ of reduced words for a given permutation $w$. Below we present two different generating functions for this set by assigning either a symmetric function or a polynomial to each reduced word.

\subsection{Stanley symmetric functions}
\label{sec:genfun-stanley}

Stanley \cite{Sta84} defined a family of symmetric functions in order to enumerate reduced words. These functions have since been realized to have important connections with geometry and representation theory.

A \emph{weak composition} $a = (a_1,\ldots,a_{n})$ is a sequence of nonnegative integers. A \emph{composition} $\alpha = (\alpha_1,\ldots,\alpha_{\ell})$ is a sequence of positive integers.  A \emph{partition} $\lambda = (\lambda_1 \geq \cdots \geq \lambda_{\ell})$ is a weakly decreasing sequence of positive integers. Given compositions $\alpha,\beta$, we say \emph{$\beta$ refines $\alpha$} if there exist indices $i_1<\ldots<i_{\ell}$ such that
\begin{displaymath}
  \beta_1 + \cdots + \beta_{i_j} = \alpha_1 + \cdots + \alpha_j.
\end{displaymath}
For example, $(1,2,2)$ refines $(3,2)$ but does not refine $(2,3)$.

Gessel introduced the \emph{fundamental quasisymmetric functions} \cite{Ges84}, indexed by compositions, that form an important basis for quasisymmetric functions.

\begin{definition}[\cite{Ges84}]
  For $\alpha$ a composition, the \emph{fundamental quasisymmetric function} $F_{\alpha}$ is 
  \begin{equation}
    F_{\alpha}(x_1,x_2,\ldots) = \sum_{\flatten(b) \ \mathrm{refines} \ \alpha} x_1^{b_1} x_2^{b_2} \cdots,
    \label{e:F_n}
  \end{equation}
  where the sum is over weak compositions $b$ for which the composition $\flatten(b)$ obtained by removing all parts equal to $0$ refines $\alpha$.
\end{definition}
  
For example, restricting to three variables to make the expansion finite, we have
\begin{displaymath}
  F_{(3,2)}(x_1,x_2,x_3) = x_2^3 x_3^2 + x_1^3 x_3^2 + x_1^3 x_2^2 + x_1^3 x_2 x_3 + x_1 x_2^2 x_3^2 + x_1^2 x_2 x_3^2 .
\end{displaymath}

Stanley \cite{Sta84} defined a family of symmetric functions indexed by permutations that are the fundamental quasisymmetric generating functions for reduced words. To define this, we associate a composition to each reduced word.

\begin{definition}
  The \emph{run decomposition} of a reduced word $\rho$ partitions $\rho$ into increasing sequences $\rho=(\rho^{(k)} | \cdots | \rho^{(1)})$ of maximal length. The \emph{descent composition of $\rho$}, denoted by $\Des(\rho)$, is the composition $(|\rho^{(1)}|,\ldots,|\rho^{(k)}|)$. 
  \label{def:Des}
\end{definition}

For example, $\rho = (3,6,4,7,5,2,4)$ and $\sigma = (6, 7, 3, 4, 5, 2, 4)$, two reduced words for $w = 13625847$, have run decompositions
\[ (\overbrace{3,6}^{\rho^{(4)}} \mid \overbrace{3,4}^{\rho^{(3)}} \mid \overbrace{5}^{\rho^{(2)}} \mid \overbrace{2,4}^{\rho^{(1)}}) \hspace{2em} \text{and} \hspace{2em} (\overbrace{6,7}^{\sigma^{(3)}} \mid \overbrace{3,4,5}^{\sigma^{(2)}} \mid \overbrace{2,4}^{\sigma^{(1)}}), \]
giving $\Des(\rho) = (2,1,2,2)$ and $\Des(\sigma) = (2,3,2)$. Note the reversal of lengths. 

We may visualize the descent composition via positive integer fillings of cell diagrams as follows.

\begin{definition}
  The \emph{descent tableau} of a reduced word $\rho$, denoted by $\D(\rho)$, is the filling of unit cells in the first quadrant constructed as follows. Place $\rho_{\ell(w)}$ into the first column of row $|\Des(\rho)|$. For $i = \ell(w)-1,\ldots,2,1$, place $\rho_i$ immediately right of $\rho_{i+1}$ if $\rho_{i+1} < \rho_i$; otherwise in the first column of the next row down.
  \label{def:d-red}
\end{definition}

For example, $\rho = (3,6,4,7,5,2,4)$ is inserted as shown on the left side of Figure~\ref{fig:Des}, and $\sigma = (6, 7, 3, 4, 5, 2, 4)$ inserts as shown on the right side of Figure~\ref{fig:Des}. 

\begin{figure}[ht]
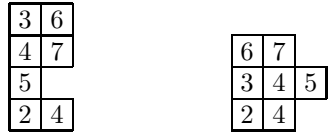

  \begin{displaymath}
    \tableau{3 & 6 \\ 4 & 7 \\ 5 \\ 2 & 4 }
    \hspace{5\cellsize}
    \tableau{\\ 6 & 7 \\ 3 & 4 & 5 \\ 2 & 4 }
  \end{displaymath}
   \caption{\label{fig:Des}Constructing the descent tableaux for reduced words $\rho = (3,6,4,7,5,2,4)$ (left) and $\sigma=(6, 7, 3, 4, 5, 2, 4)$ (right).}
\end{figure}

By construction, rows of $\D(\rho)$ are increasing and the descent composition for $\rho$ is given by the lengths of the rows of the descent tableau for $\rho$, read bottom to top. 

\begin{definition}[\cite{Sta84}]
  For $w$ a permutation, the \emph{Stanley symmetric function} $\stanley_{w}$ is 
  \begin{equation}
    \stanley_{w}(X) = \sum_{\rho \in \R(w)} F_{\Des(\rho)}(X).
    \label{e:stanley}
  \end{equation}
  \label{def:stanley}
\end{definition}

For example, from Fig.~\ref{fig:reduced}, we compute
\begin{eqnarray*}
  \stanley_{153264}(X) & = & F_{(3,1,1)}(X) + 2 F_{(2,2,1)}(X) + 2 F_{(1,3,1)}(X) + F_{(3,2)}(X) \\
  & & + 2 F_{(1,2,2)}(X) + F_{(1,1,3)}(X) + F_{(2,1,2)}(X) + F_{(2,3)}(X) .
\end{eqnarray*}

\subsection{Schubert polynomials}
\label{sec:genfun-schubert}

Lascoux and Sch{\"u}tzenberger \cite{LS82} introduced \emph{Schubert polynomials} as a basis for the polynomial ring that gives polynomial representatives of Schubert classes for the cohomology of the flag manifold with nice algebraic and combinatorial properties. 

Assaf and Searles \cite{AS17} introduced \emph{fundamental slide generating polynomials} as a generalization of the fundamental quasisymmetric functions that form a basis for the full polynomial ring. They showed that Schubert polynomials are the fundamental slide generating polynomials for reduced words.

\begin{definition}[\cite{AS17}]
  For a weak composition $a$ of length $n$, the \emph{fundamental slide polynomial} $\fund_{a} = \fund_{a}(x_1,\ldots,x_n)$ is
  \begin{equation}
    \fund_{a} = \sum_{\substack{b_1 + \cdots + b_k \geq a_1 + \cdots + a_k \ \forall k \\ \mathrm{flat}(b) \ \mathrm{refines} \ \mathrm{flat}(a)}} x_1^{b_1} \cdots x_n^{b_n},
    \label{e:fund-shift}
  \end{equation}
  where $\mathrm{flat}(a)$ denotes the composition obtained by removing all zero parts.
  \label{def:fund-shift}
\end{definition}

For example, we have
\begin{displaymath}
  \fund_{(3,0,2)}(x_1,x_2,x_3) = x_1^3 x_3^2 + x_1^3 x_2^2 + x_1^3 x_2 x_3 ,
\end{displaymath}
which is not equal to $F_{(3,2)}(x_1,x_2,x_3)$ computed earlier. However, we do have $\fund_{(0,3,2)} = F_{(3,2)}(x_1,x_2,x_3)$. Moreover, Assaf and Searles proved fundamental slide polynomials stabilize \cite{AS17}(Theorem~4.5).

\begin{proposition}[\cite{AS17}]
  For a weak composition $a$, we have
  \begin{equation}
    \lim_{m \rightarrow \infty} \fund_{0^m \times a}(x_1,\ldots,x_m,0,\ldots,0) = F_{\flatten(a)}(X),
  \end{equation}
  where $0^m \times a$ is the weak composition obtained by prepending $m$ $0$'s to $a$.
  \label{prop:fund-stable}
\end{proposition}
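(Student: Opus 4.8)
The plan is to compare the two defining sums monomial by monomial. Fix $a = (a_1,\ldots,a_n)$ and write $\alpha = \flatten(a)$ and $d = |a| = |\alpha|$. By Definition~\ref{def:fund-shift}, $\fund_{0^m \times a}$ is a polynomial in $x_1,\ldots,x_{m+n}$, and the monomials that survive in the specialization obtained by setting the last $n$ variables to $0$ are exactly those $x_1^{b_1}\cdots x_{m+n}^{b_{m+n}}$ with $b_{m+1} = \cdots = b_{m+n} = 0$; such a $b$ is the same data as a weak composition $b' = (b_1,\ldots,b_m)$ of length $m$, and $\flatten(b) = \flatten(b')$. First I would record that among these $b$, the requirement ``$\flatten(b)$ refines $\alpha$'' already forces $|b'| = d$, since refinement of compositions preserves total weight.

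The key step is then to check that, for such $b$, the dominance inequalities $b_1 + \cdots + b_k \geq (0^m \times a)_1 + \cdots + (0^m \times a)_k$ hold automatically for every $k$, and hence impose nothing. Indeed, for $k \leq m$ the right-hand side is $0 \leq b_1 + \cdots + b_k$; while for $k > m$ the left-hand side equals $b_1 + \cdots + b_m = |b'| = d$ (using that $b_{m+1} = \cdots = b_{m+n} = 0$), which is at least $(0^m \times a)_1 + \cdots + (0^m \times a)_k \le |0^m\times a| = d$. Therefore the surviving monomials of $\fund_{0^m \times a}(x_1,\ldots,x_m,0,\ldots,0)$ are precisely the $x_1^{b_1}\cdots x_m^{b_m}$ ranging over weak compositions $(b_1,\ldots,b_m)$ whose flattening refines $\alpha$; comparing with \eqref{e:F_n}, this is exactly the specialization $F_{\alpha}(x_1,\ldots,x_m)$ of the fundamental quasisymmetric function to $m$ variables.

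Finally I would pass to the limit. Since $F_\alpha(x_1,\ldots,x_m)$ is the image of $F_\alpha(X)$ under setting $x_{m+1}, x_{m+2},\ldots$ to $0$, the coefficient of any fixed monomial in $F_\alpha(x_1,\ldots,x_m)$ is eventually constant in $m$ and equal to its coefficient in $F_\alpha(X)$ (explicitly, a monomial involving exactly the variables up to $x_r$ appears in $F_\alpha(x_1,\ldots,x_m)$ with its $F_\alpha(X)$-coefficient as soon as $m \geq r$). Hence $\lim_{m\to\infty} \fund_{0^m \times a}(x_1,\ldots,x_m,0,\ldots,0) = F_{\flatten(a)}(X)$, as claimed.

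I do not expect a serious obstacle here; the proof is essentially an exercise in matching the two definitions, and is the argument already implicit in \cite{AS17}. The one point needing care — and the only place where the specialization is used in an essential way — is the verification that prepending the $m$ zeros and then discarding the last $n$ variables renders every partial-sum (``dominance'') inequality vacuous. This rests on the observation that any refinement of $\alpha$ still has total weight $d$, so once that weight is forced into the first $m$ coordinates, each dominance inequality is satisfied trivially; without the specialization the intermediate inequalities genuinely cut the sum down, so it is exactly the passage to $0^m \times a$ together with the truncation that collapses $\fund$ to $F$.
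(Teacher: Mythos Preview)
Your argument is correct. The paper itself does not supply a proof of this proposition; it simply quotes the result from \cite{AS17}(Theorem~4.5), so there is nothing in the paper to compare against, and your direct monomial-matching verification is exactly the kind of elementary check one would expect.
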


We generalize the descent composition of a reduced word to a weak composition as defined in \cite{Ass-T}(Definition~3.2).

\begin{definition}[\cite{Ass-T}]
  For a reduced word $\rho$, define the \emph{weak descent composition of $\rho$}, denoted by $\des(\rho)$, as follows. Let $(\rho^{(k)} | \cdots | \rho^{(1)})$ be the run decomposition of $\rho$, that is, each $\rho^{(i)}$ is increasing and as long as possible. Set $r_i = \min(\rho^{(i)})$ for $i=1,\ldots,k$. Set $\hat{r}_k = r_k$, and for $i<k$, set $\hat{r}_i = \min(r_i,\hat{r}_{i+1}-1)$. If $r_1 \leq 0$, then define $\des(\rho) = \varnothing$; otherwise, set the $\hat{r}_i$th part of $\des(\rho)$ to be $\des(\rho)_{\hat{r}_i} = |\rho^{(i)}|$ and set all other parts to $0$.
  \label{def:des-red}
\end{definition}

Revisiting our previous examples, for $\rho = (3,6 \mid 4,7 \mid 5 \mid 2,4)$, we have $r_4 = 3$, $\hat{r}_3 = \min(4,3-1) = 2$, $\hat{r}_2 = \min(5,2-1) = 1$, and finally $\hat{r}_1 = \min(2,1-1) = 0$ giving $\des(\rho) = \varnothing$ since $\hat{r}_1 =0$. For $\sigma = (6, 7 \mid 3, 4, 5 \mid 2, 4)$, we have $s_3 = 6$, $\hat{s}_2 = \min(3,6-1) = 3$, and $\hat{s}_2 = \min(2,3-1) = 2$, giving $\des(\sigma) = (0,2,3,0,0,2)$.

We may visualize Definition~\ref{def:des-red} via a simple insertion algorithm as follows.

\begin{definition}
  The \emph{weak descent tableau} of a reduced word $\rho$, denoted by $\D(\rho)$, is the following filling of unit cells in the right half plane. Place $\rho_{\ell(w)}$ into the first column of row $\rho_{\ell(w)}$. For $i = \ell(w)-1,\ldots,2,1$, place $\rho_i$ immediately right of $\rho_{i+1}$ if $\rho_{i+1} < \rho_i$, or in the first column of the lower of row $\rho_{i}$ or the row below $\rho_{i+1}$.
  \label{def:diag-red}
\end{definition}

For example, $\rho = (3,6,4,7,5,2,4)$ is inserted as shown on the left side of Figure~\ref{fig:des}, and $\sigma = (6, 7, 3, 4, 5, 2, 4)$ inserts as shown on the right side of Figure~\ref{fig:des}.

\begin{figure}[ht]
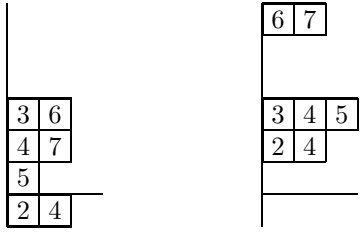

  \begin{displaymath}
    \vline\tableau{ \\ \\ \\ 3 & 6 \\ 4 & 7 & \\ 5 \\\hline 2 & 4 }
    \hspace{5\cellsize}
    \vline\tableau{6 & 7 \\ \\ \\ 3 & 4 & 5 \\ 2 & 4 \\  \\\hline & }
  \end{displaymath}
   \caption{\label{fig:des}Constructing the weak descent tableaux for reduced words $\rho = (3,6,4,7,5,2,4)$ (left) and $\sigma=(6, 7, 3, 4, 5, 2, 4)$ (right).}
\end{figure}

Notice $\des(\rho) = \varnothing$ if and only if there is an occupied row of $\D(\rho)$ with non-positive index, and otherwise $\des(\rho)_i$ is the number of entries in row $i$ of $\D(\rho)$.

We say that $\rho$ is \emph{virtual} if $\des(\rho) = \varnothing$. To facilitate virtual objects, set
\begin{equation}
  \fund_{\varnothing} = 0.
\end{equation}

Billey, Jockusch, and Stanley \cite{BJS93}(Theorem~1.1) gave a combinatorial definition for the monomial expansion of Schubert polynomials in terms of compatible sequences for reduced words. Assaf and Searles \cite{AS17}(Theorem~3.13) refined this to give a combinatorial model for the expansion in terms of fundamental slide polynomials. The re-formulation of the latter given below appears in \cite{Ass-T}(Theorem~3.3), and we take this as our definition.

\begin{definition}
  For $w$ any permutation, the \emph{Schubert polynomial} $\schubert_w$ is 
  \begin{equation}
    \schubert_{w} = \sum_{\rho \in \R(w)} \fund_{\des(P)},
    \label{e:schubert-slide}
  \end{equation}
  where the sum may be taken over non-virtual reduced words $\rho$.
  \label{def:schubert}
\end{definition}

For example, the seven non-virtual reduced words for $\R(153264)$ give
\begin{eqnarray*}
  \schubert_{153264} & = & \fund_{(0,3,1,0,1)} + \fund_{(2,2,0,0,1)} + \fund_{(1,3,0,0,1)} + \fund_{(0,3,2,0,0)} \\ & & + \fund_{(2,2,1,0,0)} + \fund_{(1,3,1,0,0)} + \fund_{(2,3,0,0,0)}.
\end{eqnarray*}

Macdonald \cite{Mac91}(7.18) showed that Schubert polynomials stabilize and that their stable limits are precisely the Stanley symmetric functions. This follows directly from Proposition~\ref{prop:fund-stable} by Definitions~\ref{def:schubert} and \ref{def:stanley} as well.

\begin{proposition}[\cite{Mac91}]
  For $w$ a permutation, we have
  \begin{equation}
    \lim_{m \rightarrow \infty} \schubert_{1^m \times w}(x_1,\ldots,x_m,0,\ldots,0) = \stanley_{w}(X),
  \end{equation}
  where $1^m \times w$ is the permutation obtained by adding $m$ to each $w_i$ and then prepending $1 2 \cdots m$.
  \label{prop:stanley-stable}
\end{proposition}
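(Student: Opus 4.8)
The plan is to reduce the statement to the stabilization of fundamental slide polynomials (Proposition~\ref{prop:fund-stable}) by comparing the defining sums in Definitions~\ref{def:schubert} and \ref{def:stanley} term by term, under a natural identification of reduced words.

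First I would record how the operation $w \mapsto 1^m \times w$ acts on reduced words. Since $1^m \times w$ lies in the parabolic subgroup of $\mathcal{S}_{m+n}$ generated by $s_{m+1}, \ldots, s_{m+n-1}$ and has the same length as $w$, the map $\rho \mapsto \rho[m]$ that adds $m$ to every letter of $\rho$ is a length-preserving bijection $\R(w) \to \R(1^m \times w)$. Translating every letter by the positive integer $m$ does not change which consecutive pairs form ascents, so the run decomposition of $\rho[m]$ is obtained from that of $\rho$ by adding $m$ to every entry; in particular $\Des(\rho[m]) = \Des(\rho)$. I would then observe that the statistic $\des$ of Definition~\ref{def:des-red} is translation-equivariant: if the run decomposition of $\rho$ is $(\rho^{(k)} \mid \cdots \mid \rho^{(1)})$ with minima $r_i$ and auxiliary values $\hat{r}_i$, then the minima for $\rho[m]$ are $r_i + m$ and the recursion $\hat{r}_k = r_k$, $\hat{r}_i = \min(r_i, \hat{r}_{i+1} - 1)$ produces $\hat{r}_i + m$. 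Because $\hat{r}_1 < \hat{r}_2 < \cdots < \hat{r}_k$, once $m$ is large enough that $\hat{r}_1 + m > 0$ the word $\rho[m]$ is non-virtual, $\des(\rho[m])$ has the value $|\rho^{(i)}|$ in position $\hat{r}_i + m$ and zeros elsewhere, and increasing $m$ by one merely prepends a zero. Hence for all sufficiently large $m$ one has $\des(\rho[m]) = 0^{m-M} \times c$ for a fixed weak composition $c$, with $\flatten(\des(\rho[m])) = (|\rho^{(1)}|, \ldots, |\rho^{(k)}|) = \Des(\rho)$.

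With this in hand the conclusion follows quickly. For each fixed $\rho \in \R(w)$, Proposition~\ref{prop:fund-stable} applied to $c$ (up to a harmless finite shift in the index of the limit, and using $\fund_{\varnothing} = 0$ to discard the finitely many virtual cases) gives $\lim_{m \to \infty} \fund_{\des(\rho[m])}(x_1, \ldots, x_m, 0, \ldots) = F_{\Des(\rho)}(X)$. By the bijection above and Definition~\ref{def:schubert}, $\schubert_{1^m \times w}(x_1, \ldots, x_m, 0, \ldots) = \sum_{\rho \in \R(w)} \fund_{\des(\rho[m])}(x_1, \ldots, x_m, 0, \ldots)$, which is a finite sum, so the limit passes through the sum and yields $\sum_{\rho \in \R(w)} F_{\Des(\rho)}(X) = \stanley_w(X)$ by Definition~\ref{def:stanley}. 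The only point requiring any real care is the bookkeeping of the previous paragraph — verifying that $\des$ commutes with the shift and eventually just accrues leading zeros — after which the result genuinely "follows directly" from Proposition~\ref{prop:fund-stable}.
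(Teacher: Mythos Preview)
Your proposal is correct and follows exactly the approach the paper indicates: the paper does not give a detailed proof but simply remarks that the result ``follows directly from Proposition~\ref{prop:fund-stable} by Definitions~\ref{def:schubert} and \ref{def:stanley} as well,'' and you have carefully supplied the bookkeeping (the bijection $\rho \mapsto \rho[m]$ on reduced words and the translation-equivariance of $\des$) that makes this direct deduction go through.
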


%
\section{Equivalence relations}
%
\label{sec:equiv}

We consider simple involutions based on the Coxeter relations for the simple transpositions that generate the symmetric group.

Given $\rho\in\R(w)$, for $1\leq j<\ell(w)$, let $\swap_j$ denote the \emph{commutation relation} that acts by exchanging $\rho_j$ and $\rho_{j+1}$ if $|\rho_j-\rho_{j+1}|>1$ and the identity otherwise.

Given $\rho\in\R(w)$, for $1 < j<\ell(w)$, let $\braid_j$ denote the \emph{braid relation} that acts by sending $\rho_{j+1} \rho_{j} \rho_{j-1}$ to $\rho_{j} \rho_{j+1} \rho_{j}$ if $\rho_{j+1} = \rho_{j-1}$ and the identity otherwise.

Any reduced words in the same equivalence class under $\{\swap_j,\braid_j\}$ are called \emph{Coxeter equivalent}. A classical result of Tits \cite{Tit69} states that each set $\R(w)$ is a single Coxeter equivalence class. For examples, see Fig.~\ref{fig:coxeter}.

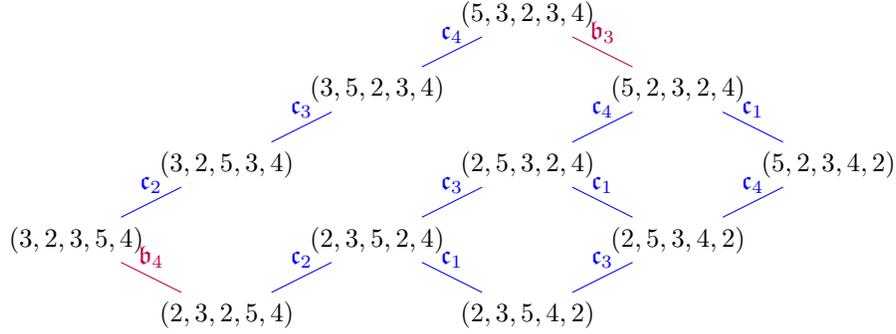
\begin{figure}[ht]
  \begin{center}
    \begin{tikzpicture}[xscale=2,yscale=1]
      \node at (3,5) (C5) {$(5,3,2,3,4)$};
      \node at (2,4) (B4) {$(3,5,2,3,4)$};
      \node at (4,4) (D4) {$(5,2,3,2,4)$};
      \node at (1,3) (A3) {$(3,2,5,3,4)$};
      \node at (3,3) (C3) {$(2,5,3,2,4)$};
      \node at (5,3) (E3) {$(5,2,3,4,2)$};
      \node at (0,2) (A2) {$(3,2,3,5,4)$};
      \node at (2,2) (C2) {$(2,3,5,2,4)$};
      \node at (4,2) (E2) {$(2,5,3,4,2)$};
      \node at (1,1) (B1) {$(2,3,2,5,4)$};
      \node at (3,1) (D1) {$(2,3,5,4,2)$};
      \draw[thin,color=blue  ] (C5) -- (B4) node[midway,above] {$\swap_4 $};
      \draw[thin,color=purple] (C5) -- (D4) node[midway,above] {$\braid_3$};
      \draw[thin,color=blue  ] (B4) -- (A3) node[midway,above] {$\swap_3 $};
      \draw[thin,color=blue  ] (D4) -- (C3) node[midway,above] {$\swap_4 $};
      \draw[thin,color=blue  ] (D4) -- (E3) node[midway,above] {$\swap_1 $};
      \draw[thin,color=blue  ] (A3) -- (A2) node[midway,above] {$\swap_2 $};
      \draw[thin,color=blue  ] (C3) -- (C2) node[midway,above] {$\swap_3 $};
      \draw[thin,color=blue  ] (C3) -- (E2) node[midway,above] {$\swap_1 $};
      \draw[thin,color=blue  ] (E3) -- (E2) node[midway,above] {$\swap_4 $};
      \draw[thin,color=purple] (A2) -- (B1) node[midway,above] {$\braid_4$};
      \draw[thin,color=blue  ] (C2) -- (B1) node[midway,above] {$\swap_2 $};
      \draw[thin,color=blue  ] (C2) -- (D1) node[midway,above] {$\swap_1 $};
      \draw[thin,color=blue  ] (E2) -- (D1) node[midway,above] {$\swap_3 $};
    \end{tikzpicture}
  \caption{\label{fig:coxeter}An illustration of the Coxeter relation involutions on $\R(153264)$.}
  \end{center}
\end{figure}

Knuth \cite{Knu70} considered relations on permutations that characterize when two permutations give rise to the same Schensted insertion tableau \cite{Sch61}. Analogously, Edelman and Greene \cite{EG87} characterize when two reduced words give rise to the same Edelman--Greene insertion tableau using \emph{elementary Coxeter-Knuth relations}.

\begin{definition}
  For $1<i<\ell(w)$, the \emph{elementary Coxeter-Knuth relation} $\CK_i$ acts on a reduced word $\rho \in\R(w)$ by
  \begin{equation}
    \CK_i (\rho) = \left\{ \begin{array}{rl}
      \braid_i   (\rho) & \mbox{if } \rho_{i+1} = \rho_{i-1} (= \rho_{i} \pm 1) \\
      \swap_{i-1}(\rho) & \mbox{if } \rho_{i-1} > \rho_{i+1} > \rho_{i} \mbox{ or } \rho_{i-1} < \rho_{i+1} < \rho_{i}, \\
      \swap_{i}  (\rho) & \mbox{if } \rho_{i+1} > \rho_{i-1} > \rho_{i}  \mbox{ or } \rho_{i+1} < \rho_{i-1} < \rho_{i}, \\
      \rho & \mbox{otherwise},
    \end{array} \right.
  \end{equation}
  where $\swap_j$ denotes a \emph{commutation relation} and $\braid_j$ denotes a \emph{braid relation}. 
  \label{def:dual-stanley}
\end{definition}

We partition $\R(w)$ by stating any reduced words in the same equivalence class under $\{\CK_i\}$ are  \emph{Coxeter-Knuth equivalent}. For example, see Fig.~\ref{fig:CK-dual}.

\begin{figure}[ht]
  \begin{center}
    \begin{tikzpicture}[xscale=2,yscale=1]
      \node at (3,5) (C5) {$(5,3,2,3,4)$};
      \node at (2,4) (B4) {$(3,5,2,3,4)$};
      \node at (4,4) (D4) {$(5,2,3,2,4)$};
      \node at (1,3) (A3) {$(3,2,5,3,4)$};
      \node at (3,3) (C3) {$(2,5,3,2,4)$};
      \node at (5,3) (E3) {$(5,2,3,4,2)$};
      \node at (0,2) (A2) {$(3,2,3,5,4)$};
      \node at (2,2) (C2) {$(2,3,5,2,4)$};
      \node at (4,2) (E2) {$(2,5,3,4,2)$};
      \node at (1,1) (B1) {$(2,3,2,5,4)$};
      \node at (3,1) (D1) {$(2,3,5,4,2)$};
      \draw[thin,color=purple] (C5) -- (D4) node[midway,above] {$\CK_3$};
      \draw[thin,color=purple] (B4) -- (A3) node[midway,below] {$\CK_3$};
      \draw[thin,color=violet] (B4) -- (A3) node[midway,above] {$\CK_4$};
      \draw[thin,color=violet] (D4) -- (C3) node[midway,above] {$\CK_4$};
      \draw[thin,color=blue  ] (D4) -- (E3) node[midway,above] {$\CK_2$};
      \draw[thin,color=blue  ] (A3) -- (A2) node[midway,above] {$\CK_2$};
      \draw[thin,color=blue  ] (C3) -- (E2) node[midway,above] {$\CK_2$};
      \draw[thin,color=violet] (E3) -- (E2) node[midway,above] {$\CK_4$};
      \draw[thin,color=violet] (A2) -- (B1) node[midway,above] {$\CK_4$};
      \draw[thin,color=purple] (C2) -- (B1) node[midway,below] {$\CK_3$};
      \draw[thin,color=blue  ] (C2) -- (B1) node[midway,above] {$\CK_2$};
      \draw[thin,color=purple] (E2) -- (D1) node[midway,above] {$\CK_3 $};
    \end{tikzpicture}
  \caption{\label{fig:CK-dual}The partitioning of $\R(153264)$ into two Coxeter--Knuth equivalence classes.}
  \end{center}
\end{figure}
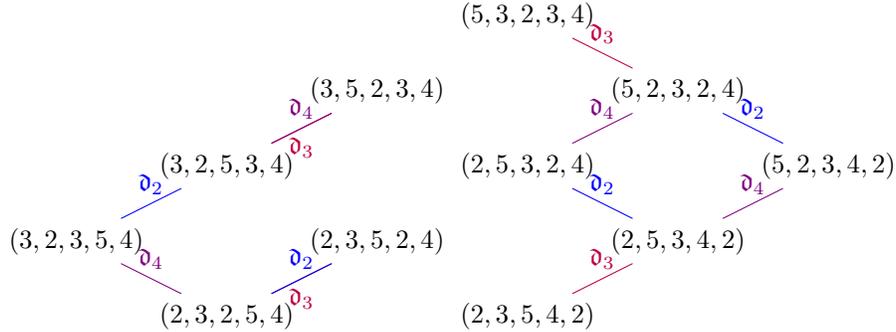

Inverting history, a natural question to ask is whether this partitioning can be realized on the level of symmetric functions by decomposing the Stanley symmetric functions or on the level of polynomials by decomposing Schubert polynomials.

\subsection{Dual equivalence}
\label{sec:CK-dual}

Based on the explicit \emph{elementary dual equivalence involutions} on standard Young tableaux, Assaf \cite{Ass07,Ass15} defined an abstract notion of dual equivalence that can be used to prove that a given fundamental quasisymmetric generating function is symmetric and \emph{Schur positive}.

A \emph{Young diagram} is the set of unit cells in the first quadrant with $\lambda_i$ cells in row $i$ for some partition $\lambda$. A \emph{Young tableau} is a filling of a Young diagram with positive integers. A Young tableau is \emph{increasing} if it has strictly increasing rows (left to right) and columns (bottom to top). A Young tableau is \emph{standard} if it is increasing and uses each integer $1,2,\ldots,n$ exactly once. For example, Figure~\ref{fig:SYT} shows the standard Young tableaux of shape $(3,2)$. 

\begin{figure}[ht]
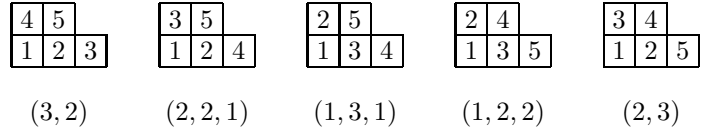

  \begin{displaymath}
    \begin{array}{c@{\hskip 2em}c@{\hskip 2em}c@{\hskip 2em}c@{\hskip 2em}c}
      \tableau{4 & 5 \\ 1 & 2 & 3} & 
      \tableau{3 & 5 \\ 1 & 2 & 4} & 
      \tableau{2 & 5 \\ 1 & 3 & 4} &
      \tableau{2 & 4 \\ 1 & 3 & 5} & 
      \tableau{3 & 4 \\ 1 & 2 & 5} \\ \\
      (3,2) &
      (2,2,1) &
      (1,3,1) &
      (1,2,2) &
      (2,3) 
    \end{array}
  \end{displaymath}
  \caption{\label{fig:SYT}The standard Young tableaux for $\lambda = (3,2)$ and their descent compositions.}
\end{figure}

For a standard Young tableau $T$, say $i$ is a \emph{descent of $T$} if $i+1$ lies weakly left of $i$. The \emph{descent composition of $T$}, denoted by $\Des(T)$, is the strong composition given by maximal lengths of runs of the word $12\cdots n$ crossing no descents. For example, the descent compositions for the tableaux in Figure~\ref{fig:SYT}.

Schur functions may be defined combinatorially as the fundamental quasisymmetric generating functions for standard Young tableaux. This follows from the classical definition (see \cite{Mac95}) by results of Gessel \cite{Ges84}.

\begin{definition}
  For $\lambda$ a partition, the \emph{Schur function} $s_{\lambda}$ is 
  \begin{equation}
    s_{\lambda}(X) = \sum_{T \in \SYT(\lambda)} F_{\Des(T)}(X).
    \label{e:schur-F}
  \end{equation}
  \label{def:schur-F}
\end{definition}

For example, from Figure~\ref{fig:SYT} we have
\begin{displaymath}
  s_{(3,2)}(X) = F_{(2,3)}(X) + F_{(1,2,2)}(X) + F_{(1,3,1)}(X) + F_{(3,2)}(X) + F_{(2,2,1)}(X) .
\end{displaymath}

A \emph{dual equivalence} for a set of objects endowed with a descent statistic is a family of involutions $d_2,\ldots,d_{n-1}$ such that $d_i d_j = d_j d_i$ for $|i-j|\geq 3$ and for which each restricted equivalence classes under $d_i,\ldots,d_j$ for $j-i \leq 3$ is a single Schur function. The main theorem for dual equivalence \cite{Ass15}(Corollary~4.4) states that this local Schur positivity implies global Schur positivity.

\begin{theorem}[\cite{Ass15}]
  The fundamental quasisymmetric generating function of any dual equivalence class is a single Schur function.
\end{theorem}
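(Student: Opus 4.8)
The plan is to reduce the global statement to the local hypothesis built into the definition of dual equivalence. Fix a dual equivalence $\{d_i\}$ on a finite set $\mathcal{A}$ with descent statistic $\Des$, and write $G_{\mathcal{A}}(X) = \sum_{a \in \mathcal{A}} F_{\Des(a)}(X)$ for the fundamental quasisymmetric generating function of a single $d$-class $\mathcal{A}$. First I would observe that, by the defining axioms, each restriction of the $d_i$-action to a window $\{d_i,\dots,d_{i+3}\}$ decomposes $\mathcal{A}$ into pieces whose generating functions are individual Schur functions $s_{\lambda}$; the content of the theorem is that gluing these local pieces forces $G_{\mathcal{A}}$ itself to be a single Schur function.

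The key tool is the comparison with the canonical model: standard Young tableaux of a fixed shape $\lambda$, with the elementary dual equivalence involutions $d_i$ of \cite{Ass07,Ass15}. I would recall (or cite from \cite{Ass15}) that $\SYT(\lambda)$ with these involutions is itself a dual equivalence class, and that its generating function is $s_\lambda$ by Definition~\ref{def:schur-F}. The heart of the argument, following \cite{Ass15}(Corollary~4.4), is a structure theorem: any two dual equivalence classes (in the abstract sense) with the same restricted-class data are isomorphic as $d$-labelled, $\Des$-labelled graphs. Concretely, I would show by induction on $n$ that a connected dual equivalence class on $n$ letters is isomorphic, via a map preserving both the $d_i$ and the descent sets, to $\SYT(\lambda)$ for a unique $\lambda \vdash n$. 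The inductive step restricts attention to $d_2,\dots,d_{n-1}$ (forgetting whether $n$ is a descent), applies the inductive hypothesis to identify the $(n-1)$-letter structure with some $\SYT(\mu)$, and then uses the window axiom for $\{d_{n-3},\dots,d_{n-1}\}$ together with the known local classifications to pin down how the letter $n$ attaches — i.e. which addable cell of $\mu$ it occupies — thereby reconstructing $\lambda$ and the full isomorphism.

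Once the graph isomorphism $\varphi\colon \mathcal{A} \to \SYT(\lambda)$ is established, it preserves descent compositions by construction, so
\begin{equation}
  G_{\mathcal{A}}(X) = \sum_{a \in \mathcal{A}} F_{\Des(a)}(X) = \sum_{T \in \SYT(\lambda)} F_{\Des(T)}(X) = s_{\lambda}(X),
\end{equation}
which is the claim. I would also note the degenerate cases: a class need not a priori be connected under all of $d_2,\dots,d_{n-1}$, but the commutation axiom $d_i d_j = d_j d_i$ for $|i-j| \geq 3$ together with the window axioms forces connectivity, so this is subsumed rather than a separate case.

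The main obstacle is the inductive classification of connected dual equivalence classes — showing that the local (four-index-window) data rigidly determines the global isomorphism type. The subtlety is that one must verify there is \emph{no} extra freedom in attaching the largest letter: the possible local configurations around $n-2, n-1$ in a genuine $\SYT$ are limited, and one must check that the abstract axioms permit exactly those and glue them consistently around the whole class. This is precisely the combinatorial core carried out in \cite{Ass15}, and I would either reproduce that induction or, since the theorem is quoted from that reference, cite it; in the interest of a self-contained treatment here I would sketch the induction and refer to \cite{Ass15}(Section~4) for the verification that the finitely many window types behave as required.
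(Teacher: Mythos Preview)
The paper does not prove this theorem at all: it is stated as a black-box citation of \cite{Ass15}(Corollary~4.4) and used without argument. There is therefore no ``paper's own proof'' to compare against. Your proposal is a reasonable high-level sketch of the argument actually given in \cite{Ass15}, namely reducing to a structure theorem identifying any abstract dual equivalence class with $\SYT(\lambda)$ via a $\Des$-preserving isomorphism, and then invoking Definition~\ref{def:schur-F}. For the purposes of this paper, simply citing the result (as the paper does) is the intended treatment; if you wish to include the sketch, you should flag clearly that the inductive classification of connected classes is the nontrivial content and is deferred to \cite{Ass15}, since your outline does not actually carry out that step.
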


Stanley proved that $\stanley_{w}$ is symmetric \cite{Sta84}(Theorem~2.1) and conjectured that it is, in fact, \emph{Schur positive}, meaning the expansion into Schur functions has only nonnegative coefficients. For example,
\begin{displaymath}
  \stanley_{153264}(X) = s_{(3,2)}(X) + s_{(3,1,1)}(X).
\end{displaymath}

Edelman and Greene \cite{EG87} proved this by generalizing the Robinson--Schensted--Knuth insertion algorithm \cite{Rob38,Sch61,Knu70} on permutations.

\begin{theorem}[\cite{EG87}]
  For $w$ a permutation, we have
  \begin{equation}
    \stanley_{w} = \sum_{\substack{\rho\in\R(w) \\ \exists T_{\rho} \ \text{increasing}, \ \row(T_{\rho})=\rho}} s_{\Des(\rho)} ,
    \label{e:stanley-schur}
  \end{equation}
  where $\row(T)$ is the row reading word (left to right along rows from the top) of $T$. 
  \label{thm:EG}
\end{theorem}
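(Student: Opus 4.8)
The plan is to recover \eqref{e:stanley-schur} by partitioning $\R(w)$ into Coxeter--Knuth equivalence classes and recognizing each class as a dual equivalence class. Since each generator $\CK_i$ acts by a braid or a commutation move, it fixes the underlying permutation, so $\R(w) = \bigsqcup_C C$ is a disjoint union of Coxeter--Knuth classes, and Definition~\ref{def:stanley} gives
\begin{equation*}
  \stanley_w(X) = \sum_{C} \ \sum_{\rho \in C} F_{\Des(\rho)}(X).
\end{equation*}
The crux is to verify that the involutions $\{\CK_i\}_{1<i<\ell(w)}$, paired with the descent statistic $\Des$, form a dual equivalence on $\R(w)$: each $\CK_i$ is an involution by construction; $\CK_i$ and $\CK_j$ commute when $|i-j|\geq 3$ since each $\CK_i$ modifies only the three consecutive entries $\rho_{i-1},\rho_i,\rho_{i+1}$; and every equivalence class under a consecutive block $\CK_i,\ldots,\CK_j$ with $j-i\leq 3$ has fundamental quasisymmetric generating function equal to a single Schur function. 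The last condition is a finite check on a bounded window of consecutive letters of a reduced word, and is exactly what is needed to apply the dual equivalence theorem of \cite{Ass15} quoted above. I expect this verification to be the main obstacle, since it is where the four cases of the Coxeter--Knuth relation in Definition~\ref{def:dual-stanley} must be shown to mesh correctly with $\Des$.

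Granting the dual equivalence, each inner sum equals a single Schur function $s_{\lambda(C)}$ for a partition $\lambda(C)$ depending only on $C$. To identify $\lambda(C)$, I would compare $\sum_{\rho\in C} F_{\Des(\rho)} = \sum_{T\in\SYT(\lambda(C))} F_{\Des(T)}$ and invoke linear independence of the fundamental quasisymmetric functions to obtain an equality of multisets $\{\Des(\rho):\rho\in C\} = \{\Des(T):T\in\SYT(\lambda(C))\}$. A short lemma records that a standard Young tableau has weakly decreasing descent composition exactly when it is the superstandard tableau, whose descent composition is its shape; hence exactly one composition occurring in these multisets is a partition, so $C$ contains a unique reduced word $\rho_C$ with $\Des(\rho_C)$ a partition, and $\lambda(C) = \Des(\rho_C)$.

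Finally I would identify the reduced words with partition-valued descent composition as exactly the row reading words of increasing tableaux. If $\rho = \row(T)$ with $T$ increasing of shape $\mu$, then reading $T$ from the top row down, strictness of the columns forces the last entry of each row to exceed the first entry of the row beneath it, so the rows of $T$ are precisely the maximal increasing runs of $\rho$ and $\Des(\rho) = \mu$. Conversely one must show that each Coxeter--Knuth class contains the reading word of an increasing tableau; together with the uniqueness above this forces $\rho_C = \row(T_{\rho_C})$. This existence statement is the combinatorial heart of Edelman and Greene's argument \cite{EG87}, and is re-derived by the canonical-representative method of Section~\ref{sec:positive-increasing}, so I would cite it or defer to that section rather than reprove it here. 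Combining the steps,
\begin{equation*}
  \stanley_w = \sum_C s_{\lambda(C)} = \sum_C s_{\Des(\rho_C)} = \sum_{\substack{\rho\in\R(w) \\ \exists\, T_\rho \text{ increasing},\ \row(T_\rho)=\rho}} s_{\Des(\rho)} ,
\end{equation*}
which is \eqref{e:stanley-schur}.
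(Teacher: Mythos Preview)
Your overall strategy matches the paper's: partition $\R(w)$ into Coxeter--Knuth classes, invoke dual equivalence (Theorem~\ref{thm:deg-red}) so that each class contributes a single Schur function, and then identify the canonical representative in each class as the reading word of an increasing Young tableau. The paper executes this via the drop machinery of Section~\ref{sec:positive-increasing}, culminating in Theorem~\ref{thm:drop} and Corollary~\ref{cor:drop}.

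The genuine gap is your uniqueness argument. The ``short lemma'' you invoke---that a standard Young tableau has partition-valued descent composition if and only if it is superstandard---is false. For instance, in $\SYT(3,1)$ the tableau with $3$ in the top row and $1,2,4$ in the bottom row has $\Des = (2,2)$, a partition, yet it is not superstandard. Consequently a Coxeter--Knuth class can contain several reduced words $\rho$ with $\Des(\rho)$ a partition, and among these only one will have $\D(\rho)$ column-increasing. Your multiset comparison tells you that $\lambda(C)$ appears exactly once among the descent compositions, but it does not tell you \emph{which} $\rho$ realizes it, nor that this $\rho$ has $\D(\rho)$ increasing. What is true (and what you need) is that exactly one $\rho\in C$ has $\D(\rho)$ an increasing Young tableau; this is precisely Theorem~\ref{thm:drop}, proved via the nil-Hecke relations for the $\drop_i$ maps. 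Since you already defer existence to Section~\ref{sec:positive-increasing}, you should defer uniqueness there as well and drop the flawed lemma entirely; the argument then collapses to the paper's own proof.
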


For example, from Fig.~\ref{fig:coxeter}, the two reduced words for $153264$ that are the row reading words of increasing tableaux are $(3,5 \mid 2,3,4)$ and $(5 \mid 3 \mid 2,3,4)$, corresponding to the Schur expansion of $\stanley_{153264}$ given above.

The Edelman--Greene correspondence is an elegant solution to the Schur positivity conjecture, but the arguments involved in the proof require intricate analysis of bumping paths with many separate cases. Thus one can hope to find a simpler proof that avoids much of this subtlety.

Edelman and Greene \cite{EG87}(Corollary~6.15) relate Coxeter--Knuth equivalence with dual equivalence through the Edelman--Greene recording tableaux. Implicit in their work and explicit in \cite{Ass-W}(Theorem~2.10), the Coxeter--Knuth involutions give a dual equivalence on reduced words.

\begin{theorem}[\cite{Ass-W}]
  The Coxeter--Knuth involutions $\{\CK_i\}$ give a dual equivalence for $\R(w)$, thus Stanley symmetric functions are symmetric and Schur positive.
  \label{thm:deg-red}
\end{theorem}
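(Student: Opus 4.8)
The statement to prove is that the Coxeter–Knuth involutions $\{\CK_i\}$ constitute a dual equivalence on $\R(w)$ in the sense defined just above Theorem~\ref{thm:EG}, and that Schur positivity of $\stanley_w$ follows. The proof has two logically separate pieces. First, one must verify the abstract axioms of a dual equivalence: each $\CK_i$ is an involution (clear from Definition~\ref{def:dual-stanley}, since both $\swap_j$ and $\braid_j$ are involutions), the far-commutation relation $\CK_i\CK_j=\CK_j\CK_i$ for $|i-j|\ge 3$, and — the substantive axiom — that every equivalence class under any consecutive block $\CK_i,\ldots,\CK_j$ with $j-i\le 3$ has fundamental quasisymmetric generating function equal to a single Schur function. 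Second, one invokes Theorem~\ref{thm:deg-red}'s cited source \cite{Ass15} (the main theorem for dual equivalence stated in the excerpt) to promote this local statement to the global claim that $\stanley_w=\sum_{\rho\in\R(w)}F_{\Des(\rho)}$ is Schur positive.

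**Key steps, in order.** (1) Record that each $\CK_i$ preserves $\R(w)$: each clause of Definition~\ref{def:dual-stanley} applies either a braid or a commutation move, and these are exactly the Coxeter relations, so the underlying permutation is unchanged; involutivity is immediate. (2) Establish the interaction of $\CK_i$ with the descent composition $\Des$: one checks, case by case on the relative order of $\rho_{i-1},\rho_i,\rho_{i+1}$ (and, for the far-commutation check, of a second triple at positions $j-1,j,j+1$), exactly how the run decomposition of $\rho$ changes. The crucial combinatorial fact to extract here is that $\CK_i$ behaves on $\Des$-descents precisely the way the elementary dual equivalence involution $d_i$ behaves on descents of standard Young tableaux: $i$ is a "descent of $\rho$" (in the sense that $\rho_i$ starts a new run or, equivalently, the $i$-th and $(i{+}1)$-st letters straddle a run break counted from the appropriate end) in a pattern matched by $d_i$, and $\CK_i$ toggles membership of $i$ in $\{i-1,i,i+1\}\cap\Des$ in the same way. (3) For $j-i\le 3$, enumerate the finitely many "local shapes" of the relevant window of letters and check by direct computation (or by transporting the check through the Edelman–Greene correspondence of \cite{EG87}, via their Corollary~6.15 relating Coxeter–Knuth classes to dual equivalence classes of recording tableaux) that the restricted class is a single standard-Young-tableau dual equivalence class on at most a handful of cells, whose $F$-generating function is a single Schur function by Definition~\ref{def:schur-F}. (4) Apply \cite{Ass15}(Corollary~4.4): a family of descent-preserving involutions satisfying the above axioms forces the total $F$-generating function of each $\{\CK_i\}$-class to be Schur positive; summing over all classes in $\R(w)$ and using \eqref{e:stanley} gives Schur positivity of $\stanley_w$, and in fact exhibits the Schur expansion as a sum over classes.

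**Main obstacle.** The genuine work is step (2)–(3): one must show that $\CK_i$ really does match the standard elementary dual equivalence involution under the descent dictionary, uniformly across the four cases of Definition~\ref{def:dual-stanley} and their reflections, and then that the $j-i\le3$ restricted classes are individually single Schur functions. This is where the braid case $\rho_{i+1}=\rho_{i-1}$ is delicate, because a braid move is not a Knuth-type move on the letters but does act correctly on the run structure; handling it requires care that $\Des$ is genuinely preserved when it should be and correctly modified otherwise. Since the excerpt explicitly attributes the result to \cite{EG87}(Corollary~6.15) and \cite{Ass-W}(Theorem~2.10), the cleanest route is to cite the Edelman–Greene bijection to identify Coxeter–Knuth classes with dual equivalence classes of the recording tableaux and then quote the dual equivalence property of elementary dual equivalence on standard Young tableaux, reducing the new verification to bookkeeping about how $\Des(\rho)$ transfers across the correspondence; the alternative self-contained case analysis is elementary but tedious.
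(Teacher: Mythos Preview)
The paper does not supply a proof of this theorem at all: it is stated as a cited result from \cite{Ass-W}(Theorem~2.10), with the preceding sentence noting that the statement is ``implicit in'' \cite{EG87}(Corollary~6.15) and ``explicit in'' \cite{Ass-W}. There is therefore no paper-proof to compare your proposal against.

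That said, your outline is a reasonable sketch of how the argument actually goes in those references, and you correctly identify the two available routes. The route you flag as ``cleanest'' --- transport the problem through the Edelman--Greene correspondence so that $\CK_i$ on reduced words becomes $d_{n-i+1}$ on recording tableaux (this is exactly the content of \cite{EG87}(Corollary~6.15), restated later in the paper as Theorem~\ref{thm:dual-CK}), and then quote that the $d_i$ on $\SYT(\lambda)$ are the prototypical dual equivalence --- is indeed the one the paper's surrounding text points to. Your alternative direct-verification route (steps (2)--(3)) is also valid but, as you note, tedious; one minor caution is that your description of ``$i$ is a descent of $\rho$'' does not quite match the paper's conventions (Definition~\ref{def:Des} works with the run decomposition and reverses the lengths), so the bookkeeping of how $\CK_i$ interacts with $\Des$ requires a bit more care with indexing than your sketch suggests. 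The far-commutation axiom $\CK_i\CK_j=\CK_j\CK_i$ for $|i-j|\ge 3$ is immediate since the two involutions touch disjoint windows of letters, as you say.
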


That is, the Coxeter--Knuth relations $\CK_i$ partition reduced words for a given permutation into dual equivalence classes, each of which has fundamental quasisymmetric generating function equal to a single Schur function. However, while the proof of Theorem~\ref{thm:deg-red} is simple, the resulting formula requires computing each equivalence class in its entirety, falling short of the explicit formula in Theorem~\ref{thm:EG}.

\subsection{Weak dual equivalence}
\label{sec:CK-weak}

The \emph{Demazure characters}, introduced by Demazure \cite{Dem74}, originally arose as characters of Demazure modules for the general linear group \cite{Dem74a}. These polynomials were studied combinatorially by Lascoux and Sch{\"u}tzenberger \cite{LS90}, who call them \emph{standard bases}, and more extensively by Reiner and Shimozono \cite{RS95} who call them \emph{key polynomials}. We use the key tableaux model \cite{Ass-W} based on ideas of Kohnert \cite{Koh91} developed further by Assaf and Searles \cite{AS18}.

A \emph{key diagram} is a collection of left-justified unit cells in the right half place with $a_i$ cells in row $i$ for some weak composition $a$. A \emph{key tableau} is a filling of a key diagram with positive integers. The definition for \emph{standard key tableaux} \cite{Ass-W}(Definition~3.10) is more subtle than for standard Young tableaux.

\begin{definition}[\cite{Ass-W}]
  A \emph{standard key tableau} is a bijective filling of a key diagram with $\{1,2,\ldots,n\}$ such that rows decrease (left to right) and if some entry $i$ is above and in the same column as an entry $k$ with $i<k$, then there is an entry right of $k$, say $j$, such that $i<j$. 
  \label{def:key-tab}
\end{definition}

\begin{figure}[ht]
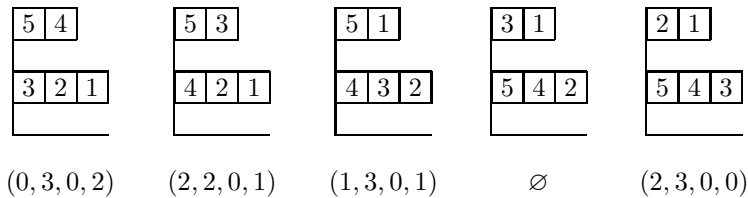

  \begin{displaymath}
    \begin{array}{c@{\hskip 2em}c@{\hskip 2em}c@{\hskip 2em}c@{\hskip 2em}c}
      \vline\tableau{5 & 4 \\ \\ 3 & 2 & 1 \\ \\\hline} &
      \vline\tableau{5 & 3 \\ \\ 4 & 2 & 1 \\ \\\hline} &
      \vline\tableau{5 & 1 \\ \\ 4 & 3 & 2 \\ \\\hline} &
      \vline\tableau{3 & 1 \\ \\ 5 & 4 & 2 \\ \\\hline} &
      \vline\tableau{2 & 1 \\ \\ 5 & 4 & 3 \\ \\\hline} \\ \\
      (0,3,0,2) & (2,2,0,1) & (1,3,0,1) & \varnothing & (2,3,0,0) 
    \end{array}
  \end{displaymath}
  \caption{\label{fig:SKT}Standard key tableaux of shape $(0,3,0,2)$ and their weak descent compositions.}
\end{figure}

For a standard key tableau $T$, say $i$ is a \emph{descent of $T$} if $i+1$ lies weakly right of $i$ in $T$. Note that this is the reverse of the concept of descents for standard Young tableaux. Next we define a \emph{weak descent composition} \cite{Ass-W}(Definition~3.12). 

\begin{definition}[\cite{Ass-W}]
  For a standard key tableau $T$, define the \emph{weak descent composition of $T$}, denoted by $\des(T)$, as follows. Let $(\tau^{(k)} | \cdots | \tau^{(1)})$ be the run decomposition of $n \cdots 2 1$ based on descents of $T$, that is, each $\tau^{(i)}$ has no descents between adjacent letters and is as long as possible. Set $t_i = \min(\tau^{(i)})$ for $i=1,\ldots,k$. Set $\hat{t}_k = r_k$, and for $i<k$, set $\hat{t}_i = \min(r_i,\hat{t}_{i+1}-1)$. If $t_1 \leq 0$, then define $\des(\rho) = \varnothing$; otherwise, set the $\hat{t}_i$th part of $\des(T)$ to be $\des(T)_{\hat{t}_i} = |\tau^{(i)}|$ and set all other parts to $0$.
  \label{def:des-key}
\end{definition}

For example, the standard key tableaux of shape $(0,3,0,2)$ shown in Figure~\ref{fig:SKT} have weak descent compositions given beneath.

Using this notion, we have the following reformulation of Demazure characters given in \cite{Ass-W}(Corollary~3.16) that we take as our definition.

\begin{definition}
  Given a weak composition $a$, we have
  \begin{equation}
    \key_a = \sum_{T \in \SKT(a)} \fund_{\des(T)} .
    \label{e:key-fund}
  \end{equation}
  \label{def:key}
\end{definition}

For example, from Figure~\ref{fig:SKT} we compute
\begin{displaymath}
  \key_{(0,3,0,2)} = \fund_{(0,3,0,2)} + \fund_{(2,2,0,1)} + \fund_{(1,3,0,1)} + \fund_{(2,3,0,0)}.
\end{displaymath}

Implicit in the work of Lascoux and Sch{\"u}tzenberger \cite{LS90} and explicit in that of Assaf and Searles \cite{AS18}(Corollary~4.9), we have the following analog of Proposition~\ref{prop:stanley-stable} for Demazure characters.

\begin{proposition}[\cite{AS18}]
  For a weak composition $a$, the \emph{Demazure character} is
  \begin{equation}
    \lim_{m \rightarrow \infty} \key_{0^m \times a}(x_1,\ldots,x_m,0,\ldots,0) = s_{\mathrm{sort}(a)}(X),
  \end{equation}
  where $\mathrm{sort}(a)$ is the partition obtained by sorting $a$ into weakly decreasing order.
  \label{prop:key-stable}
\end{proposition}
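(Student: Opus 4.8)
The plan is to deduce Proposition~\ref{prop:key-stable} from the combinatorial definition of $\key_a$ in Definition~\ref{def:key} together with the stability of fundamental slide polynomials in Proposition~\ref{prop:fund-stable} and the combinatorial definition of Schur functions in Definition~\ref{def:schur-F}. The main work is to produce a weight-preserving bijection between $\SKT(0^m \times a)$ (for $m$ large) and $\SYT(\mathrm{sort}(a))$ that intertwines the two notions of weak descent composition $\des(\cdot)$ and descent composition $\Des(\cdot)$ in the appropriate limiting sense.

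First I would make the reduction precise. Applying Definition~\ref{def:key} to $0^m \times a$ gives $\key_{0^m \times a} = \sum_{T \in \SKT(0^m \times a)} \fund_{\des(T)}$, where $\des(T)$ is now a weak composition with its first $m$ parts (and possibly more) equal to zero. By Proposition~\ref{prop:fund-stable}, for $m$ large enough each surviving term $\fund_{\des(T)}$ stabilizes to $F_{\flatten(\des(T))}$, and the virtual tableaux (those with $\des(T) = \varnothing$, contributing $\fund_\varnothing = 0$) must be shown to be exactly those that do not survive the limit — this is where the shape having $m$ empty rows at the bottom matters, since prepending zeros to the shape pushes the indices $\hat{t}_i$ up and eventually makes $t_1 > 0$ automatic. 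So the limit equals $\sum_{T} F_{\flatten(\des(T))}$, summed over non-virtual $T \in \SKT(0^m \times a)$ for $m \gg 0$; and this set stabilizes. It then remains to exhibit a bijection $\varphi\colon \{\text{eventually non-virtual } T \in \SKT(0^m\times a)\} \to \SYT(\mathrm{sort}(a))$ with $\flatten(\des(T)) = \Des(\varphi(T))$, after which Definition~\ref{def:schur-F} finishes the argument.

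The heart of the matter is constructing $\varphi$. The natural candidate: given a standard key tableau $T$, left-justify nothing new but rather record, for each value $j \in \{1,\dots,n\}$, the row of $T$ containing $j$; sorting the nonempty rows of $T$ by their minimum entries (equivalently, reading the row-decreasing structure) recovers a standard Young tableau of the partition shape $\mathrm{sort}(a)$ whose columns are the columns of $T$ rearranged — essentially the "column-sorting" map that is known to biject standard key tableaux of shape $a$ with standard Young tableaux of shape $\mathrm{sort}(a)$. Concretely, one can cite or re-derive that the rows of a standard key tableau, reordered by decreasing length (breaking ties appropriately), form the rows of a standard Young tableau; the subtle column condition in Definition~\ref{def:key-tab} is precisely what guarantees column-strictness after sorting. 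The descent matching is then the key check: $i$ is a descent of the standard key tableau $T$ when $i+1$ lies weakly right of $i$, which under column-sorting translates to $i+1$ lying weakly left of $i$ in $\varphi(T)$ — the reversed convention noted in the text right before Definition~\ref{def:des-key} — so the run decomposition of $n\cdots 21$ used for $\des(T)$ matches the run decomposition of $12\cdots n$ used for $\Des(\varphi(T))$, giving $\flatten(\des(T)) = \Des(\varphi(T))$.

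The step I expect to be the main obstacle is verifying that $\varphi$ is well-defined and bijective, i.e.\ that column-sorting a standard key tableau yields a genuine standard Young tableau of shape $\mathrm{sort}(a)$ and that this is reversible. The shape statement is easy, but checking that columns become strictly increasing bottom-to-top after sorting rows requires carefully exploiting the implication in Definition~\ref{def:key-tab} (if $i$ sits above $k$ in a column with $i<k$ then some $j>i$ lies right of $k$); one must show this forces the sorted arrangement to have no column violations, and conversely that every standard Young tableau arises uniquely. I would handle this by induction on $n$, removing the cell containing $n$ and tracking how its removal interacts with the row-length ordering — this is the one place genuine case analysis is unavoidable, though it is considerably lighter than the bumping-path analysis in Edelman--Greene. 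Once $\varphi$ is established, the descent-compatibility and the limit computation are routine.
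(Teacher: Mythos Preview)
The paper does not give its own proof of Proposition~\ref{prop:key-stable}; the result is quoted from \cite{AS18} (Corollary~4.9) and used as a black box, so there is no argument in the paper to compare against.

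Your overall strategy---expand $\key_{0^m\times a}$ over $\SKT(0^m\times a)$, use Proposition~\ref{prop:fund-stable} to pass to fundamental quasisymmetric functions, and then biject to $\SYT(\mathrm{sort}(a))$ preserving descent data---is the right shape, and the first two steps go through as you describe. The gap is in your bijection $\varphi$ and its claimed descent compatibility. You assert that under row-sorting (what you call the ``column-sorting'' map), the condition ``$i+1$ lies weakly right of $i$'' becomes ``$i+1$ lies weakly left of $i$''. This is false: permuting the rows of a tableau does not change the column of any entry, so the relative horizontal positions of $i$ and $i+1$ are preserved, not reversed. Moreover, since rows of a standard key tableau strictly decrease while rows of a standard Young tableau strictly increase, a pure row permutation cannot land in $\SYT$ at all.

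The bijection that actually works is the map $\Phi\colon\SKT(a)\to\SYT(\mathrm{sort}(a))$ the paper itself invokes later in the proof of Theorem~\ref{thm:wdual-CK} (citing \cite{Ass-W}, Theorem~3.24): replace each entry $i$ by $n-i+1$ and then sort columns to increase upward. Under $\Phi$, a key-tableau descent at $i$ corresponds to a Young-tableau descent at $n-i$, so one obtains $\Des(\Phi(T))$ equal to the \emph{reverse} of $\flatten(\des(T))$, not $\flatten(\des(T))$ itself. To finish your argument you then need the additional (standard) fact that $\sum_{S\in\SYT(\lambda)} F_{\mathrm{rev}(\Des(S))}=s_\lambda$, which follows from the Sch\"utzenberger evacuation involution on $\SYT(\lambda)$; alternatively, compose $\Phi$ with evacuation to obtain a genuinely descent-preserving bijection. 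Either way, the missing ingredient is the complementation $i\mapsto n-i+1$, without which neither the target shape nor the descent correspondence comes out right.
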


Generalizing dual equivalence, a \emph{weak dual equivalence} is a family of involutions $\wD_2,\ldots,\wD_{n-1}$ that give a dual equivalence when weak descent compositions are flattened to descent compositions by removing parts equal to $0$ and for which the fundamental slide generating polynomial of each restricted equivalence class under $\wD_i,\ldots,\wD_j$ for $j-i \leq 3$ is a single Demazure character. Parallel to the symmetric case, the main theorem for weak dual equivalence \cite{Ass-W}(Theorem~3.33) states that this local Demazure positivity implies global Demazure positivity.

\begin{theorem}[\cite{Ass-W}]
  The fundamental slide generating polynomial of any weak dual equivalence class is a single Demazure character.
\end{theorem}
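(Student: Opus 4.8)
The plan is to run the same local-to-global argument that establishes the symmetric dual equivalence theorem, but to carry the finer polynomial information alongside the coarser symmetric statement at every step. Fix a weak dual equivalence class $C$, say on objects whose entries are $\{1,\dots,n\}$, and write $P_C=\sum_{x\in C}\fund_{\des(x)}$ for its fundamental slide generating polynomial; the goal is to produce a weak composition $a$ with $P_C=\key_a$.

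First I would pass to the symmetric shadow. By the defining axiom, flattening each weak descent composition $\des(x)$ to the strong composition $\flatten(\des(x))$ makes $C$ a dual equivalence class in the sense of Section~\ref{sec:CK-dual}, so the main theorem for dual equivalence yields a partition $\lambda$ with $\sum_{x\in C}F_{\flatten(\des(x))}=s_\lambda$. Applying Proposition~\ref{prop:fund-stable} termwise, this Schur function is also the stable limit of $P_C$ obtained by prepending empty rows to each $\des(x)$, so by Proposition~\ref{prop:key-stable} the only Demazure characters $P_C$ could equal are the $\key_a$ with $\mathrm{sort}(a)=\lambda$. The real value of this step, however, is the stronger fact that the \emph{multiset} $\{\flatten(\des(x)):x\in C\}$ equals $\{\flatten(\des(T)):T\in\SYT(\lambda)\}$, which will constrain the finer structure later.

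Next I would identify $a$ and induct on $n$. Among the rearrangements of $\lambda$, the correct $a$ is read off from a canonical vertex of $C$: just as $\SKT(a)$ contains a unique standard key tableau whose weak descent composition is $a$ — so that $\fund_a$ is always a term of $\key_a$ — the class $C$ should contain a unique element $x_0$ that is extremal for the $\wD_i$ in the matching sense, and I would set $a:=\des(x_0)$. To prove $P_C=\key_a$, restrict to the involutions $\wD_2,\dots,\wD_{n-2}$, those that leave the cell of the largest entry untouched; this decomposes $C$ into smaller weak dual equivalence classes, each identified — through the operation on standard key tableaux that deletes the largest entry — with a weak dual equivalence class on $n-1$ letters, so the inductive hypothesis writes each piece as a single Demazure character. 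The base of the induction, where only four consecutive indices are active, is precisely the local Demazure positivity hypothesis. It then remains to reassemble the pieces.

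The reassembly is where I expect the real difficulty. In the symmetric setting the analogous step is routine because Schur functions satisfy a clean branching rule under removal of the largest entry; Demazure characters satisfy no such rule, so there is no formal reason the sum of the pieces should collapse to a single $\key_a$ rather than to a sum of several. Carrying it out requires matching the explicit recursion for $\SKT(a)$ coming from deleting the largest entry against the decomposition of $C$ under $\wD_2,\dots,\wD_{n-2}$, and tracking carefully how the weak descent compositions of the pieces determine, and are determined by, the shape $a$ — here the multiset identity from the first step does the essential work, leaving exactly enough rigidity to force the pieces to fit together as $\key_a$ and nothing else. This is the step at which the weak theory genuinely diverges from, and is harder than, its symmetric model.
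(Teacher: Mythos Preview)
The paper does not prove this theorem; it quotes it from \cite{Ass-W} (Theorem~3.33 there) and uses it as a black box, so there is no in-paper argument to compare your proposal against.

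Evaluating your proposal on its own merits: the overall scaffold --- flatten to recover the underlying partition $\lambda$, pin down $a$ via a distinguished element, then run a local-to-global induction on the window of active involutions --- is a plausible shape for such an argument. But you have correctly located, and then not filled, the essential gap. Two concrete issues. First, your inductive step and reassembly tacitly conflate the abstract setting with the concrete $\SKT$ model: a weak dual equivalence is a set equipped with involutions $\wD_i$ and a statistic $\des$, not a set of tableaux, so phrases like ``leave the cell of the largest entry untouched'' and ``the operation on standard key tableaux that deletes the largest entry'' have no literal meaning on $C$. What one must actually do is build a $\des$-preserving bijection $C\to\SKT(a)$ and show it intertwines each $\wD_i$; the local axiom (restricted classes under $\wD_i,\ldots,\wD_j$ with $j-i\le 3$ are single Demazure characters) is exactly the leverage needed to extend such a bijection window by window, but you have not articulated that mechanism. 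Second, your reassembly paragraph is a description of what would need to be true, not an argument: the claim that the multiset identity from step (1) ``leaves exactly enough rigidity to force the pieces to fit together as $\key_a$'' is precisely the content of the theorem, and you give no reason why the inductively obtained Demazure pieces cannot sum to a nontrivial nonnegative combination of several $\key_b$'s with $\mathrm{sort}(b)=\lambda$. That is where a proof must do work, and your proposal stops short of it.
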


One might now anticipate that Schubert polynomials expand nonnegatively into Demazure characters, parallel to \eqref{e:stanley-schur}, and indeed, we have, 
\begin{displaymath}
  \schubert_{153264} = \key_{(0,3,1,0,1)} + \key_{(0,3,2,0,0)} .
\end{displaymath}

Lascoux and Sch{\"u}tzenberger \cite{LS90} give a formula for the key polynomial expansion of a Schubert polynomial as a sum over increasing Young tableau whose row reading word is a reduced word for $w$, where for each such $\rho$ one computes the \emph{left nil key} by considering all reduced words of $w$ that are Coxeter--Knuth equivalent to $\rho$. For details that fill the gaps in \cite{LS90}, see \cite{RS95}(Theorem~4). 

\begin{theorem}[\cite{LS90,RS95}]
  For $w$ a permutation, we have
  \begin{equation}
    \schubert_{w} = \sum_{\substack{\rho\in\R(w) \\ \exists T_{\rho} \ \text{increasing}, \ \row(T_{\rho})=\rho}} \key_{\mathrm{content}(K^0_{-}(\rho))},
    \label{e:nilkey}
  \end{equation}
  where $K^0_{-}(\rho)$ is the left nil key of $\rho$. 
  \label{thm:LS}
\end{theorem}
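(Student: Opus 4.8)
The plan is to derive this as the polynomial shadow of the weak dual equivalence machinery, exactly as Theorem~\ref{thm:deg-red} gives the Schur positivity of Stanley symmetric functions, and then to graft on the Edelman--Greene parametrization of Coxeter--Knuth classes together with the Reiner--Shimozono identification of the Demazure indices with left nil keys.

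First I would use that the Coxeter--Knuth involutions $\{\CK_i\}$ form a weak dual equivalence on $\R(w)$ (the polynomial counterpart of Theorem~\ref{thm:deg-red}), so that the equivalence classes of $\R(w)$ under $\{\CK_i\}$ are weak dual equivalence classes. By Definition~\ref{def:schubert} we may write $\schubert_w = \sum_{\rho \in \R(w)} \fund_{\des(\rho)}$ and group the sum by Coxeter--Knuth class $C$, giving $\schubert_w = \sum_C\bigl(\sum_{\rho\in C}\fund_{\des(\rho)}\bigr)$. The main theorem on weak dual equivalence \cite{Ass-W} then identifies each inner sum with a single Demazure character $\key_{a(C)}$, where the weak composition $a(C)$ is well defined because the fundamental slide polynomials form a basis of the polynomial ring. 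This already produces $\schubert_w = \sum_C \key_{a(C)}$, hence Demazure positivity; moreover the summand $\fund_{a(C)}$ occurs in $\key_{a(C)}$ (it comes from the superstandard key tableau), which singles out $a(C)$ via the leading monomial $x^{a(C)}$.

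Next I would reindex the classes. By Edelman--Greene \cite{EG87}, two reduced words are Coxeter--Knuth equivalent exactly when they have the same insertion tableau, which is an increasing tableau $T$, and the row reading word $\row(T)$ is itself a reduced word lying in that class; since $P_{EG}(\rho)=P_{EG}(\rho')$ forces $\rho=\rho'$ when both are reading words of increasing tableaux, the map $C \mapsto \row(P_{EG}(C))$ is a bijection from Coxeter--Knuth classes onto the set $\{\rho\in\R(w) : \exists\, T_\rho \text{ increasing},\ \row(T_\rho)=\rho\}$ appearing in \eqref{e:nilkey}. Substituting this reindexing gives $\schubert_w = \sum_{\rho} \key_{a(C(\rho))}$ with the sum over precisely these reading words. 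It then remains to show $a(C(\rho)) = \mathrm{content}(K^0_-(\rho))$, which I would obtain from the analysis of Lascoux--Sch\"utzenberger \cite{LS90} completed by Reiner--Shimozono \cite{RS95}: the left nil key $K^0_-(\rho)$ is the key whose successive columns record the row sets swept out by the Coxeter--Knuth (nil-plactic) class under the relevant elementary operations, and $\mathrm{content}(K^0_-(\rho))$ is by \cite{RS95}(Theorem~4) the weak composition whose associated superstandard key tableau both realizes the leading term of the Demazure character attached to $C(\rho)$ and has a reduced word in $C(\rho)$; uniqueness of the Demazure index forces $a(C(\rho)) = \mathrm{content}(K^0_-(\rho))$.

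I expect the last step to be the main obstacle. The weak dual equivalence theorem delivers the Demazure characters only abstractly, one per Coxeter--Knuth class, and pinning the index down as the left nil key content requires the delicate frank-word and jeu-de-taquin analysis of \cite{RS95}; everything before it is a formal combination of Definition~\ref{def:schubert}, the weak dual equivalence theorem, and the Edelman--Greene bijection. Providing a transparent, formula-level substitute for exactly this identification of $a(C)$ is, of course, the motivation for the later sections of the paper, so here I would be content to cite \cite{LS90,RS95} for it.
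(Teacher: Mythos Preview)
Your proposal is correct and matches the paper's treatment essentially exactly. The paper does not give a self-contained proof of Theorem~\ref{thm:LS}; it is stated as a cited result from \cite{LS90,RS95}, and the surrounding text only remarks that ``a simplified proof comes as an immediate application of weak dual equivalence'' via Theorem~\ref{thm:wdeg-red}, which is precisely your step of grouping the fundamental slide expansion by Coxeter--Knuth classes and invoking the weak dual equivalence theorem to get one $\key_{a(C)}$ per class. Your Edelman--Greene reindexing and your explicit acknowledgment that the identification $a(C)=\mathrm{content}(K^0_-(\rho))$ must still be imported from \cite{LS90,RS95} are exactly the points the paper leaves implicit when it says ``this gives a simplified proof of Theorem~\ref{thm:LS}, though the resulting formula is no more tractable.''
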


While theoretically interesting for the nonnegativity, this result does not provide a direct formula as one is required to compute each Coxeter--Knuth class, and so the computation is effectively equivalent to computing the fundamental slide expansion. A simplified proof comes as an immediate application of weak dual equivalence.

\begin{theorem}[\cite{Ass-W}]
  The Coxeter--Knuth involutions $\{\CK_i\}$ give a weak dual equivalence for $\R(w)$. In particular, Schubert polynomials are Demazure positive.
  \label{thm:wdeg-red}
\end{theorem}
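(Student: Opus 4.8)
The plan is to verify the two defining axioms of a weak dual equivalence (in the sense spelled out just before Theorem~\ref{thm:wdeg-red}) for the family $\{\CK_i\}$ acting on $\R(w)$: first, that flattening weak descent compositions to descent compositions recovers a genuine dual equivalence, and second, that each restricted equivalence class under $\CK_i,\ldots,\CK_j$ with $j-i\le 3$ has fundamental slide generating polynomial equal to a single Demazure character. Once both are checked, Theorem~\ref{thm:wdeg-red} follows formally from the main theorem for weak dual equivalence \cite{Ass-W}, and the Demazure positivity of $\schubert_w$ is then immediate from the fundamental slide expansion in Definition~\ref{def:schubert} together with \eqref{e:key-fund}.

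For the first axiom, I would invoke Theorem~\ref{thm:deg-red}: we already know $\{\CK_i\}$ is a dual equivalence for $\R(w)$ with respect to the (flattened) descent composition $\Des(\rho)$. So this half reduces to checking compatibility of the two descent statistics, i.e. that $\flatten(\des(\rho)) = \Des(\rho)$ for every non-virtual $\rho$, and that $\CK_i$ interacts with $\des$ in the way required by the weak axioms (the runs and their relative order are preserved in the right way under a single $\CK_i$). This is essentially bookkeeping comparing Definition~\ref{def:Des} with Definition~\ref{def:des-red}: both are built from the run decomposition $(\rho^{(k)}\mid\cdots\mid\rho^{(1)})$, and $\des$ merely records the run lengths in the slots $\hat r_i$ rather than consecutively, so flattening collapses the zeros and returns $\Des(\rho)$. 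The only subtlety is handling virtual words, which contribute $\fund_\varnothing = 0$ and so may be discarded on both sides.

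The real content is the second axiom: for each interval $[i,j]$ of length at most $3$, every $\{\CK_i,\ldots,\CK_j\}$-equivalence class $C\subseteq\R(w)$ must satisfy $\sum_{\rho\in C}\fund_{\des(\rho)} = \key_a$ for a single weak composition $a$. The natural route is to transport the problem to standard key tableaux: one wants a weight- and descent-preserving bijection between a restricted Coxeter--Knuth class and $\SKT(a)$, after which \eqref{e:key-fund} finishes the job. Concretely, I would classify the small restricted classes — these are governed by configurations of at most five consecutive letters of a reduced word — and check directly that the multiset of weak descent compositions arising matches $\{\des(T): T\in\SKT(a)\}$ for the appropriate $a$; the enumeration of cases should mirror the analysis of $\SKT$ of small shapes (cf. Figure~\ref{fig:SKT}). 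Here one must be careful that the involutions $\CK_{i-1},\CK_i,\CK_{i+1}$ can shift a run across a $\hat r$-slot in a way that changes which coordinate of $\des(\rho)$ is occupied without changing the flattened composition; verifying these shifts agree exactly with the descent-edge structure on $\SKT(a)$ is where the argument has teeth.

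I expect the main obstacle to be precisely this local matching of weak descent compositions: proving that the $\hat r_i$ bookkeeping in Definition~\ref{def:des-red} transforms under the Coxeter--Knuth involutions in lockstep with the descent combinatorics of standard key tableaux in Definition~\ref{def:key-tab}. The dual-equivalence skeleton is inherited for free from Theorem~\ref{thm:deg-red}, but the ``weak'' refinement — tracking the actual weak composition, not just its flattening — requires a genuinely new case analysis, and the virtual words (those with a run whose minimum is pushed to a non-positive index) must be shown to be exactly the ones that do not correspond to any standard key tableau. Assembling these local verifications and then citing the weak dual equivalence meta-theorem \cite{Ass-W} gives the result.
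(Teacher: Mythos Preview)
The paper does not give its own proof of this theorem: it is stated with the citation \cite{Ass-W} and no proof environment follows. The surrounding text simply records the consequence (that Coxeter--Knuth classes are weak dual equivalence classes) and moves on. So there is no in-paper argument to compare against.

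That said, your outline is the correct and natural one --- verify the two axioms of a weak dual equivalence directly and then invoke the meta-theorem --- and it is essentially the strategy carried out in the cited source. Your identification of the real work is accurate: the flattening compatibility $\flatten(\des(\rho)) = \Des(\rho)$ is immediate from Definitions~\ref{def:Des} and~\ref{def:des-red}, so the first axiom is inherited from Theorem~\ref{thm:deg-red}, while the second axiom requires a genuine local analysis matching the $\des$-multisets of small $\{\CK_i,\ldots,\CK_j\}$-classes against $\{\des(T):T\in\SKT(a)\}$. One small caution on your framing of virtual words: they are not ``discarded on both sides'' independently --- a $\{\CK_i,\ldots,\CK_j\}$-class may contain both virtual and non-virtual words, and the local axiom must account for this (the virtual ones contribute $\fund_\varnothing=0$, mirroring the virtual standard key tableaux in $\SKT(a)$ that likewise contribute zero). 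Otherwise your plan is sound.
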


That is, the Coxeter--Knuth relations $\CK_i$ partition reduced words for a given permutation into \emph{weak dual equivalence classes}, each of which has fundamental slide generating polynomial equal to a single Demazure character. This gives a simplified proof of Theorem~\ref{thm:LS}, though the resulting formula is no more tractable.

%
\section{Positive expansions}
%
\label{sec:positive}

By Theorem~\ref{thm:deg-red}, each Coxeter--Knuth equivalence class corresponds to a term in the Schur expansion of a Stanley symmetric function. Similarly, by Theorem~\ref{thm:wdeg-red}, each Coxeter--Knuth equivalence class corresponds to a term in the Demazure expansion of a Schubert polynomial. To make these positivity results more compelling, we wish to have canonical representatives from each Coxeter--Knuth equivalence class from which an exact formula can be easily computed.

Edelman and Greene \cite{EG87} resolved this for the Schur expansion of Stanley symmetric functions, but we wish to give a simple, self-contained proof of their formula that avoids the subtleties of their insertion algorithms. The end result will be the same, however, namely that each Coxeter--Knuth equivalence class contains a unique reduced word whose descent tableau is an increasing Young tableau. Then the shape of these tableaux determines the Schur expansion.

\begin{figure}[ht]
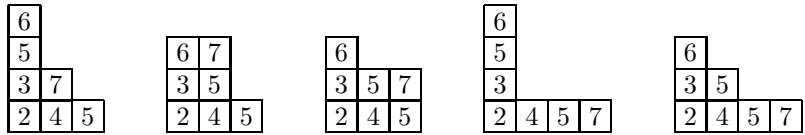

  \begin{displaymath}
    \begin{array}{c@{\hskip 2\cellsize}c@{\hskip 2\cellsize}c@{\hskip 2\cellsize}c@{\hskip 2\cellsize}c}
      \tableau{6 \\ 5 \\ 3 & 7 \\ 2 & 4 & 5} &
      \tableau{\\ 6 & 7 \\ 3 & 5 \\ 2 & 4 & 5} &
      \tableau{\\ 6 \\ 3 & 5 & 7 \\ 2 & 4 & 5} &
      \tableau{6 \\ 5 \\ 3 \\ 2 & 4 & 5 & 7} &
      \tableau{\\ 6 \\ 3 & 5 \\ 2 & 4 & 5 & 7} 
    \end{array}
  \end{displaymath}
   \caption{\label{fig:increasing}The set of increasing Young tableaux whose row reading words are reduced words for $w = 13625847$.}
\end{figure}

For example, we compute the Schur expansion of Stanley symmetric function $\stanley_{13625847}$ by constructing the five increasing Young tableaux in Figure~\ref{fig:increasing}, giving
\[ \stanley_{13625847} = s_{(3,2,1,1)} + s_{(3,2,2)} + s_{(3,3,1)} + s_{(4,1,1,1)} + s_{(4,2,1)} . \]

However, in the Schubert case, these are not the correct Coxeter--Knuth equivalence class representatives for giving the Demazure expansion. In light of Proposition~\ref{prop:key-stable}, there are many different candidates for which weak composition should index each class, even knowing the correct partition. Using the same techniques with which we prove the Edelman--Greene formula below, we also give an explicit algorithm to construct the correct Coxeter--Knuth equivalence class representatives for the polynomial case.

\begin{figure}[ht]
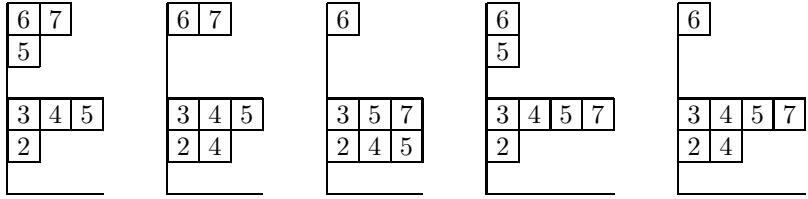

  \begin{displaymath}
    \begin{array}{c@{\hskip 2\cellsize}c@{\hskip 2\cellsize}c@{\hskip 2\cellsize}c@{\hskip 2\cellsize}c}
      \vline\tableau{6 & 7 \\ 5 \\ \\ 3 & 4 & 5 \\ 2 \\ \\\hline} &
      \vline\tableau{6 & 7 \\ \\ \\ 3 & 4 & 5 \\ 2 & 4 \\ \\\hline} &
      \vline\tableau{6 \\ \\ \\ 3 & 5 & 7 \\ 2 & 4 & 5 \\ \\\hline} &
      \vline\tableau{6 \\ 5 \\ \\ 3 & 4 & 5 & 7 \\ 2 \\ \\\hline} &
      \vline\tableau{6 \\ \\ \\ 3 & 4 & 5 & 7 \\ 2 & 4 \\ \\\hline} 
    \end{array}
  \end{displaymath}
   \caption{\label{fig:yam}The set of Yamanouchi key tableaux whose row reading words are reduced words for $w = 13625847$.}
\end{figure}

For example, we compute the Demazure expansion of the Schubert polynomial $\schubert_{13625847}$ by constructing the five \emph{Yamanouchi} key tableaux in Figure~\ref{fig:yam}, giving
\[ \schubert_{13625847} = \key_{(0,1,3,0,1,2)} + \key_{(0,2,3,0,0,2)} + \key_{(0,3,3,0,0,1)} + \key_{(0,1,4,0,1,1)} + \key_{(0,2,4,0,0,1)} . \]

\subsection{Increasing Young tableaux}
\label{sec:positive-increasing}

We begin by considering the descent tableaux for reduced words, and, more generally, any tableau with weakly increasing rows for which the reading word is reduced.

\begin{definition}
  Given two increasing words $\sigma,\tau$ of lengths $s,t$, respectively, define the \emph{drop alignment of $\sigma$ below $\tau$} as follows: If $\tau_j>\sigma_j$ for all $j\leq\min(s,t)$, then left justify $\sigma$ with respect to $\tau$. Otherwise, set $j_1$ to be the minimum index such that $\tau_j \leq \sigma_j$, align $\sigma_1\cdots\sigma_{j_1-1}$ directly under $\tau_1\cdots\tau_{j_1-1}$ and iterate the process with the drop alignment of $\sigma_{j_1}\cdots\sigma_s$ below $\tau_{j_1+1}\cdots\tau_{t}$.
  \label{def:drop-align}
\end{definition}

Visually, begin with $\sigma$ left justified under $\tau$, and from left to right, for each not strict column, slide entries of $\sigma$ from that column onward right by one position. For example, Fig.~\ref{fig:drop-ex} shows the drop alignments for two pairs of increasing words.

\begin{figure}[ht]
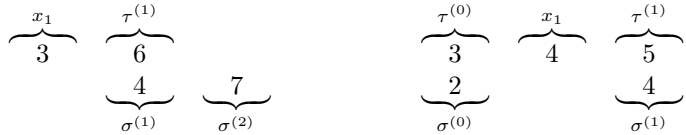

  \begin{displaymath}
    \begin{array}{cc}
      \begin{array}{ccc}
        \overbrace{3}^{x_1} & \overbrace{6}^{\tau^{(1)}} & \\ & \underbrace{4}_{\sigma^{(1)}} & \underbrace{7}_{\sigma^{(2)}}
      \end{array} & \hspace{3\cellsize}
      \begin{array}{ccc}
        \overbrace{3}^{\tau^{(0)}} & \overbrace{4}^{x_1} & \overbrace{5}^{\tau^{(1)}} \\ \underbrace{2}_{\sigma^{(0)}} & & \underbrace{4}_{\sigma^{(1)}}
      \end{array}
    \end{array} 
  \end{displaymath}
  \caption{\label{fig:drop-ex}The drop alignment of $(4,7)$ below $(3,6)$ (left) and of $(2,4)$ below $(3,4,5)$ (right).}
\end{figure}

If there are $k$ instances in the drop alignment of $\sigma$ below $\tau$ where a cell of $\tau$ has no cell below it, then we denote these cells of $\tau$ as $x_1,\ldots,x_k$ and factor $\tau=\tau^{(0)}x_1\tau^{(1)}\cdots x_k\tau^{(k)}$ and, correspondingly, $\sigma=\sigma^{(0)}\sigma^{(1)}\cdots\sigma^{(k)}\sigma^{(k+1)}$ as shown in Fig.~\ref{fig:drop_factor}. When the concatentation $\tau\sigma$ is a reduced word, then this factorization has the following properties.

\begin{figure}[ht]
  \begin{displaymath}
    \arraycolsep=1.5pt
    \begin{array}{lllllll}
      \tau^{(0)} & x_1 & \tau^{(1)} & \cdots & x_k & \tau^{(k)} & \\
      \sigma^{(0)} & & \sigma^{(1)} & \cdots & & \sigma^{(k)} & \sigma^{(k+1)} 
    \end{array} 
  \end{displaymath}
  \caption{\label{fig:drop_factor}An illustration of the drop alignment of $\sigma$ below $\tau$ and corresponding factorizations of $\tau$ and $\sigma$.}
\end{figure}

\begin{proposition}
  Given two increasing words $\tau,\sigma$ such that $\tau\sigma$ is reduced, there is a unique factorization $\tau=\tau^{(0)}x_1\tau^{(1)}\cdots x_k\tau^{(k)}$ and $\sigma=\sigma^{(0)}\sigma^{(1)}\cdots\sigma^{(k)}\sigma^{(k+1)}$, with some $\tau^{(i)}$ or $\sigma^{(i)}$ possibly empty, such that
  \begin{enumerate}
  \item $\ell(\tau^{(j)}) = \ell(\sigma^{(j)})$ for $j=1,\ldots,k$;
  \item $\tau^{(j)}_i > \sigma^{(j)}_i$ for $j=1,\ldots,k$ and $i=1,\ldots,\ell(\tau^{(j)})$;
  \item $x_j \leq \sigma^{(j)}_1$ with equality only if $\tau^{(j)}_1 = \sigma^{(j)}_1+1$, for $j=1,\ldots,k$.
  \end{enumerate}  
  \label{prop:drop}
\end{proposition}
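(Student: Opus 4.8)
The plan is to show that the drop alignment procedure of Definition~\ref{def:drop-align}, applied to $\sigma$ below $\tau$, produces the claimed factorization, and that this is the \emph{only} factorization with properties (1)--(3). First I would set up notation: the $x_j$ are precisely the cells of $\tau$ that have no cell of $\sigma$ directly below them in the drop alignment, listed left to right, and the factorizations $\tau = \tau^{(0)}x_1\tau^{(1)}\cdots x_k\tau^{(k)}$ and $\sigma = \sigma^{(0)}\sigma^{(1)}\cdots\sigma^{(k)}\sigma^{(k+1)}$ are read off by cutting at the columns of the $x_j$. Property (1) is then essentially immediate from the construction, since between two consecutive ``drop'' events $x_j, x_{j+1}$ the alignment is a straight left-justification of one run of $\sigma$ under one run of $\tau$, forcing the lengths to match; I would formalize this by induction on $k$, peeling off $\tau^{(0)}x_1$ and $\sigma^{(0)}$ and recursing on the drop alignment of $\sigma^{(j_1)}\cdots\sigma^{(k+1)}$ below $\tau^{(1)}\cdots\tau^{(k)}$ as in the definition.

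Property (2) should follow from the defining condition of the drop alignment: within each block $\tau^{(j)},\sigma^{(j)}$ we are in the ``left-justify'' regime, which is invoked exactly when $\tau_i > \sigma_i$ columnwise; the point is that the procedure cuts at $j_1$, the first failure of strict dominance, so strict dominance holds on each surviving block by construction. Property (3) is the genuinely delicate one and I expect it to be the main obstacle. The inequality $x_j \le \sigma^{(j)}_1$ is essentially the statement that the ``drop'' at $x_j$ was triggered — i.e.\ that $\tau$ at the column of $x_j$ failed to strictly exceed the corresponding entry of $\sigma$, which after re-indexing is $\sigma^{(j)}_1$; the subtlety is the equality case $x_j = \sigma^{(j)}_1$, where I must invoke that $\tau\sigma$ is a \emph{reduced} word. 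Here the idea is that if $x_j = \sigma^{(j)}_1 =: m$, then both $\tau$ and $\sigma$ contain the letter $m$, and since both words are individually increasing with $\tau$ preceding $\sigma$, the concatenation $\tau\sigma$ contains $\ldots m \ldots m \ldots$; for this to remain reduced, the letters strictly between the two copies of $m$ must allow the braid/commutation moves to reduce, which forces the neighboring entry $\tau^{(j)}_1$ (the entry of $\tau$ just after $x_j$) to equal $m+1 = \sigma^{(j)}_1 + 1$ rather than being $\ge m+2$. I would make this precise by a short reduced-word argument: a subword of the form $m, (\text{entries} > m+1), m$ inside a reduced word is impossible because $s_m$ commutes past everything $>m+1$, producing $s_m s_m$; hence the entry immediately to the right of $x_j$ in $\tau$ must be $\le m+1$, and since rows are strictly increasing and $x_j < \tau^{(j)}_1$, it is exactly $m+1$.

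For uniqueness, I would argue that any factorization satisfying (1)--(3) must have its cut points at exactly the drop columns: properties (1) and (2) force each block $(\tau^{(j)},\sigma^{(j)})$ to be a maximal strictly-dominated left-justified run, and property (3) pins down where the non-dominated cell $x_j$ sits relative to the next block, so the factorization is determined by the columnwise comparison of $\tau$ and $\sigma$ — which is exactly what the drop alignment records. Finally I would remark that some $\tau^{(i)}$ or $\sigma^{(i)}$ may be empty (e.g.\ $\sigma^{(0)}$ is empty precisely when $\tau_1 \le \sigma_1$, and $\sigma^{(k+1)}$ is empty when $\ell(\sigma) = \ell(\tau) - k$), which the statement already allows, so no separate argument is needed there. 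The only real content beyond bookkeeping is the equality case of (3), so that is where I would concentrate the write-up.
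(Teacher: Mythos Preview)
Your plan is essentially the paper's: properties (1) and (2) are immediate from the drop-alignment construction, and the real content is the equality case of (3), which you and the paper both handle by showing that two copies of $m$ in $\tau\sigma$ can be commuted adjacent, contradicting reducedness. Your treatment of uniqueness is in fact more explicit than the paper's, which simply asserts that the drop-alignment algorithm is well defined.

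There is one small gap in your argument for (3). You write that the letters between the two copies of $m$ form a pattern $m,(\text{entries}>m+1),m$, and then commute one $s_m$ across. But between $x_j=m$ (in $\tau$) and $\sigma^{(j)}_1=m$ (in $\sigma$) there sit not only the tail of $\tau$ (indeed all $>m$) but also the initial segment $\sigma^{(0)}\sigma^{(1)}\cdots\sigma^{(j-1)}$ of $\sigma$, whose entries are all $<m$, not $>m+1$. Your commutation argument as stated does not dispose of these, and a priori one of them could equal $m-1$, blocking the commutation of $s_m$.

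The fix is short and worth writing in. Every such $\sigma_q$ is aligned under some $\tau_p$ lying strictly left of $x_j$; strict column dominance gives $\sigma_q<\tau_p$, and since $\tau$ is increasing $\tau_p<x_j=m$, hence $\sigma_q\le m-2$. Thus all intervening letters are either $\ge m+2$ (from $\tau$, under your hypothesis $\tau^{(j)}_1\ne m+1$) or $\le m-2$ (from $\sigma$), and $s_m$ commutes past all of them. The paper carries out the equivalent manoeuvre by first commuting the early $\sigma$'s leftward past the late $\tau$'s and then observing $\tau_{j_1}$ and $\sigma_{j_1}$ are adjacent; your route is the same argument from the other side once this point is addressed.
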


\begin{proof}
  The algorithm in Definition~\ref{def:drop-align} is clearly well-defined and conditions (1) and (2) follow immediately by construction. The hypothesis that $\tau\sigma$ is reduced is needed only for condition (3). Suppose $j_1$ is the minimum index such that $\tau_j \leq \sigma_j$ and that, in fact, $\tau_{j_1} = \sigma_{j_1}$. Condition (3) requires $\tau_{j_1+1} = \tau_{j_1}+1$. Suppose, for contradiction, this does not hold. Then either $j_1=t$ or $\tau_{j_1+1} > \tau_{j_1}+1$.

  By minimality of $j_1$, we have $\tau_i>\sigma_i$ for all $i<j_1$. In particular, $\sigma_1,\ldots,\sigma_{j_1-1} < \tau_{j_1-1} < \tau_{j_1}$, so the word $\tau\sigma$ is Coxeter equivalent to the word
  \[ \tau_1 \cdots \tau_{j_1-1} \sigma_1 \cdots \sigma_{j_1-1} \tau_{j_1} \cdots \tau_t \sigma_{j_1} \cdots \sigma_s . \]
  If $j_1=t$, then $\tau_{j_1} = \sigma_{j_1}$ are adjacent, contradicting the fact that the word is reduced. If $j_1<t$, then $\tau_{j_1+1} > \tau_{j_1}+1 = \sigma_{j_1}+1$, and so the above word is Coxeter equivalent to the word
  \[ \tau_1 \cdots \tau_{j_1-1} \sigma_1 \cdots \sigma_{j_1-1} \tau_{j_1} \sigma_{j_1} \tau_{j_1+1}\cdots \tau_t \sigma_{j_1} \cdots \sigma_s . \]
  In this case as well, $\tau_{j_1} = \sigma_{j_1}$ are adjacent, contradicting the fact that the word is reduced. Thus condition (3) must hold whenever $\tau\sigma$ is reduced.  
\end{proof}

We call this factorization the \emph{drop} alignment because, as we show below, we may drop the unsupported cells $x_1,\ldots,x_k$ from $\tau$ down to $\sigma$ without changing the Coxeter--Knuth equivalence class.

\begin{definition}
  Given two increasing words $\tau,\sigma$ such that $\tau\sigma$ is reduced, define 
  \[ \drop(\tau\sigma) = \tau^{(0)} \tau^{(1)} \cdots \tau^{(k)} \sigma^{(0)} x_1 \hat{\sigma}^{(1)}\cdots x_k \hat{\sigma}^{(k)}\sigma^{(k+1)}, \]
  where $\tau=\tau^{(0)}x_1\tau^{(1)}\cdots x_k\tau^{(k)}$ and $\sigma=\sigma^{(0)}\sigma^{(1)}\cdots\sigma^{(k)}\sigma^{(k+1)}$ is the unique factorization of Proposition~\ref{prop:drop} and for $1 \leq j \leq k$, we set
  \[ \hat{\sigma}^{(j)}_i = \left\{ \begin{array}{ll}
    \sigma^{(j)}_i+1 & \text{for} \ 1 \leq i \leq b_j \\
    \sigma^{(j)}_i & \text{for} \ b_j+1 \leq i \leq \ell(\sigma^{(j)})
  \end{array} \right. \]
  where $b_j = \max\{b \mid  \tau^{(j)}_1 = \sigma^{(j)}_i+i \ \forall 1 \leq i \leq b \}$ if $x_j = \tau^{(j)}_1-1$ and $0$ otherwise.
  \label{def:drop-two}
\end{definition}

For example, dropping the aligned words in Fig.~\ref{fig:drop-ex} results in the words in Fig.~\ref{fig:drop-ex-2}.

\begin{figure}[ht]
  \begin{displaymath}
    \begin{array}{cc}
      \begin{array}{ccc}
        & \overbrace{6}^{\tau^{(1)}} & \\ \underbrace{3}_{x_1} & \underbrace{4}_{\sigma^{(1)}} & \underbrace{7}_{\sigma^{(2)}}
      \end{array} & \hspace{3\cellsize}
      \begin{array}{ccc}
        \overbrace{3}^{\tau^{(0)}} & & \overbrace{5}^{\tau^{(1)}} \\ \underbrace{2}_{\sigma^{(0)}} & \underbrace{4}_{x_1} & \underbrace{5}_{\hat{\sigma}^{(1)}}
      \end{array}
    \end{array} 
  \end{displaymath}
  \caption{\label{fig:drop-ex-2}The drop of $(4,7)$ below $(3,6)$ (left) and of $(2,4)$ below $(3,4,5)$ (right).}
\end{figure}

The following elementary lemma will be useful in proving that $\tau\sigma$ is Coxeter--Knuth equivalent to $\drop(\tau\sigma)$.

\begin{lemma}
  Given an increasing word $\sigma=(\sigma_{\ell},\ldots,\sigma_1)$ and a letter $x$ such that $x < \sigma_{\ell}$, we have $\sigma_{\ell}x\sigma_{\ell-1}\cdots\sigma_1$ is Coxeter--Knuth equivalent to $\sigma_{\ell}\sigma_{\ell-1}\cdots\sigma_1 x$.
  \label{lem:slide}
\end{lemma}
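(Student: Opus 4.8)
The plan is to prove Lemma~\ref{lem:slide} by induction on $\ell$, the length of $\sigma$, pushing the letter $x$ rightward past the letters of $\sigma$ one step at a time using the elementary Coxeter--Knuth relations $\CK_i$. The key observation is that $\sigma$ is increasing, so $x < \sigma_\ell \leq \sigma_{\ell - 1} + 1$... wait, more carefully: since $\sigma$ is strictly increasing when read as $\sigma_\ell < \sigma_{\ell-1} < \cdots < \sigma_1$? Let me re-read the convention: the word is $(\sigma_\ell, \ldots, \sigma_1)$ and it is ``increasing,'' which in this paper's run-decomposition convention means $\sigma_\ell < \sigma_{\ell-1} < \cdots < \sigma_1$ reading left to right. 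So $x < \sigma_\ell < \sigma_{\ell-1} < \cdots$, hence $x$ is strictly smaller than every letter of $\sigma$, and in fact $x \leq \sigma_\ell - 1 < \sigma_{\ell - 1} - 1 < \cdots$.

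First I would handle the base case $\ell = 1$: the word $\sigma_1 x$ versus $\sigma_1 x$ is already trivial, so really the first nontrivial case is $\ell = 2$, where we compare $\sigma_2 x \sigma_1$ with $\sigma_2 \sigma_1 x$. Here I look at the triple $(\sigma_2, x, \sigma_1)$ in positions $(3,2,1)$ of the word and apply $\CK_2$: since $x$ is the smallest of the three and $\sigma_2 < \sigma_1$, we are in the case $\rho_{i+1} > \rho_{i-1} > \rho_i$ (with $\rho_{i+1} = \sigma_2$, $\rho_i = x$, $\rho_{i-1} = \sigma_1$... let me match indices: in the word $\sigma_2 x \sigma_1$, index $i$ should point at $x$, so $\rho_{i-1} = \sigma_1$, $\rho_i = x$, $\rho_{i+1} = \sigma_2$; then $\sigma_1 > \sigma_2 > x$ means $\rho_{i-1} > \rho_{i+1} > \rho_i$, which is the $\swap_{i-1}$ case), so $\CK$ swaps the first two letters giving $x \sigma_2 \sigma_1$ — hmm, that moves $x$ left, not right. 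I need to set this up pushing from the left. The cleaner formulation: show $\sigma_\ell x \sigma_{\ell-1}\cdots\sigma_1 \equiv \sigma_\ell \sigma_{\ell - 1} x \sigma_{\ell-2} \cdots \sigma_1 \equiv \cdots$, moving $x$ to the right one letter at a time. To move $x$ past $\sigma_j$ when $x$ sits between $\sigma_{j+1}$ and $\sigma_j$ (both larger than $x$, and $\sigma_{j+1} < \sigma_j$), I apply the $\CK$ relation centered at $x$: the triple is $(\sigma_{j+1}, x, \sigma_j)$ reading the word left to right, i.e. $\rho_{i-1} = \sigma_{j+1}$, $\rho_i = x$, $\rho_{i+1} = \sigma_j$, and since $x < \sigma_{j+1} < \sigma_j$ we have $\rho_{i+1} > \rho_{i-1} > \rho_i$, the $\swap_i$ case, which exchanges $\rho_i$ and $\rho_{i+1}$, i.e. swaps $x$ past $\sigma_j$. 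This works because $|x - \sigma_j| \geq 2$ (as $x \leq \sigma_\ell - 1 < \sigma_j - 1$ for $j < \ell$), so the commutation is legitimate, and crucially the $\CK$ relation is an equivalence, so these are Coxeter--Knuth equivalent.

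The main subtlety I expect is the \emph{boundary cases}: the very first step (moving $x$ past $\sigma_{\ell-1}$, where to its left sits $\sigma_\ell$) and confirming that at every intermediate step the letter to the immediate left of $x$ is indeed larger than $x$ and the letter to the immediate right of $x$ is larger still but with gap at least $2$ — all of which follows from $\sigma$ being strictly increasing and $x < \sigma_\ell$. One also has to check that after $x$ passes all of $\sigma_\ell, \ldots, \sigma_1$ there is nothing to its right, so the process simply terminates with $x$ at the end. I would write this as a short induction: assuming $\sigma_{\ell-1}\cdots\sigma_1$ with the prepended smaller letter slides to the end, prepend $\sigma_\ell$ and note that the first $\CK$ move slides $x$ past $\sigma_{\ell-1}$ (valid since $\sigma_\ell < \sigma_{\ell-1}$, $x < \sigma_\ell$, gap conditions hold), reducing to the inductive hypothesis on the suffix $\sigma_{\ell-1} x \sigma_{\ell-2}\cdots\sigma_1$ — though one must take care that the presence of $\sigma_\ell$ on the left does not interfere, which it does not since $\CK$ relations act locally on three consecutive positions. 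This keeps the argument to a clean paragraph and sidesteps any case analysis beyond identifying which branch of Definition~\ref{def:dual-stanley} applies at each step.
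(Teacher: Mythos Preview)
Your proposal is correct and follows essentially the same approach as the paper: both slide $x$ rightward one position at a time by applying a single elementary Coxeter--Knuth relation at each step, using that $x < \sigma_k < \sigma_{k-1}$ places the triple in the appropriate commutation case. Your indexing of $\rho_{i-1},\rho_i,\rho_{i+1}$ is reversed relative to the paper's right-to-left convention (so the relevant case is actually $\swap_{i-1}$ rather than $\swap_i$), but the identification of which two letters swap---$x$ and the $\sigma_j$ to its right---and the resulting chain of equivalences are correct.
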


\begin{proof}
  We claim $\CK_{2}\cdots\CK_{\ell-1}(\sigma_{\ell}x\sigma_{\ell-1}\cdots\sigma_1) = \sigma x$, from which the assertion follows. To see this, notice that from the hypotheses we have $x < \sigma_{k} < \sigma_{k-1}$ for $k=\ell,\ldots,2$. Thus we may apply $\CK_{k-1}$, for $k=\ell,\ldots,3$, with effect
  \[ \CK_{k-1}(\sigma_{\ell}\cdots\sigma_{k}x\sigma_{k-1}\cdots\sigma_1) = \sigma_{\ell}\cdots\sigma_{k-1}x\sigma_{k-2}\cdots\sigma_1. \]
  The culmination of these elementary equivalences gives the desired result.
\end{proof}

\begin{lemma}[Drop Lemma]
  Given two increasing words $\tau,\sigma$ such that $\tau\sigma$ is reduced, $\drop(\tau\sigma)$ is Coxeter--Knuth equivalent to $\tau\sigma$.
  \label{lem:drop}
\end{lemma}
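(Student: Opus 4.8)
The plan is to prove the Drop Lemma by induction on the number $k$ of unsupported cells $x_1,\ldots,x_k$ in the drop alignment of $\sigma$ below $\tau$, reducing to a single "drop step" and then chaining the elementary Coxeter--Knuth equivalences from Lemma~\ref{lem:slide}. The base case $k=0$ is immediate: the alignment left-justifies $\sigma$ under $\tau$, so $\drop(\tau\sigma) = \tau\sigma$ and there is nothing to prove.

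For the inductive step, I would first isolate the rightmost unsupported cell $x_k$ and the factorization $\tau = \tau^{(0)} x_1 \tau^{(1)} \cdots x_k \tau^{(k)}$, $\sigma = \sigma^{(0)} \sigma^{(1)} \cdots \sigma^{(k)} \sigma^{(k+1)}$ of Proposition~\ref{prop:drop}. Using commutation relations $\swap_j$, I would slide the block $\sigma^{(0)}\sigma^{(1)}\cdots\sigma^{(k-1)}$ past the corresponding increasing portions of $\tau$ to the right; this is legitimate because condition (2) of Proposition~\ref{prop:drop} guarantees the relevant entries of $\sigma$ are strictly smaller than the entries of $\tau$ they must pass, and an increasing word commuting past a strictly larger increasing word is a composition of valid $\swap$'s (this is the content one extracts from the argument in the proof of Proposition~\ref{prop:drop}). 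This brings $\tau\sigma$ to a Coxeter-equivalent form $\tau^{(0)}\tau^{(1)}\cdots\tau^{(k-1)}\,x_1\cdots x_{k-1}\,\bigl(x_k \tau^{(k)} \sigma^{(k)} \sigma^{(k+1)}\bigr)\,\sigma^{(0)}\cdots\sigma^{(k-1)}$ — modulo bookkeeping — so that the local picture around $x_k$ is exactly the situation of Lemma~\ref{lem:slide}: we have an increasing run $x_k\tau^{(k)}$ (note $x_k < \tau^{(k)}_1$ since $\tau$ is increasing) followed by $\sigma^{(k)}$, and we want to move $x_k$ rightward past $\tau^{(k)}$. Here I must split on whether $x_k = \tau^{(k)}_1 - 1$ (so $\CK$ at the junction is a braid move, which is why the definition of $\hat\sigma^{(k)}$ raises the first $b_k$ entries of $\sigma^{(k)}$ by $1$) or $x_k < \tau^{(k)}_1 - 1$ (so it is a commutation, and $\hat\sigma^{(k)} = \sigma^{(k)}$, i.e. $b_k = 0$); condition (3) of Proposition~\ref{prop:drop} is precisely what ensures these are the only two cases. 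Applying Lemma~\ref{lem:slide} (in the braid case, iterated: each braid move pushes the "$+1$" defect one step to the right, forcing the cascade described by $b_k$) converts $x_k\tau^{(k)}\sigma^{(k)}$ into $\tau^{(k)}\, x_k\, \hat\sigma^{(k)}$. Sliding the $\sigma^{(0)}\cdots\sigma^{(k-1)}$ blocks back into position via the inverse $\swap$'s and invoking the inductive hypothesis on the remaining $k-1$ unsupported cells completes the step.

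The main obstacle I anticipate is the braid-cascade case: when $x_k = \tau^{(k)}_1 - 1$, a single braid move at the $\tau^{(k)} / \sigma^{(k)}$ junction replaces $\tau^{(k)}_1\, x_k\, \sigma^{(k)}_1$ by $x_k'\, \tau^{(k)}_1\, \sigma^{(k)}_1$ with $x_k' = \sigma^{(k)}_1 + 1$ only under the numerology $\tau^{(k)}_1 = \sigma^{(k)}_1 + 1$; if instead $\tau^{(k)}_1 = \sigma^{(k)}_1 + i + 1$ for the relevant index, the braid propagates and one must carefully track that the entries $\sigma^{(k)}_1, \ldots, \sigma^{(k)}_{b_k}$ get incremented in exactly the pattern encoded by $b_k = \max\{b : \tau^{(k)}_1 = \sigma^{(k)}_i + i \ \forall i \le b\}$, and that at step $b_k + 1$ the numerology fails so the cascade terminates (after which a plain commutation finishes the slide). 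Verifying that this cascade of braid and commutation moves is exactly a legal sequence of $\CK_i$'s — and that it reproduces the formula for $\hat\sigma^{(k)}$ in Definition~\ref{def:drop-two} term by term — is the delicate computational heart of the argument; everything else is organizing the commutations so that, at each stage, the relevant three-letter window has the inequality pattern required by Definition~\ref{def:dual-stanley}.
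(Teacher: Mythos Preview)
Your high-level architecture --- induction on $k$, peel off the rightmost unsupported cell $x_k$ first, and recognize that the equality case $x_k = \tau^{(k)}_1 - 1$ triggers a braid cascade producing the increments in $\hat\sigma^{(k)}$ --- matches the paper's proof. The gap is in the mechanics of isolating that single $x_k$.

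You propose to reach the intermediate form
\[
\tau^{(0)}\tau^{(1)}\cdots\tau^{(k-1)}\,x_1\cdots x_{k-1}\,\bigl(x_k \tau^{(k)} \sigma^{(k)} \sigma^{(k+1)}\bigr)\,\sigma^{(0)}\cdots\sigma^{(k-1)}
\]
by ``commutation relations $\swap_j$'', work locally there, then slide back. This does not work, for two reasons. First, bare commutations $\swap_j$ are not Coxeter--Knuth moves: $\CK_i$ only realizes $\swap_{i-1}$ or $\swap_i$ when the \emph{third} letter in the window sits in the correct relative position. Conjugating by arbitrary $\swap_j$'s and performing a genuine $\CK$ move in the middle proves only Coxeter equivalence of the endpoints, which is vacuous (all of $\R(w)$ is a single Coxeter class). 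Second, your intermediate form is unreachable even by general Coxeter moves: it requires commuting $\sigma^{(0)}\cdots\sigma^{(k-1)}$ rightward past $\sigma^{(k)}\sigma^{(k+1)}$, but these are contiguous blocks of the single increasing word $\sigma$, so adjacent letters across the boundary can differ by $1$ and cannot commute at all. Similarly, collecting $x_1,\ldots,x_{k-1}$ together requires commuting each $x_j$ past $\tau^{(j)}$, which fails exactly when $x_j = \tau^{(j)}_1 - 1$. Finally, even in your form the local picture $x_k\tau^{(k)}\sigma^{(k)}$ is not the setup of Lemma~\ref{lem:slide}, which needs the small letter in \emph{second} position of an increasing run.

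The paper's fix is to \emph{interleave} rather than block-slide. Treat the $k=1$ case directly: repeatedly apply Lemma~\ref{lem:slide} (a genuine CK equivalence) to pull each $\sigma^{(j)}_i$ leftward until it sits immediately right of the matching $\tau^{(j)}_i$, arriving at
\[
\tau^{(0)}_1 \sigma^{(0)}_1 \cdots \tau^{(0)}_{\ell_0} \sigma^{(0)}_{\ell_0}\; x\; \tau^{(1)}_1 \sigma^{(1)}_1 \cdots \tau^{(1)}_{\ell_1} \sigma^{(1)}_{\ell_1}\; \sigma^{(2)}.
\]
Now the three-letter window $x\,\tau^{(1)}_1\,\sigma^{(1)}_1$ is exactly a $\CK_2$ pattern: a commutation when $\tau^{(1)}_1 > \sigma^{(1)}_1 + 1$, or a braid when $x = \sigma^{(1)}_1 = \tau^{(1)}_1 - 1$, the latter launching precisely the cascade you described. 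After that, de-interleave with Lemma~\ref{lem:slide} again. For $k>1$ one applies this $k=1$ argument with $\tau^{(0)}x_1\cdots x_{k-1}\tau^{(k-1)}$ playing the role of the new $\tau^{(0)}$ and $\sigma^{(0)}\cdots\sigma^{(k-1)}$ the new $\sigma^{(0)}$, then inducts. Your braid-cascade analysis is the right computation; it just has to live inside this interleaved form rather than your block-separated one.
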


\begin{proof}
  With the factorization as denoted in Proposition~\ref{prop:drop}, consider first the case $k=1$, in which we must show $\tau^{(0)} x \tau^{(1)} \sigma^{(0)}\sigma^{(1)}\sigma^{(2)}$ is Coxeter--Knuth equivalent to $\tau^{(0)} \tau^{(1)} \sigma^{(0)} x \hat{\sigma}^{(1)}\sigma^{(2)}$. Let $\ell_j = \ell(\sigma^{(j)})$ for $j=0,1,2$. Since $\tau$ is increasing and $\tau^{(j)}_i > \sigma^{(j)}_i$ for $j=0,1$, we can apply Lemma~\ref{lem:slide} $\ell_0+\ell_1$ times to move letters of $\sigma^{(0)}\sigma^{(1)}$ left so that $\tau\sigma$ is Coxeter--Knuth equivalent to
  \begin{equation}
    \tau^{(0)}_1 \sigma^{(0)}_1 \cdots \tau^{(0)}_{\ell_0} \sigma^{(0)}_{\ell_0} x \tau^{(1)}_1 \sigma^{(1)}_1 \cdots \tau^{(1)}_{\ell_1} \sigma^{(1)}_{\ell_1} \sigma^{(2)} .
    \label{e:up}
  \end{equation}
  If $\ell_1=0$, then we may apply Lemma~\ref{lem:slide} to $\sigma^{(0)}_i$ for $i<\ell_0$ to deduce Eq.~\eqref{e:up} is Coxeter--Knuth equivalent to
  \[ \tau^{(0)}_1 \cdots \tau^{(0)}_{\ell_0} \sigma^{(0)}_1 \cdots \sigma^{(0)}_{\ell_0} x \sigma^{(2)}, \]
  as desired. If $\ell_1>0$ and $\tau^{(1)}_1 > \sigma^{(1)}_1+1$, then by Proposition~\ref{prop:drop}(3), $x < \sigma^{(1)}_1 < \tau^{(j)}_1$, and so $\CK_2(x \tau^{(1)}_1 \sigma^{(1)}_1) = \tau^{(1)}_1 x \sigma^{(1)}_1$. Applying Lemma~\ref{lem:slide} $\ell_1$ times to the letters of $\sigma^{(1)}$, then to $x$, then $\ell_0$ times to the letters of $\sigma^{(0)}$ shows Eq.~\eqref{e:up} is Coxeter--Knuth equivalent to $\tau^{(0)} \tau^{(1)} \sigma^{(0)} x \sigma^{(1)}\sigma^{(2)}$, as desired.

  Finally, suppose $\ell_1>0$ and $\tau^{(1)}_{1} = \sigma^{(1)}_i+i$ for $i\leq b$ with $b>0$ maximal. By Proposition~\ref{prop:drop}(3), $\CK_2(x \tau^{(1)}_1 \sigma^{(1)}_1) = \tau^{(1)}_1 x (\sigma^{(1)}_1+1)$. Then notice $\tau^{(1)}_{i} = \sigma^{(1)}_i+1$ for $i\leq b$, since otherwise $(\sigma^{(1)}_{i-1}+1) \tau^{(1)}_{i} \sigma^{(1)}_{i}$ is Coxeter equivalent to $(\sigma^{(1)}_{i-1}+1) \sigma^{(1)}_{i} \tau^{(1)}_{i}$, which is not reduced. Therefore $\CK_2((\sigma^{(1)}_{i-1}+1) \tau^{(1)}_{i} \sigma^{(1)}_{i}) = \tau^{(1)}_{i} (\sigma^{(1)}_{i-1}+1) (\sigma^{(1)}_{i}+1)$. Putting this together, Eq.~\eqref{e:up} is Coxeter--Knuth equivalent to
  \[ \tau^{(0)}_1 \sigma^{(0)}_1 \cdots \tau^{(0)}_{\ell_0} \sigma^{(0)}_{\ell_0} \tau^{(1)}_1 x \tau^{(1)}_2 (\sigma^{(1)}_1+1) \cdots \tau^{(1)}_{b} (\sigma^{(1)}_{b-1}+1) (\sigma^{(1)}_{b}+1) \tau^{(1)}_{b+1} \sigma^{(1)}_{b+1} \cdots \tau^{(1)}_{\ell_1} \sigma^{(1)}_{\ell_1} \sigma^{(2)} . \]
  Notice that the letters of $\sigma^{(1)}$ have now been raised to letters of $\hat{\sigma}^{(1)}$. Finally, apply Lemma~\ref{lem:slide} $\ell_1$ times to the letters of $\hat{\sigma}^{(1)}$, then to $x$, then $\ell_0$ times to the letters of $\sigma^{(0)}$ to see this is Coxeter--Knuth equivalent to $\tau^{(0)} \tau^{(1)} \sigma^{(0)} x \hat{\sigma}^{(1)}\sigma^{(2)}$, as desired.

  For $k>1$, the above case shows $\tau\sigma$ is Coxeter--Knuth equivalent to 
  \[ \tau^{(0)} x_1 \tau^{(1)} \cdots x_{k-1} \tau^{(k-1)} \tau^{(k)} \sigma^{(0)} \sigma^{(1)}\cdots \sigma^{(k-1)} x_k \hat{\sigma}^{(k)}\sigma^{(k+1)}. \]
  However, notice that this new pair factors uniquely by combining $\tau^{(k-1)}$ and $\tau^{(k)}$, as well as $\sigma^{(k-1)}$, $x$, and $\sigma^{(k)}_1\cdots\sigma^{(k)}_{\ell_k-1}$, then also combining $\sigma^{(k)}_{\ell_k}$ and $\sigma^{(k+1)}$. Therefore there are fewer factors in the result, so by induction, we may drop the remaining $x_1,\ldots,x_{k-1}$ as well, completing the proof  
\end{proof}

Extending Definition~\ref{def:drop-two}, we define the drop of any reduced word based on the rows of its descent tableau.

\begin{definition}
  Let $\rho$ be a reduced word, and let $(\rho^{(k)} | \cdots | \rho^{(1)})$ be an increasing decomposition of $\rho$. Define $\drop_i(\rho)$ by replacing $\rho^{(i+1)}\rho^{(i)}$ with $\drop(\rho^{(i+1)}\rho^{(i)})$. 
  \label{def:drop_i}
\end{definition}

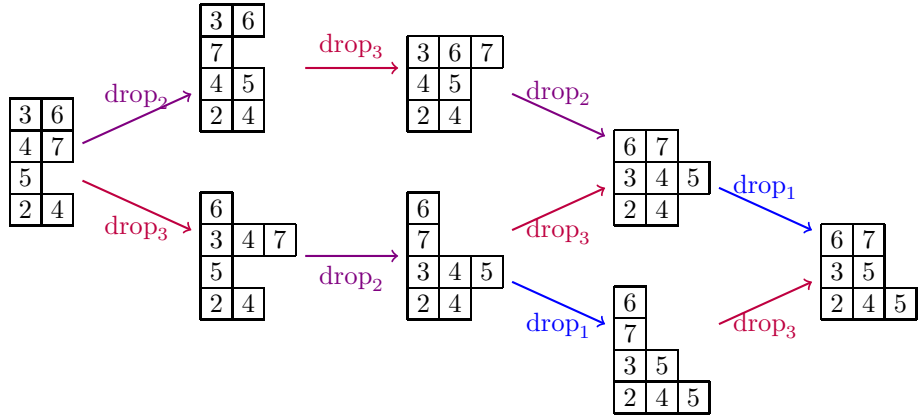
\begin{figure}[ht]  
  \begin{center}
    \begin{tikzpicture}[xscale=2.75,yscale=1.25]
      \node at (0,2)   (T02) {$\tableau{3 & 6 \\ 4 & 7 \\ 5 \\ 2 & 4}$}; 
      \node at (1,3)   (T13) {$\tableau{3 & 6 & \\ 7 \\ 4 & 5 \\ 2 & 4}$}; 
      \node at (1,1)   (T11) {$\tableau{6 \\ 3 & 4 & 7 \\ 5 \\ 2 & 4}$}; 
      \node at (2,3)   (T23) {$\tableau{\\ 3 & 6 & 7 \\ 4 & 5 \\ 2 & 4}$}; 
      \node at (2,1)   (T21) {$\tableau{6 \\ 7 \\ 3 & 4 & 5 \\ 2 & 4}$}; 
      \node at (3,2)   (T32) {$\tableau{\\ 6 & 7 \\ 3 & 4 & 5 \\ 2 & 4}$}; 
      \node at (3,0)   (T30) {$\tableau{6 \\ 7 \\ 3 & 5 \\ 2 & 4 & 5}$}; 
      \node at (4,1)   (T41) {$\tableau{\\ 6 & 7 \\ 3 & 5 \\ 2 & 4 & 5}$}; 
      \draw[thick,color=violet,->] (T02) -- (T13) node[midway,above] {$\drop_2$} ;
      \draw[thick,color=purple,->] (T02) -- (T11) node[midway,below] {$\drop_3$} ;
      \draw[thick,color=purple,->] (T13) -- (T23) node[midway,above] {$\drop_3$} ;
      \draw[thick,color=violet,->] (T11) -- (T21) node[midway,below] {$\drop_2$} ;
      \draw[thick,color=violet,->] (T23) -- (T32) node[midway,above] {$\drop_2$} ;
      \draw[thick,color=purple,->] (T21) -- (T32) node[midway,below] {$\drop_3$} ;
      \draw[thick,color=blue  ,->] (T21) -- (T30) node[midway,below] {$\drop_1$} ;
      \draw[thick,color=blue  ,->] (T32) -- (T41) node[midway,above] {$\drop_1$} ;
      \draw[thick,color=purple,->] (T30) -- (T41) node[midway,below] {$\drop_3$} ;
    \end{tikzpicture}
  \end{center}
  \caption{\label{fig:drop_braid}All drop sequences taking $\D(3,6,4,7,5,2,4)$ to the unique increasing tableau in its Coxeter--Knuth equivalence class.}
\end{figure}

We visualize increasing factorizations as tableaux with strictly increasing rows, and then the drop maps can be visualized as dropping cells in the tableaux, as shown in Fig.~\ref{fig:drop_braid}.

\begin{proposition}
  For $\rho$ a reduced word with run decomposition $(\rho^{(k)} | \cdots | \rho^{(1)})$, $\D(\rho)$ is an increasing Young tableau if and only if $\drop_i(\rho)=\rho$ for all $1\leq i<k$.
  \label{prop:drop-inc}
\end{proposition}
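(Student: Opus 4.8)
The plan is to reduce both sides of the stated equivalence to the same pair of inequalities comparing consecutive runs, via a direct description of when $\drop$ fixes the concatenation of two runs.

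First I would translate the left-hand side. By Definition~\ref{def:d-red}, the $j$-th row from the bottom of $\D(\rho)$ is exactly the run $\rho^{(j)}$ read left-to-right into columns $1,2,\dots$, and since runs are strictly increasing the rows of $\D(\rho)$ are automatically strictly increasing. So $\D(\rho)$ is an increasing Young tableau precisely when, for every $1\le i<k$, one has $|\rho^{(i)}|\ge|\rho^{(i+1)}|$ (the shape is a Young diagram) together with $\rho^{(i)}_c<\rho^{(i+1)}_c$ for all $c\le|\rho^{(i+1)}|$ (columns strictly increase): under the first condition the rows meeting a fixed column form an initial segment $1,\dots,m$, so column-strictness for all rows reduces to column-strictness for consecutive rows.

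Next, fix $i$ and write $\tau=\rho^{(i+1)}$, $\sigma=\rho^{(i)}$. I would record two facts: $\tau\sigma$ is reduced (it is a factor of the reduced word $\rho$), and $\max(\tau)>\min(\sigma)$ (consecutive runs are separated by a descent of $\rho$). The key claim is that, under these hypotheses, $\drop(\tau\sigma)=\tau\sigma$ if and only if the drop alignment of $\sigma$ below $\tau$ is the plain left justification with every cell of $\tau$ supported, i.e.\ if and only if $|\sigma|\ge|\tau|$ and $\sigma_j<\tau_j$ for all $j\le|\tau|$. One direction is immediate from Definition~\ref{def:drop-two}: then $k=0$ and the defining formula collapses to $\tau^{(0)}\sigma^{(0)}\sigma^{(1)}=\tau\sigma$. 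For the converse, argue by contrapositive: if the alignment is not of that form then some cell of $\tau$ is unsupported, so $k\ge1$, and I claim $\drop(\tau\sigma)$ and $\tau\sigma$ already disagree at position $j_1:=|\tau^{(0)}|+1$, the position of $x_1$ in $\tau$. They agree on the first $|\tau^{(0)}|$ letters; at position $j_1$ the word $\drop(\tau\sigma)$ carries the first letter of $\tau^{(1)}\cdots\tau^{(k)}\sigma^{(0)}x_1\hat{\sigma}^{(1)}\cdots$, and a short trichotomy settles it. If some $\tau^{(j)}$ with $j\ge1$ is nonempty, that letter is an entry of $\tau$ lying to the right of $x_1$, hence $>x_1$; if all such $\tau^{(j)}$ vanish but $\sigma^{(0)}\ne\varnothing$, that letter is $\sigma_1$, which is $<x_1$ since minimality of $j_1$ gives $\sigma_1<\tau_1<\cdots<\tau_{j_1}$; and if all such $\tau^{(j)}$ vanish and $\sigma^{(0)}=\varnothing$, then $\tau=x_1\cdots x_{|\tau|}$ with every $x_j\le\min(\sigma)$, forcing $\max(\tau)\le\min(\sigma)$, contrary to the descent between the runs. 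In the first two cases $\drop(\tau\sigma)\ne\tau\sigma$, which proves the claim.

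Finally I would assemble everything. Because $\drop_i$ changes $\rho$ only in the factor $\rho^{(i+1)}\rho^{(i)}$, the equality $\drop_i(\rho)=\rho$ is equivalent to $\drop(\rho^{(i+1)}\rho^{(i)})=\rho^{(i+1)}\rho^{(i)}$, hence by the claim to $|\rho^{(i)}|\ge|\rho^{(i+1)}|$ and $\rho^{(i)}_c<\rho^{(i+1)}_c$ for all $c\le|\rho^{(i+1)}|$; quantifying over $1\le i<k$ gives exactly the characterization of $\D(\rho)$ being an increasing Young tableau obtained above. The step I expect to be most delicate is the contrapositive above—confirming that a genuine drop ($k\ge1$) really perturbs the word—since $\drop$ is defined by a rather intricate formula; the saving simplification is to test word-equality at the single position $j_1$ and to use the descent hypothesis $\max(\tau)>\min(\sigma)$ to kill the one degenerate alignment. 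Everything else is bookkeeping about run lengths and column entries.
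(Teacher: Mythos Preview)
Your proof is correct and follows the same approach as the paper: both directions hinge on the equivalence between ``$\D(\rho)$ increasing at rows $i,i+1$'' and ``the drop alignment of $\rho^{(i)}$ below $\rho^{(i+1)}$ has no unsupported cells ($k=0$).'' The paper's proof is a two-sentence sketch that asserts without justification that $k=0$ is the only case in which $\drop_j(\rho)=\rho$; you supply the missing argument by testing word-equality at position $|\tau^{(0)}|+1$, and your use of the descent hypothesis $\max(\tau)>\min(\sigma)$ to exclude the degenerate alignment (all $\tau^{(j)}$ and $\sigma^{(0)}$ empty) is genuinely needed, since in that case $\drop(\tau\sigma)=\tau\sigma$ despite $k\ge1$.
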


\begin{proof}
  If $\D(\rho)$ is increasing, then in Definition~\ref{def:drop-align} we will always left justify $\rho^{(j)}$ with respect to $\rho^{(j+1)}$ resulting in no $x_i$'s in the unique factorization of Proposition~\ref{prop:drop}, and so $\drop_j(\rho) = \rho$ for all $j$. Moreover, this is the only case in which there are no $x_i$'s, and so the only case when $\drop_j(\rho) = \rho$ for all $j$. 
\end{proof}

We apply the maps $\drop_i$ until reaching this terminal state, in which case we have an increasing Young tableau. To see that this is independent of the choice of which rows to drop when, we observe that these maps satisfy the nil-Hecke relations. 

\begin{theorem}
  The maps $\drop_i$ are well-defined on reduced words satisfying 
  \renewcommand{\theenumi}{\roman{enumi}}
  \begin{enumerate}
  \item $\drop_i \circ \drop_i = \drop_i$; \label{i:idem}
  \item $\drop_i \circ \drop_j = \drop_j \circ \drop_i$ for $|i-j|>1$; \label{i:commute}
  \item $\drop_i \circ \drop_{i+1} \circ \drop_i  = \drop_{i+1} \circ \drop_i \circ \drop_{i+1} $. \label{i:braid}
  \end{enumerate}
  \label{thm:braid}
\end{theorem}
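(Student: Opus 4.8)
The plan is to regard $\drop_i$ as a local operation on the descent tableau $\D(\rho)$ --- equivalently, on a reduced word together with a decomposition into increasing rows --- which consults and rewrites only rows $i$ and $i+1$ and copies every other row verbatim. Well-definedness then splits into two checks: that the word produced by Definition~\ref{def:drop-two} is again reduced, which is exactly Lemma~\ref{lem:drop}, and that its two new rows $\tau^{(0)}\cdots\tau^{(k)}$ and $\sigma^{(0)}x_1\hat\sigma^{(1)}\cdots x_k\hat\sigma^{(k)}\sigma^{(k+1)}$ are strictly increasing, which follows from Proposition~\ref{prop:drop}(1)--(3) (these conditions also force each block $\hat\sigma^{(j)}$ to differ from $\sigma^{(j)}$ in at most its first entry, which keeps the bookkeeping light). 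I would dispatch this first.

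For idempotency, relation~\eqref{i:idem}, I would show that a single application of $\drop_i$ already leaves rows $i$ and $i+1$ in a fixed point of the drop alignment, so that by Definition~\ref{def:drop-align} it suffices to verify that the new row $i+1$ is both no longer than, and strictly dominated in every shared column by, the new row $i$. The length inequality holds because each dropped letter $x_j$ moved down, so the new lower row is longer than the old one by $k$ while the new upper row is shorter by $k$; and the domination holds because the dropped letters exactly refill the vacated slots, so the entry of the new row $i$ lying beneath $\tau^{(j)}_m$ is either some $x_j < \tau^{(j)}_1 \le \tau^{(j)}_m$ (by Proposition~\ref{prop:drop}(2)--(3)) or an entry of a lower block $\hat\sigma^{(j')}$ that the same conditions keep below the cell above it. A short check on the landing positions makes this precise; then the factorization of Proposition~\ref{prop:drop} applied to the output has $k=0$, so $\drop_i$ fixes it. Relation~\eqref{i:commute} is then immediate: for $|i-j|>1$ the index pairs $\{i,i+1\}$ and $\{j,j+1\}$ are disjoint, and $\drop_i$ neither reads nor changes rows $j$ and $j+1$.

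The heart of the theorem is the braid relation~\eqref{i:braid}, and I expect this to be the difficult step. Since $\drop_i$ involves only rows $i,i+1$ and $\drop_{i+1}$ only rows $i+1,i+2$, both composites in~\eqref{i:braid} are determined entirely by the three consecutive rows $A=\rho^{(i+2)}$, $B=\rho^{(i+1)}$, $C=\rho^{(i)}$, so one may assume $\rho$ consists of exactly these three rows. By relation~\eqref{i:idem}, the last factor in each composite puts one interface in drop-normal form; I would first show that the earlier factors do not reopen it, so that each composite outputs a three-row tableau with \emph{both} interfaces in drop-normal form --- equivalently, by the discussion surrounding Proposition~\ref{prop:drop-inc}, an increasing Young tableau --- and that is Coxeter--Knuth equivalent to $ABC$ by Lemma~\ref{lem:drop}. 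Since uniqueness of the increasing representative of a Coxeter--Knuth class is not yet available at this point in the paper, I would then prove equality of the two outputs directly: either by evaluating both composites from the formulas of Definition~\ref{def:drop-two} and comparing letter by letter, the cases being indexed by how far each unsupported letter of $A$ cascades (into $B$ only, or through $B$ into $C$) and by whether it lands adjacent to an entry one smaller --- which triggers the $+1$ clause of Proposition~\ref{prop:drop}(3) --- or, more structurally, by factoring both composites through a single canonical ``one unsupported cell at a time'' drop procedure and invoking a diamond lemma for those elementary moves. Either way the target is the common tableau reached by the two length-three drop sequences in the running example, the commuting hexagon of Figure~\ref{fig:drop_braid}.

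The obstacle, and the reason~\eqref{i:braid} has no one-line proof, is the interference of the $\hat\sigma$-increments across the two interfaces: a letter that drops out of $A$ and through $B$ can change which entry of $B$ gets incremented before that entry in turn drops into $C$, so resolving the $A/B$ interface before the $B/C$ interface is, a priori, a genuinely different process from resolving them in the other order. The crux is to prove that the accumulated increments --- and hence the final word --- are insensitive to this order; everything else in the argument is routine bookkeeping.
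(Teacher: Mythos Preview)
Your treatment of well-definedness, idempotency~\eqref{i:idem}, and far commutation~\eqref{i:commute} is fine and matches the paper's argument essentially verbatim: the Drop Lemma gives reducedness, Proposition~\ref{prop:drop} forces the output two rows to be strict columns when left-justified so a second application of $\drop_i$ has no unsupported cells, and disjoint row supports give commutation for $|i-j|>1$.

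For the braid relation~\eqref{i:braid} your plan diverges from the paper and, as written, stops short of a proof. You propose first showing that each triple composite lands on an increasing three-row tableau and then either carrying out a full case analysis of the $\hat\sigma$-increments or invoking a diamond lemma for one-cell drops; you do not execute either route, and the intermediate claim that ``the earlier factors do not reopen'' the already-normalized interface is itself nontrivial (the middle factor can lengthen the shared row, so the column comparison must be rechecked). Neither route is visibly shorter than the direct computation you are trying to avoid.

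The paper takes a different and more economical line. It fixes a single \emph{simultaneous} alignment of the three rows $\tau,\sigma,\rho$ at the outset: first align $\rho$ below $\sigma$ (recording the unsupported cells $x_1,\dots,x_k$ of $\sigma$), then, holding that alignment, align $\sigma$ below $\tau$ (recording the unsupported cells $y_1,\dots,y_l$ of $\tau$). This common coordinate system is the device that tames the interference you flag: each step of either composite can be described in terms of which $x_i$'s and $y_j$'s have already fallen and which cells of $\tau$ have been pulled down to fill gaps. Tracking both composites in these coordinates, the paper gives an intrinsic description of the resulting top and bottom rows --- the top row is $\tau$ with the $y_j$'s removed together with additional cells governed by the drop alignment of $\sigma$ (with $x_i$'s deleted) below $\tau$, and the bottom row is $\rho$ augmented by the $x_i$'s together with cells of $\tau$ governed by the drop alignment of $\rho$ (with $x_i$'s adjoined) below $\tau$ --- and observes that this description is the same for $\drop_1\drop_2\drop_1$ and $\drop_2\drop_1\drop_2$. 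Equality of the middle row then follows. The simultaneous alignment is the organizing idea your outline lacks; it replaces your proposed case split over cascades and increments with a single bookkeeping framework in which both sides are computed at once.
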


\begin{proof}
  By the Drop Lemma, $\rho$ is Coxeter--Knuth equivalent to $\drop_i(\rho)$. In particular, when $\rho$ is reduced, so is $\drop_i(\rho)$, so we may iterate the maps.

  For relation~\eqref{i:idem}, notice that since $x_i < \sigma^{(i)}_1$ for $i=1,\ldots,k$, with notation as in Proposition~\ref{prop:drop}, the columns of $\D(\drop(\tau\sigma))$ are strict when the two rows are left justified. Therefore, by uniqueness of the factorization in Proposition~\ref{prop:drop}, there will be no $x_i$'s for $\drop(\tau\sigma)$. In particular, $\drop$ will act trivially.

  Relation~\eqref{i:commute} follows from the fact that $\drop_k$ considers only rows $k$ and $k+1$, so for $|i-j|>1$, the sets of indices $\{i,i+1\}$ and $\{j,j+1\}$ are disjoint. 
    
  Finally, for relation~\eqref{i:braid} it is enough to consider a three term factorization, say $\tau\sigma\rho$ with each of $\tau, \sigma, \rho$ increasing and the concatentation reduced. We must show that $\drop_1 \drop_2 \drop_1(\tau\sigma\rho)$ is equal to $\drop_2 \drop_1 \drop_2(\tau\sigma\rho)$. Align $\rho$ below $\sigma$ with letters of unsupported letters of $\sigma$ denoted $x_1,\ldots,x_k$, and then, maintaining that alignment, align $\sigma$ below $\tau$ with letters of unsupported letters of $\tau$ denoted $y_1,\ldots,y_{l}$; call this the \emph{initial alignment}. For a generic example, see Fig.~\ref{fig:triple_drop}. We consider blocks of the initial alignment with respect to the factorization of $\tau$ that put each $y_i$ at the end of a block as indicated.

  \begin{figure}[ht]
  \begin{displaymath}
    \arraycolsep=1.5pt
    \begin{array}{rcllll|lllll|l|ll|lll}
      \tau   & : & \times & \times & \times & y_1 & \times & \times & \times & \times & y_2 & y_3 & \times & y_4 & \times & & \\
      \sigma & : & \times & \times & x_1    &     & x_2    & \times & x_3     & \times &     &     & \times & & \times & & \\
      \rho   & : & \times & \times &        &     &        & \times &        &  \times &     &     & \times & & \times & \times & \times 
    \end{array} 
  \end{displaymath}
  \caption{\label{fig:triple_drop}An example of the drop alignment of $\rho$ below $\sigma$ below $\tau$.}
\end{figure}

  Consider first $\drop_1 \drop_2 \drop_1(\tau\sigma\rho)$. Applying $\drop_1(\tau\sigma\rho) = \tau\drop(\sigma\rho)$ results in all $x_i$'s moving down to row $1$. When subsequently applying $\drop_2$ to the result, in addition to the $y_i$'s dropping to row $2$, for every $x_i$, letting $j_i$ be smallest such that $y_{j_i}$ is right of $x_i$, some entry in row $3$ weakly right of $x_i$ and strictly left of $y_{j_i}$ will also drop to row $2$. The final application of $\drop_1$ now pulls down one cell from row $2$ for each $y_i$, and this will either be $y_i$ or some entry originally from $\tau$ that is weakly right of $y_i$. In particular, for $\drop_1 \drop_2 \drop_1(\tau\sigma\rho)$, we have
  \begin{itemize}
  \item the top row has no $y_i$'s and loses additional cells based on the drop alignment of $\sigma$ with $x_i$'s removed below $\tau$;
  \item the bottom row has all $x_i$'s and additional cells originally from $\tau$ based on the drop alignment of $\rho$ with $x_i$'s added below $\tau$.
  \end{itemize}

  Consider next $\drop_2 \drop_1 \drop_2(\tau\sigma\rho)$. Applying $\drop_2(\tau\sigma\rho) = \drop(\tau\sigma)\rho$ results in all $y_i$'s moving down to row $2$. When subsequently applying $\drop_1$ to the result, every $x_i$ will move down and, in addition, one cell originally from $\tau$ for every $y_i$ and this cell must be weakly right of $y_i$. The final application of $\drop_2$ will pull down one cell from $\tau$ for every $x_i$ for which did not sit directly below some $y_{j_i}$. In particular, the description above holds for the top and bottom rows of $\drop_2 \drop_1 \drop_2(\tau\sigma\rho)$, and so the two must correspond for all three rows.
\end{proof}

\begin{definition}
  For a reduced word $\rho$, the \emph{drop of $\rho$}, denoted by $\drop(\rho)$, is defined as follows: Let $\rho^{(k)}$ denote the $k$th row of $\D(\rho)$. Choose any $k$ such that either $\ell(\rho^{(k+1)}) > \ell(\rho^{(k)})$ or $\rho^{(k+1)}_j \leq \rho^{(k)}_j$ for some $j \leq \ell(\rho^{(k)})$. Replace $\rho^{(k+1)}\rho^{(k)}$ with $\drop(\rho^{(k+1)}\rho^{(k)})$. Repeat until the result is an increasing Young tableau.
  \label{def:drop}
\end{definition}

By Theorem~\ref{thm:braid}, the definition of $\drop(\rho)$ is independent of the order in which rows are consolidated, and so we have the following.

\begin{theorem}
  For a reduced word $\rho$, $\drop(\rho)$ is the unique Coxeter--Knuth equivalent reduced word for which the descent tableau is increasing and of partition shape. 
  \label{thm:drop}
\end{theorem}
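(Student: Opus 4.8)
The plan is to split the statement into existence --- that $\drop(\rho)$ is a well-defined reduced word, Coxeter--Knuth equivalent to $\rho$, whose descent tableau is increasing of partition shape --- and uniqueness within the Coxeter--Knuth class.

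For existence, first note that the Drop Lemma (Lemma~\ref{lem:drop}), applied at each step of Definition~\ref{def:drop}, guarantees that every word produced is Coxeter--Knuth equivalent to $\rho$, hence reduced, so the procedure stays inside $\R(w)$. By Theorem~\ref{thm:braid} the operators $\drop_i$ satisfy the idempotent, commutation, and braid relations, so they realize an action of the (finite) $0$-Hecke monoid on $\R(w)$; standard properties of this monoid then give that iterating the $\drop_i$ terminates --- a nontrivial application strictly increases the length of the accumulated monoid element, which is bounded --- and that the terminal word is independent of the order of consolidation (it is the image of $\rho$ under the longest element of the monoid). At the terminal word every $\drop_i$ acts trivially, so Proposition~\ref{prop:drop-inc} identifies its descent tableau as an increasing Young tableau, which by definition has partition shape. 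This establishes existence.

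For uniqueness, observe that any reduced word $\sigma$ whose descent tableau is already increasing of partition shape is fixed by every $\drop_i$ (Proposition~\ref{prop:drop-inc}), so $\drop(\sigma)=\sigma$; hence it suffices to show that $\drop$ is constant on Coxeter--Knuth classes, for then $\sigma = \drop(\sigma) = \drop(\rho)$ whenever $\sigma$ lies in the class of $\rho$. As the classes are the connected components of the graph with edges $\CK_i$, this amounts to proving
\[ \drop \circ \CK_i = \drop \qquad \text{for every } i. \]
Now $\CK_i$ is the identity or one of $\braid_i$, $\swap_{i-1}$, $\swap_i$, and in each nontrivial case it changes $\rho$ only at the three consecutive positions $i-1,i,i+1$, which straddle the boundary of two adjacent rows of $\D(\rho)$. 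The plan is to exploit the order-independence of Theorem~\ref{thm:braid} to perform the drops near that boundary first, reducing the identity to an equality of drops of two words differing only in a bounded window, and then to check the remaining (finitely many) configurations by hand, using Lemma~\ref{lem:slide} and the explicit description of $\drop$ in Definition~\ref{def:drop-two}.

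The hard part will be this last identity, $\drop \circ \CK_i = \drop$. The delicate point is that $\CK_i$ need not preserve the descent composition, so applying it can shift the rows of cells lying far from positions $i-1,i,i+1$; consequently the ``localization'' above is not automatic and must be argued carefully, following how a drop performed near the altered window interacts with the re-run-decomposition of the remainder of the word --- in spirit, mirroring the inductive reduction in the proof of the Drop Lemma. (One could instead obtain uniqueness indirectly from the Edelman--Greene correspondence, Theorem~\ref{thm:EG}, together with Theorem~\ref{thm:deg-red}, which force each Coxeter--Knuth class to contain exactly one increasing partition-shape tableau with reduced reading word; but since the object of this section is precisely to sidestep Edelman--Greene insertion, the direct route is the one to pursue.)
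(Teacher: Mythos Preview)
Your existence argument matches the paper's: both invoke Theorem~\ref{thm:braid} for order-independence of the $\drop_i$ and the Drop Lemma for Coxeter--Knuth preservation, with Proposition~\ref{prop:drop-inc} identifying the terminal state. You spell out termination via the $0$-Hecke monoid more carefully than the paper, which simply asserts it.

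For uniqueness the situations diverge. The paper's proof is two sentences: it cites Theorem~\ref{thm:braid} for well-definedness and Lemma~\ref{lem:drop} for staying in the class, then writes ``and so the theorem follows.'' It never verifies that no second increasing partition-shape representative can occur. The identity you single out as the hard part, $\drop\circ\CK_i=\drop$, is exactly what is needed and exactly what the paper omits; you have located a genuine gap in the paper's argument rather than in your own understanding.

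Your plan to establish that identity by localizing near positions $i-1,i,i+1$ and checking finitely many configurations is plausible but, as you already concede, not automatic: a single $\CK_i$ can change the run decomposition globally, so the ``drop near the boundary first'' reduction requires real bookkeeping about how the alignment of the two affected rows propagates through subsequent drops. What you have written is a strategy, not a proof, and carrying it out is closer in spirit to the bumping-path analysis the section aims to avoid than to the clean lemma-chaining elsewhere in the section. The Edelman--Greene route you mention (via Theorem~\ref{thm:cox-knu}) does settle uniqueness immediately, and is the only fully worked alternative on the table; your instinct that invoking it here undercuts the section's stated purpose is correct.
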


\begin{proof}
  By Theorem~\ref{thm:braid}, the end result is independent of the order in which the $\drop_k$ are applied, and so $\drop(\rho)$ is well-defined. By Lemma~\ref{lem:drop}, each application of $\drop_k$ stays within the Coxeter--Knuth class, and so the theorem follows.
\end{proof}

Theorem~\ref{thm:drop} shows that each Coxeter--Knuth equivalence class has a unique representative $\pi$ characterized by the property $\D(\pi)$ is an increasing Young tableau of partition shape. In particular, combining this with Theorem~\ref{thm:deg-red}, this gives a new and simplified proof of the following equivalent formulation of Theorem~\ref{thm:EG}.

\begin{corollary}
  For $w$ a permutation, we have
  \begin{equation}
    \stanley_{w} = \sum_{\substack{\rho\in\R(w) \\ \D(\rho) \ \text{increasing}}} s_{\Des(\rho)} .
    \label{e:stanley-new}
  \end{equation}
    \label{cor:drop}
\end{corollary}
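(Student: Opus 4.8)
The plan is to combine Theorem~\ref{thm:drop} with Theorem~\ref{thm:deg-red}. By Theorem~\ref{thm:deg-red}, $\stanley_w = \sum_{C} s_{\lambda(C)}$, a sum over the Coxeter--Knuth classes $C$ of $\R(w)$ of one Schur function each; by Theorem~\ref{thm:drop}, each class $C$ contains a unique reduced word $\pi_C$ whose descent tableau $\D(\pi_C)$ is an increasing Young tableau, and every reduced word with increasing (Young-tableau) descent tableau arises this way. Writing $C_\rho$ for the class of $\rho$, it follows that $\stanley_w$ is the sum of $s_{\lambda(C_\rho)}$ over those $\rho \in \R(w)$ with $\D(\rho)$ an increasing Young tableau, and the Corollary reduces to the identity $\lambda(C_\rho) = \Des(\rho)$ for each such $\rho$. (Here ``$\D(\rho)$ increasing'' in \eqref{e:stanley-new} must be read as ``$\D(\rho)$ an increasing Young tableau'', in particular of partition shape: by the remark after Definition~\ref{def:d-red}, $\Des(\rho)$ lists the row lengths of $\D(\rho)$ from the bottom, so $s_{\Des(\rho)}$ only makes sense when $\D(\rho)$ is a genuine Young diagram.)

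To identify $\lambda(C_\rho)$ I would establish that the reduced words $\rho$ with $\D(\rho)$ an increasing Young tableau are exactly the reduced words of the form $\row(T)$ with $T$ an increasing Young tableau, and that then $\Des(\rho)$ is the shape of $T$. One inclusion is immediate from Definition~\ref{def:d-red}: if $\D(\rho)$ is an increasing Young tableau, then $\rho = \row(\D(\rho))$ and $\Des(\rho)$ is its shape. For the converse, let $T$ be an increasing Young tableau of shape $\lambda$ with $\row(T)$ reduced; I claim $\D(\row(T)) = T$. The key point is that, since in $T$ each entry strictly exceeds the entry directly below it, the largest entry of any row exceeds the smallest entry of the row beneath; hence, reading the rows of $T$ from the top to form $\row(T)$, there is a decrease at every row boundary, so the run decomposition of $\row(T)$ is precisely the list of rows of $T$, $\Des(\row(T)) = \lambda$, and re-running the insertion of Definition~\ref{def:d-red} rebuilds $T$ one row at a time from the top. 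Together the two inclusions match the index sets and the summands of \eqref{e:stanley-new} and \eqref{e:stanley-schur}, so \eqref{e:stanley-new} follows from Theorem~\ref{thm:EG}, and in particular $\lambda(C_\rho) = \Des(\rho)$.

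I expect the only step that requires genuine care to be the converse inclusion, specifically the verification that $\D \circ \row$ is the identity on increasing Young tableaux with reduced reading word: this hinges on tracking both the descent convention for reduced words and the bottom-to-top indexing of rows in $\D(\rho)$ and in Young tableaux, which is easy to get turned around. Everything else is bookkeeping together with appeals to Theorems~\ref{thm:deg-red} and~\ref{thm:drop}. If one wishes to avoid invoking Theorem~\ref{thm:EG} altogether, the same correspondence still does the work, with Theorem~\ref{thm:deg-red} supplying the Schur expansion abstractly; the only residual point an insertion-free argument must then address is that the Coxeter--Knuth class of $\row(T)$ is the one whose Schur function is $s_{\lambda}$ with $\lambda$ the shape of $T$ — equivalently, that the dual equivalence graph of that class has shape $\lambda$ — which is exactly the statement $\D(\pi_{C}) = T$ packages.
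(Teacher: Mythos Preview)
Your argument is correct and matches the paper's approach: the paper's entire proof is the sentence preceding the corollary, which combines Theorem~\ref{thm:drop} with Theorem~\ref{thm:deg-red} and calls the result ``an equivalent formulation of Theorem~\ref{thm:EG}.'' You are more explicit than the paper on two points. First, you verify carefully that the set $\{\rho : \D(\rho)\text{ is an increasing Young tableau}\}$ coincides with $\{\row(T) : T\text{ an increasing Young tableau with }\row(T)\in\R(w)\}$ under $\rho\leftrightarrow\D(\rho)$, with $\Des(\rho)$ equal to the shape of $T$, so that \eqref{e:stanley-new} is literally the same sum as \eqref{e:stanley-schur}. Second, you correctly flag that the insertion-free route via Theorems~\ref{thm:deg-red} and~\ref{thm:drop} alone still owes the identification $\lambda(C_\rho)=\Des(\rho)$, which the paper leaves implicit in the dual equivalence machinery. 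One caution worth noting: by invoking Theorem~\ref{thm:EG} to make that identification, your primary argument is not independent of Edelman--Greene insertion, which is precisely what this section aims to bypass; as a proof of the corollary statement in isolation, however, it is perfectly valid, and your final paragraph accurately names what a fully self-contained argument must supply.
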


\subsection{Yamanouchi key tableaux}
\label{sec:positive-yamanouchi}

In order to give an explicit, direct formula for the Demazure expansion of a Schubert polynomial, we begin by characterizing the analogs of increasing Young tableaux that will give our canonical representatives for each Coxeter--Knuth equivalence class in the polynomial setting.

\begin{lemma}
  Given a weak composition $a$, there exists a unique $T_a\in\SKT(a)$ for which $\des(T_a) = a$. Moreover, for $T\in\SKT(a)$ non-virtual, we have $\des(T) \geq a$, where we define $b \geq a$ if and only if $b_1+\cdots+b_k \geq a_1+\cdots+a_k$ for all $k$.
  \label{lem:yam}
\end{lemma}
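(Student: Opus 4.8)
The plan is to exhibit an explicit tableau $T_a$, verify directly that $T_a\in\SKT(a)$ and $\des(T_a)=a$, and then prove that every other non-virtual tableau in $\SKT(a)$ has weak descent composition strictly dominating $a$; uniqueness of $T_a$ falls out of the dominance argument. To construct $T_a$ (writing $n=|a|$), let $i_1<i_2<\cdots<i_m$ be the rows carrying the nonzero parts of $a$, and fill the key diagram of $a$ by placing the largest block $\{n-a_{i_m}+1,\ldots,n\}$ into row $i_m$ in decreasing order, the next block of $a_{i_{m-1}}$ consecutive integers into row $i_{m-1}$ in decreasing order, and so on, ending with $\{1,\ldots,a_{i_1}\}$ in row $i_1$.

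First I would check $T_a\in\SKT(a)$: rows decrease by construction, and since entries in higher rows are larger than entries in lower rows, whenever one entry lies above another in the same column it is the larger of the two, so the hypothesis of the condition in Definition~\ref{def:key-tab} is never met. Then I would compute $\des(T_a)$ straight from Definition~\ref{def:des-key}. Inside a row, consecutive values $v,v+1$ have $v+1$ strictly to the left of $v$, so $v$ is not a descent; at a block boundary the top value $v$ of a block has $v+1$ sitting in the next nonzero row and hence weakly right of $v$, so $v$ is a descent. Thus the descent runs of $n\cdots21$ are exactly the row-blocks, $\tau^{(j)}$ being the block in row $i_j$, so its run parameter is $t_j=i_j$; since $i_j\le i_{j+1}-1$, the recursion $\hat t_m=t_m$, $\hat t_j=\min(t_j,\hat t_{j+1}-1)$ returns $\hat t_j=i_j$ for all $j$, whence $\des(T_a)_{i_j}=|\tau^{(j)}|=a_{i_j}$ and all remaining parts are $0$, i.e.\ $\des(T_a)=a$ and $T_a$ is non-virtual.

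For the dominance statement, fix a non-virtual $T\in\SKT(a)$ with run decomposition $(\tau^{(k)}\mid\cdots\mid\tau^{(1)})$ and parameters $t_i,\hat t_i$. Since the runs partition $\{1,\ldots,n\}$ into consecutive blocks with $1\in\tau^{(1)}$, we have $\tau^{(1)}\cup\cdots\cup\tau^{(i)}=\{1,\ldots,N_i\}$ for $N_i=|\tau^{(1)}|+\cdots+|\tau^{(i)}|$, and the $\hat t_i$ are strictly increasing because $\hat t_i\le \hat t_{i+1}-1$. Fixing $\ell$ and letting $p$ be the number of $i$ with $\hat t_i\le\ell$, we get $\sum_{j\le\ell}\des(T)_j=\sum_{i\le p}|\tau^{(i)}|=N_p$. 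The heart of the argument is that every entry of $T$ in rows $1,\ldots,\ell$ is at most $N_p$: if such an entry $v$ lies in row $r\le\ell$ and belongs to $\tau^{(i)}$, then $\hat t_i\le t_i\le r\le\ell$, the middle inequality because the row parameter governing a run does not exceed the index of any row that run meets, so $i\le p$ and $v\le N_i\le N_p$. As rows $1,\ldots,\ell$ contain exactly $a_1+\cdots+a_\ell$ cells, this gives $a_1+\cdots+a_\ell\le N_p=\des(T)_1+\cdots+\des(T)_\ell$ for every $\ell$, that is, $\des(T)\ge a$.

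Finally, if $\des(T)=a$ then all of these inequalities are equalities, so for each $\ell$ the entries of $T$ in rows $1,\ldots,\ell$ form precisely $\{1,\ldots,a_1+\cdots+a_\ell\}$; reading this off at $\ell=i_1,i_2,\ldots$ forces row $i_j$ to contain exactly the block $\{a_{i_1}+\cdots+a_{i_{j-1}}+1,\ldots,a_{i_1}+\cdots+a_{i_j}\}$, and, as its entries must decrease, $T=T_a$. The step I expect to be the main obstacle is the middle inequality used in the dominance argument: extracting from Definition~\ref{def:des-key} together with the column condition of Definition~\ref{def:key-tab} that, in a non-virtual standard key tableau, the parameter attached to a run $\tau^{(i)}$ (and hence $\hat t_i$) is at most the index of every row meeting $\tau^{(i)}$. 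This compatibility between the run decomposition and the shape is exactly what makes the count go through, and the virtuality hypothesis is present precisely to discard the tableaux where it fails.
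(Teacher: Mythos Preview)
Your construction of $T_a$ and the verification that $\des(T_a)=a$ are correct and coincide with the paper's. The divergence is in the dominance assertion $\des(T)\ge a$. The paper does not argue combinatorially at all: it simply invokes the known upper unitriangularity of Demazure characters with respect to the monomial basis. Since $x^{\des(T)}$ is the leading monomial of $\fund_{\des(T)}$ and $\key_a=\sum_{T}\fund_{\des(T)}$, every such monomial must satisfy $\des(T)\ge a$. That one-line appeal to an external algebraic fact is the entire argument.

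Your route is more ambitious---a self-contained combinatorial proof---but the step you yourself flag is a genuine gap. The inequality $t_i\le r$ for every row $r$ meeting $\tau^{(i)}$ does not follow from Definitions~\ref{def:des-key} and~\ref{def:key-tab} alone; in the virtual tableau of Figure~\ref{fig:SKT} the run $\{3,2\}$ has its leading entry $3$ in row~$4$ while $2$ sits in row~$2$. So non-virtuality must enter substantively, and you have not shown how. Concretely: if $M=\max(\tau^{(i)})$ lies in row $t_i$ and some $v$ in the same run lies in a lower row $r<t_i$, the entry at $(r,\text{col}(M))$ is forced to exceed $M$ and the SKT column condition produces a larger entry to its right, but turning this into either a contradiction or the conclusion $\hat t_1\le 0$ requires a further inductive or structural argument that is missing. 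Until that is supplied the dominance proof is incomplete. (Your uniqueness argument via the equality case is, on the other hand, cleaner than the paper's terse claim about $\flatten(\des(T))$, provided the dominance step is secured.)

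One further caution: the literal formula $t_i=\min(\tau^{(i)})$ in Definition~\ref{def:des-key} is a transcription slip from the reduced-word setting; consistency with Figure~\ref{fig:SKT} forces $t_i$ to be the \emph{row index} of the largest entry of $\tau^{(i)}$. You appear to use the correct interpretation, but you should say so explicitly.
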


\begin{proof}
  We construct $T_a$ by filling the key diagram for $a$ with entries $n, n-1,\ldots,2,1$ left to right along rows beginning with the top. Then rows decrease left to right and columns decrease top to bottom, showing $T_a\in\SKT(a)$. Definition~\ref{def:des-key} will have $\hat{t}_i = t_i$ for all $i$, so $T_a$ will indeed have $\des(T_a)=a$. Uniqueness follows since any alternative filling $T \in\SKT(a)$ necessarily has $\flatten(\des(T)) \neq \flatten(\des(T_a)) = \flatten(a)$. Finally, the latter condition follows from the upper unitriangularity of Demazure characters with respect to monomials.
\end{proof}

Thus it makes sense to consider the set of reduced words that subordinate their Coxeter--Knuth equivalence classes.

\begin{definition}
  A reduced word $\rho$ is \emph{Yamanouchi} if for any non-virtual Coxeter-Knuth equivalent reduced word $\sigma$, we have $\des(\sigma) \geq \des(\rho)$. Denote the set of Yamanouchi reduced words for $w$ by $\Yam(w)$.
  \label{def:yam}
\end{definition}

For example, there are two Yamanouchi reduced words for the permutation $153264$, namely $(5,3,2,3,4)$ and $(3,5,2,3,4)$. Notice as well that we may reformulate the Demazure expansion for our running example as 
\begin{displaymath}
  \schubert_{153264} = \key_{(0,3,1,0,1)} + \key_{(0,3,2,0,0)} = \key_{\des(5,3,2,3,4)} + \key_{\des(3,5,2,3,4)}.
\end{displaymath}
Moreover, this expansion holds in general, giving the following formula.

\begin{theorem}
  Given a permutation $w$, we have 
  \begin{equation}
    \schubert_w = \sum_{\rho \in \Yam(w)} \key_{\des(\rho)} .
  \end{equation}
  \label{thm:yam}
\end{theorem}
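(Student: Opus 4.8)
## Proof Proposal

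The plan is to combine Theorem~\ref{thm:wdeg-red}, which tells us that each Coxeter--Knuth equivalence class has fundamental slide generating polynomial equal to a single Demazure character, with a structural claim that pins down precisely which Demazure character. By Definition~\ref{def:schubert}, $\schubert_w = \sum_{\rho\in\R(w)} \fund_{\des(\rho)}$, and partitioning $\R(w)$ into Coxeter--Knuth classes $C_1,\ldots,C_m$ gives $\schubert_w = \sum_{j} \left( \sum_{\rho\in C_j} \fund_{\des(\rho)} \right)$. Theorem~\ref{thm:wdeg-red} says each inner sum equals $\key_{a_j}$ for some weak composition $a_j$. So the entire content of Theorem~\ref{thm:yam} is the identification: \emph{each Coxeter--Knuth class $C_j$ contains exactly one Yamanouchi reduced word $\rho_j$, and $a_j = \des(\rho_j)$}.

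First I would establish that the indexing composition $a_j$ of the class $C_j$ satisfies $a_j = \des(\rho)$ for some (necessarily non-virtual) $\rho \in C_j$. This should follow from Lemma~\ref{lem:yam}: expanding $\key_{a_j} = \sum_{b}\, c_{a_j,b}\,\fund_b$ in the fundamental slide basis, the upper-unitriangularity built into Lemma~\ref{lem:yam} (the statement $\des(T)\ge a$ with equality achieved uniquely at $T_{a}$) means the $\fund$-expansion of $\key_{a_j}$ has a unique $\preceq$-minimal term, namely $\fund_{a_j}$ itself, appearing with coefficient $1$. Matching this against $\sum_{\rho\in C_j}\fund_{\des(\rho)}$, the $\preceq$-minimal weak composition among $\{\des(\rho):\rho\in C_j,\ \rho\text{ non-virtual}\}$ must equal $a_j$, and it is attained by a unique reduced word in $C_j$ (coefficient $1$). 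That unique word is then, by Definition~\ref{def:yam}, precisely the Yamanouchi word of the class — since being $\preceq$-minimal within one's own Coxeter--Knuth class is exactly the defining property. Conversely every Yamanouchi word arises this way, one per class, giving the bijection $\Yam(w) \leftrightarrow \{C_1,\ldots,C_m\}$ with $\rho \mapsto \key_{\des(\rho)}$ being the class's contribution. Summing over classes yields the theorem.

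The step I expect to be the main obstacle is making the ``unique $\preceq$-minimal term with coefficient $1$'' argument fully rigorous on \emph{both} sides simultaneously. On the Demazure side this is the unitriangularity invoked in Lemma~\ref{lem:yam}, which is standard. On the reduced-word side one must rule out the possibility that two distinct reduced words in the same Coxeter--Knuth class share the $\preceq$-minimal weak descent composition, which would force that minimal $\fund$-term to appear with coefficient $\ge 2$, contradicting unitriangularity; this is clean once the matching of the two expansions is set up, but it requires care that $\preceq$ is a genuine partial order in which ``minimum'' is well-defined (it is: $b\ge a$ iff all partial sums dominate, and two weak compositions of the same total with the same flattening that are mutually $\le$ are equal). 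A secondary subtlety is virtual words: one must note that virtual $\rho$ contribute $\fund_\varnothing = 0$ and so are invisible to this analysis, consistent with $\des(\rho)=\varnothing$ meaning $\rho\notin\Yam(w)$ effectively — though strictly Definition~\ref{def:yam} quantifies only over \emph{non-virtual} Coxeter--Knuth equivalent words, so one should check a Yamanouchi word is itself automatically non-virtual (otherwise $\des(\rho)=\varnothing$ and $\key_\varnothing$ would need interpreting); this follows because each class has a non-virtual member (its contribution to $\schubert_w$ is a nonzero Demazure character), and that member's weak descent composition is $\preceq$ everything, in particular the class cannot be ``all virtual.''

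Once these points are in hand the proof is short; I would present it as: (1) partition $\R(w)$ into Coxeter--Knuth classes and apply Theorem~\ref{thm:wdeg-red}; (2) use Lemma~\ref{lem:yam} to identify the index of each class as the $\preceq$-minimum weak descent composition over its non-virtual members, attained uniquely; (3) observe this unique attaining word is exactly the Yamanouchi word of the class, establishing a bijection between $\Yam(w)$ and the classes; (4) sum.
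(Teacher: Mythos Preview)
Your proposal is correct and follows essentially the same approach as the paper's proof: partition $\R(w)$ into Coxeter--Knuth classes, invoke Theorem~\ref{thm:wdeg-red} (equivalently \cite{Ass-W}, Theorem~3.33) so that each class contributes a single $\key_{a_j}$, and then use Lemma~\ref{lem:yam} to identify $a_j$ as the unique $\preceq$-minimal weak descent composition in the class, which is by definition the Yamanouchi word. The paper's proof is simply a terser version of the same argument, and your additional remarks on virtual words and the coefficient-$1$ matching are careful elaborations of details the paper leaves implicit.
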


\begin{proof}
  By \cite{Ass-W}(Theorem~3.33), the fundamental slide generating polynomial of a Coxeter--Knuth equivalence class is a single Demazure character. Therefore by Lemma~\ref{lem:yam}, each Coxeter--Knuth equivalence class has a unique element whose weak descent composition is dominated by every other element of the class, and so every Coxeter--Knuth equivalence class contains a unique Yamanouchi reduced word. By Lemma~\ref{lem:yam} again, the weak descent composition of the Yamanouchi reduced word indexes the Demazure character corresponding to the class. 
\end{proof}

The formula in Theorem~\ref{thm:yam} is still indirect since the definition of Yamanouchi requires consideration of the entire Coxeter--Knuth equivalence class. In order to avoid searching entire classes to find the Yamanouchi reduced words, we present an algorithm by which they can be constructed from the increasing Young tableaux with reduced reading words. Beginning with an increasing Young tableau, we raise letters from lower rows while staying within the same Coxeter--Knuth class by inverting the $\drop$ map from Definition~\ref{def:drop}. To begin, we must align.

\begin{definition}
  Given two increasing words $\sigma,\tau$ of lengths $s,t$, respectively, define the \emph{lift alignment of $\tau$ above $\sigma$} as follows: If $\tau_{t-i+1} \geq \sigma_{s-i+1}$ for all $i\leq\min(s,t)$, then right justify $\tau$ with respect to $\sigma$. Otherwise, set $i_1$ to be the minimum index such that $\tau_{t-i+1} < \sigma_{s-i+1}$, align $\tau_{t-i_1+2}\cdots\tau_{t}$ directly above $\sigma_{s-i+2}\cdots\sigma_{s}$ and iterate the process with the lift alignment of $\tau_{1}\cdots\tau_{t-i_1+1}$ above $\sigma_{1}\cdots\sigma_{s-i_1}$.
  \label{def:lift-align}
\end{definition}

Visually, right justify $\tau$ above $\sigma$ and, from right to left, for each not weakly strict column, slide entries of $\tau$ from that column onward left by one position. For example, Fig.~\ref{fig:drop-ex-2} shows the lift alignments for two pairs of increasing words.

Parallel to the drop case, if there are $k$ instances in the lift alignment of $\tau$ above $\sigma$ where a cell of $\sigma$ has no cell below it, then we denote these cells of $\sigma$ as $x_1,\ldots,x_k$ and factor $\sigma=\sigma^{(1)}x_1\sigma^{(2)}\cdots x_k\sigma^{(k+1)}$ and, correspondingly, $\tau=\tau^{(0)}\tau^{(1)}\cdots\tau^{(k+1)}$ as shown in Fig.~\ref{fig:lift_factor}. When the concatentation $\tau\sigma$ is a reduced word, then this factorization has the following properties.

\begin{figure}[ht]
  \begin{displaymath}
    \arraycolsep=1.5pt
    \begin{array}{lllllll}
      \tau^{(0)} & \tau^{(1)} & & \tau^{(2)} & \cdots & & \tau^{(k+1)}  \\
      & \sigma^{(1)} & x_1 & \sigma^{(2)} & \cdots & x_k & \sigma^{(k+1)} 
    \end{array} 
  \end{displaymath}
  \caption{\label{fig:lift_factor}An illustration of the lift alignment of $\tau$ above $\sigma$ and corresponding factorizations of $\sigma$ and $\tau$.}
\end{figure}

The following analog of Proposition~\ref{prop:drop} has a completely analogous proof.

\begin{proposition}
  Given two increasing words $\tau,\sigma$ such that $\tau\sigma$ is reduced, there is a unique factorization $\tau=\tau^{(0)}\tau^{(1)}\cdots \tau^{(k+1)}$ and $\sigma=\sigma^{(1)}x_1\cdots\sigma^{(k)}x_k\sigma^{(k+1)}$, with some $\tau^{(i)}$ or $\sigma^{(i)}$ possibly empty, such that
  \begin{enumerate}
  \item $\ell(\tau^{(j)}) = \ell(\sigma^{(j)})$ for $j=1,\ldots,k+1$;
  \item $\tau^{(j)}_{\ell(\tau^{(j)})} < x_j$ for $j=1,\ldots,k$; 
  \item $\tau^{(j)}_i \geq \sigma^{(j)}_i$ for $j=1,\ldots,k+1$ with equality only if $j>1$ and $\tau^{(j)}_{1} = x_{j-1}-1$.
  \end{enumerate}  
  \label{prop:lift}
\end{proposition}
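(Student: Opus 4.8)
The plan is to transcribe the proof of Proposition~\ref{prop:drop} with the reading direction reversed throughout: where that argument scans words and columns from the left, one here scans from the right, and the two roles of ``overhang'' are interchanged so that the picture matches Figure~\ref{fig:lift_factor}. First I would check that the recursion of Definition~\ref{def:lift-align} is well-defined: since $i_1\ge 1$ at every stage, each recursive call is applied to a strictly shorter prefix of $\sigma$, so the procedure terminates and returns the advertised factorization $\tau=\tau^{(0)}\tau^{(1)}\cdots\tau^{(k+1)}$, $\sigma=\sigma^{(1)}x_1\cdots x_k\sigma^{(k+1)}$; this output is the unique such factorization because the algorithm is deterministic and the cells $x_1,\ldots,x_k$ are forced to be exactly those entries of $\sigma$ left with no cell of $\tau$ above them. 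Granting the factorization, conditions (1) and (2) together with the inequality in (3) should read off immediately from the construction, just as in the drop case: (1) holds since $\tau^{(j)}$ is by definition the block of $\tau$ placed over $\sigma^{(j)}$; the inequality $\tau^{(j)}_i\ge\sigma^{(j)}_i$ holds since every column inside a block is a ``good'' column selected by the minimality of the relevant $i_1$; and (2) holds since $x_j$ is the $\sigma$-entry of the first bad column encountered in its subproblem, while $\tau^{(j)}_{\ell(\tau^{(j)})}$ is the $\tau$-entry of that very column, giving $\tau^{(j)}_{\ell(\tau^{(j)})}<x_j$.

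The one place the hypothesis that $\tau\sigma$ is reduced should be needed is the ``equality only if'' clause of (3), and I would handle it exactly as in Proposition~\ref{prop:drop}. Assuming $\tau^{(j)}_i=\sigma^{(j)}_i$ and writing $c$ for this common value, minimality of $i_1$ and the fact that $\tau$ and $\sigma$ are increasing force the letters of $\tau\sigma$ lying strictly between the two occurrences of $c$ to split into an initial run of letters $>c$ (the tail of $\tau$) followed by a run of letters $<c$ (an initial segment of $\sigma$, which carries along the intervening $x$'s since $\sigma$ is increasing). Using commutations---the analog of Lemma~\ref{lem:slide} run from the opposite end---one then slides the two copies of $c$ toward each other within the Coxeter--Knuth class of $\tau\sigma$; the only letter that can obstruct this is the $\sigma$-letter immediately preceding $\sigma^{(j)}$, namely $x_{j-1}$ when $j>1$, and it obstructs precisely when it is consecutive to $\tau^{(j)}_1$. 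In any other situation the slide goes through and produces two adjacent equal letters in a word Coxeter equivalent to $\tau\sigma$, contradicting reducedness; in particular $j=1$ is impossible (there is no preceding $x$), and for $j>1$ one is pushed into the stated exceptional configuration.

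The hard part will be making this last paragraph precise. In Proposition~\ref{prop:drop} the obstructing ``braid'' pattern lived entirely inside the single increasing word $\tau$, whereas here the separating letters $x_1,\ldots,x_k$ reside in $\sigma$ and are interleaved with the blocks, so one must keep careful track of how each $x_{j-1}$ enters (or blocks) the chain of commutations and isolate the unique configuration in which equality can survive. That bookkeeping is routine but orientation-sensitive; once it is settled, combining it with the structural observations above delivers all three conditions, hence the proposition.
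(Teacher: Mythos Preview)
Your approach is exactly the paper's: the paper offers no separate proof, only the sentence ``The following analog of Proposition~\ref{prop:drop} has a completely analogous proof.'' So on the level of strategy you are aligned.

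Two points where your transcription needs tightening. First, the symmetry between drop and lift is not perfect at the level of strict versus weak inequalities. In the drop alignment a ``bad'' column is one with $\tau_j\le\sigma_j$, so good columns are automatically strict and the only equality case is at the boundary (Proposition~\ref{prop:drop}(3)). In the lift alignment a bad column is one with $\tau<\sigma$ strictly, so good columns are only weakly ordered and equality can occur \emph{inside} a block at any position $i$. Thus condition~(3) here is the analog of the strict condition~(2) in Proposition~\ref{prop:drop}, not of its condition~(3); the equality discussion you must supply has no literal counterpart in the drop proof.

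Second, your sketch of the equality clause tacitly assumes $i=1$: you name the obstruction as ``the $\sigma$-letter immediately preceding $\sigma^{(j)}$, namely $x_{j-1}$.'' For $i>1$ the letter immediately left of $\sigma^{(j)}_i$ is $\sigma^{(j)}_{i-1}$, not $x_{j-1}$, and it (or $\tau^{(j)}_{i+1}$ on the other side) can equally well block the commutation. What actually happens is a propagation: if $\tau^{(j)}_i=\sigma^{(j)}_i=c$ and the word is reduced, the blocking forces $\sigma^{(j)}_{i-1}=c-1$, and then $\sigma^{(j)}_{i-1}\le\tau^{(j)}_{i-1}<\tau^{(j)}_i=c$ pins $\tau^{(j)}_{i-1}=c-1$ as well, so equality descends to position $i-1$; iterating reaches $i=1$ and only then does $x_{j-1}$ enter. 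This is the ``routine bookkeeping'' you allude to, but it is the step where the argument genuinely differs from Proposition~\ref{prop:drop} rather than merely reversing it. (Incidentally, the displayed condition $\tau^{(j)}_1=x_{j-1}-1$ is inconsistent with $x_{j-1}<\sigma^{(j)}_1\le\tau^{(j)}_1$ and with the use in Definition~\ref{def:lift-two}; it should read $\tau^{(j)}_1=x_{j-1}+1$.)
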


We call this factorization the \emph{lift} alignment since we will lift the unblocked cells $x_1,\ldots,x_k$ from $\sigma$ up to $\tau$ without changing the Coxeter--Knuth equivalence class.

\begin{definition}
  Given two increasing words $\tau,\sigma$ such that $\tau\sigma$ is reduced, define 
  \[ \lift(\tau\sigma) = \tau^{(0)} \tau^{(1)} x_1 \tau^{(2)} \cdots x_k \tau^{(k+1)} \sigma^{(1)} \check{\sigma}^{(2)} \cdots \check{\sigma}^{(k+1)}, \]
  where the factorization is the unique one in Proposition~\ref{prop:lift} and for $1\leq j \leq k$
  \[ \check{\sigma}^{(j+1)}_i = \left\{ \begin{array}{ll}
    \sigma^{(j+1)}_i-1 & \text{for} \ 1 \leq i \leq b_j \\
    \sigma^{(j+1)}_i & \text{for} \ b_j+1 \leq i \leq \ell(\sigma^{(j+1)})
  \end{array} \right. \]
  for $b_j = \max\{b \mid  \tau^{(j+1)}_i = \sigma^{(j+1)}_i = x_j+i \ \forall 1 \leq i \leq b \}$ if $x_j = \sigma^{(j+1)}_1-1$ or else $0$.
  \label{def:lift-two}
\end{definition}

For example, lifting the aligned words in Fig.~\ref{fig:drop-ex-2} results in the words in Fig.~\ref{fig:drop-ex}.

\begin{lemma}[Lift Lemma]
  Given two increasing words $\tau,\sigma$ such that $\tau\sigma$ is reduced, $\lift(\tau\sigma)$ is Coxeter--Knuth equivalent to $\tau\sigma$.
  \label{lem:lift}
\end{lemma}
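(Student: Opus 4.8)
The plan is to prove the Lift Lemma by mirroring, essentially line for line, the proof of the Drop Lemma (Lemma~\ref{lem:drop}). The two operations are designed so that $\lift$ inverts $\drop$ on pairs of increasing rows whose concatenation is reduced, so one should expect the Coxeter--Knuth moves used here to be the reverses of those used there; I would keep this as a guiding heuristic and as a consistency check on the value-shift formula defining $\check{\sigma}^{(j)}$. A useful structural remark at the outset: every move in the argument is an elementary Coxeter--Knuth move, hence preserves both the underlying permutation and the word length, so each intermediate word --- and in particular $\lift(\tau\sigma)$ itself --- is automatically reduced once we begin from the reduced word $\tau\sigma$. This is why I would argue directly rather than try to deduce the lemma by applying Lemma~\ref{lem:drop} to $\lift(\tau\sigma)$, which would be circular.

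First I would dispose of the case $k=1$, where, with notation as in Proposition~\ref{prop:lift}, one must show that $\tau^{(0)}\tau^{(1)}\tau^{(2)}\,\sigma^{(1)}x\sigma^{(2)}$ is Coxeter--Knuth equivalent to $\tau^{(0)}\tau^{(1)}x\tau^{(2)}\,\sigma^{(1)}\check{\sigma}^{(2)}$. Using Lemma~\ref{lem:slide} repeatedly --- legal because Proposition~\ref{prop:lift}(3) supplies the inequalities $\tau^{(j)}_i \ge \sigma^{(j)}_i$ needed to interleave a $\sigma$-row up among the matching $\tau$-row --- I would interleave the letters of $\sigma^{(1)}$ and of $\sigma^{(2)}$ up among those of $\tau^{(1)}$ and $\tau^{(2)}$ respectively, bringing $x$ into contact with $\tau^{(2)}_1$. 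By Proposition~\ref{prop:lift}(2) we have $\tau^{(1)}_{\ell(\tau^{(1)})} < x$, and since $\sigma$ is increasing and $\tau^{(2)}_1 \ge \sigma^{(2)}_1$ we also have $x < \tau^{(2)}_1$, with equality $x = \tau^{(2)}_1 - 1$ occurring precisely in the chain subcase of Proposition~\ref{prop:lift}(3). A single $\CK_2$ move then slides $x$ past $\tau^{(2)}_1$ --- a commutation when $x < \tau^{(2)}_1 - 1$, a braid when $x = \tau^{(2)}_1 - 1$ --- after which de-interleaving via Lemma~\ref{lem:slide} in reverse deposits $x$ between $\tau^{(1)}$ and $\tau^{(2)}$ and returns the remaining letters of $\sigma$ to the bottom row. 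The degenerate situations where $\tau^{(2)}$ or $\sigma^{(2)}$ is empty are handled as in the corresponding cases of Lemma~\ref{lem:drop}.

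The step I expect to be the main obstacle is, exactly as in the final paragraph of the proof of Lemma~\ref{lem:drop}, the equality (\emph{chain}) subcase of Proposition~\ref{prop:lift}(3): when $x = \sigma^{(2)}_1 - 1$ and $\tau^{(2)}_i = \sigma^{(2)}_i = x+i$ for $i = 1,\ldots,b$ with $b = b_1$ maximal. Here one $\CK_2$ move does not suffice, and one must show that reducedness forces a cascade of braid moves, each triple $(\sigma^{(2)}_{i-1}-1)\,\tau^{(2)}_i\,\sigma^{(2)}_i$ becoming $\tau^{(2)}_i\,(\sigma^{(2)}_{i-1}-1)\,(\sigma^{(2)}_i-1)$, so that exactly $\sigma^{(2)}_1,\ldots,\sigma^{(2)}_b$ are lowered to $\check{\sigma}^{(2)}_1,\ldots,\check{\sigma}^{(2)}_b$ while $x$ ends in the correct slot; checking that the cascade terminates precisely at $b_1$ --- using that $\sigma^{(2)}$ is strictly increasing and $\tau\sigma$ is reduced, so the chain cannot continue --- is the delicate point. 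Once the case $k=1$ is in hand, the case $k>1$ follows by induction on $k$ exactly as in Lemma~\ref{lem:drop}: lifting the last cell $x_k$ first yields a pair that, after $x_k$ is absorbed into the rows, re-factors with strictly fewer unblocked cells, so the remaining $x_1,\ldots,x_{k-1}$ can be lifted one at a time.
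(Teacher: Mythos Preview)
Your treatment of the $k=1$ case is essentially the paper's own argument: interleave $\sigma^{(1)}$ (and, as needed, $\sigma^{(2)}$) into the matching blocks of $\tau$ via Lemma~\ref{lem:slide}, perform one $\CK_2$ move at the interface --- commutation or braid according to Proposition~\ref{prop:lift}(3) --- and de-interleave, with the braid cascade producing exactly the decrements in $\check{\sigma}^{(2)}$.

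The gap is in your inductive step. You propose lifting the \emph{rightmost} cell $x_k$ first, by analogy with the Drop Lemma, which drops $x_k$ first. But the two alignments are anchored on opposite sides: drop alignment is built left-to-right, lift alignment right-to-left. The correct mirror of ``drop $x_k$ first'' is therefore ``lift $x_1$ first'', and this is what the paper does. Concretely, if you lift $x_k$ and then recompute the lift alignment of $\tau' = \tau^{(0)}\cdots\tau^{(k)}x_k\tau^{(k+1)}$ above $\sigma' = \sigma^{(1)}x_1\cdots x_{k-1}\sigma^{(k)}\check{\sigma}^{(k+1)}$, the column containing $x_{k-1}$ is now covered by $\tau^{(k)}_1$ (since $\tau^{(k)}_1 \ge \sigma^{(k)}_1 > x_{k-1}$), so $x_{k-1}$ is no longer unblocked; the new alignment does \emph{not} have $x_1,\ldots,x_{k-1}$ as its unblocked cells, and your induction does not recover $\lift(\tau\sigma)$. (For a small witness, take $\tau=(1,4)$, $\sigma=(2,3,5)$ with $x_1=2$, $x_2=5$: the pointwise inequalities needed to run the $k=1$ argument around $x_2$ already fail, since you would need $1\ge 2$.) By contrast, lifting $x_1$ first leaves the right-anchored portion of the alignment intact, and the new pair refactors with unblocked cells exactly $x_2,\ldots,x_k$, so the induction goes through. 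Replace ``lift $x_k$ first'' by ``lift $x_1$ first'' and your sketch becomes the paper's proof.
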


\begin{proof}
  With the factorization as denoted in Proposition~\ref{prop:lift}, consider first the case $k=1$, in which we must show $\tau^{(0)} \tau^{(1)} \tau^{(2)} \sigma^{(1)} x \sigma^{(2)}$ is Coxeter--Knuth equivalent to $\tau^{(0)} \tau^{(1)} x \tau^{(2)} \check{\sigma}^{(1)} \sigma^{(2)}$. Let $\ell_j = \ell(\tau^{(j)})$ for $j=0,1,2$. Since $\tau$ is increasing and $\tau^{(1)}_i > \sigma^{(1)}_i$, we can apply Lemma~\ref{lem:slide} $\ell_1$ times to move letters of $\sigma^{(1)}$ left so that $\tau\sigma$ is Coxeter--Knuth equivalent to
  \begin{equation}
    \tau^{(0)} \tau^{(1)}_1 \sigma^{(1)}_1 \cdots \tau^{(1)}_{\ell_1} \sigma^{(1)}_{\ell_1} \tau^{(2)} x \sigma^{(2)} .
    \label{e:up2}
  \end{equation}
  
  If $\ell_2=0$, then since $\sigma^{(1)}_i<x$ for all $i$, we may apply Lemma~\ref{lem:slide} to $\sigma^{(1)}_i$ for $i \leq \ell_1$ to deduce Eq.~\eqref{e:up2} is Coxeter--Knuth equivalent to $\tau^{(0)}\tau^{(1)}x\sigma^{(1)}$ as desired.

  If $\ell_2>0$ and $\tau^{(2)}_1 > \sigma^{(2)}_1$, then by Proposition~\ref{prop:lift}(3), $x < \sigma^{(2)}_1 < \tau^{(2)}_1$, and so $\CK_2(\tau^{(2)}_1 x \sigma^{(2)}_1) = x \tau^{(2)}_1 \sigma^{(2)}_1$. Applying Lemma~\ref{lem:slide} $\ell_2$ times to $x$ and the letters of $\sigma^{(2)}$, then $\ell_1$ times to the letters of $\sigma^{(1)}$ shows Eq.~\eqref{e:up} is Coxeter--Knuth equivalent to $\tau^{(0)} \tau^{(1)} x \tau^{(2)} \sigma^{(1)}\sigma^{(2)}$, as desired.

  Finally, suppose $\ell_2>0$ and $\tau^{(2)}_{i} = \sigma^{(2)}_i = x+i$ for $i\leq b$ with $b>0$ maximal. By Proposition~\ref{prop:lift}(3) and the increasing property of $\tau$, we have $\sigma^{(2)}_j <= \tau^{(2)}_j < \tau^{(2)}_{j+1}$. Thus applying Lemma~\ref{lem:slide} shows Eq.~\eqref{e:up2} is Coxeter--Knuth equivalent to
  \begin{equation}
    \tau^{(0)} \tau^{(1)}_1 \sigma^{(1)}_1 \cdots \tau^{(1)}_{\ell_1} \sigma^{(1)}_{\ell_1} \tau^{(2)}_1 x \tau^{(2)}_2 \sigma^{(2)}_1 \cdots \tau^{(2)}_{\ell_2} \sigma^{(2)}_{\ell_2-1} \sigma^{(2)}_{\ell_2} .    
    \label{e:up3}
  \end{equation}
  Based on the assumptions on $b$, we have the elementary relations
  \[ \CK_2(\tau^{(2)}_{i} \sigma^{(2)}_{i-1} \sigma^{(2)}_{i}) = \left\{ \begin{array}{ll}
    \sigma^{(2)}_{i-1} \tau^{(2)}_{i} \sigma^{(2)}_{i} & \text{if} \ i>b, \\
    \sigma^{(2)}_{i-1} \tau^{(2)}_{i} (\sigma^{(2)}_{i}-1) & \text{if} \ i \leq b,
  \end{array} \right. \]
  and $\CK_2(\tau^{(2)}_1 x \sigma^{(2)}_1) = x \tau^{(2)}_1 (\sigma^{(2)}_1-1)$. Thus Eq.~\eqref{e:up3} is Coxeter--Knuth equivalent to
  \begin{displaymath}
    \tau^{(0)} \tau^{(1)}_1 \sigma^{(1)}_1 \cdots \tau^{(1)}_{\ell_1} \sigma^{(1)}_{\ell_1} x \tau^{(2)}_1 (\sigma^{(2)}_1-1) \cdots \tau^{(2)}_b (\sigma^{(2)}_b-1) \tau^{(2)}_{b+1} \sigma^{(2)}_{b+1} \cdots \tau^{(2)}_{\ell_2} \sigma^{(2)}_{\ell_2} .    
  \end{displaymath}
  Notice that the letters of $\sigma^{(2)}$ have now been decremented to letters of $\check{\sigma}^{(2)}$. Finally, apply Lemma~\ref{lem:slide} $\ell_2$ times to the letters of $\check{\sigma}^{(2)}$, then $\ell_1$ times to the letters of $\sigma^{(1)}$ to see this is Coxeter--Knuth equivalent to $\tau^{(0)} \tau^{(1)} x \tau^{(2)} \sigma^{(1)} x \check{\sigma}^{(2)}$, as desired.

  For $k>1$, the above case shows $\tau\sigma$ is Coxeter--Knuth equivalent to 
  \[ \tau^{(0)} \tau^{(1)} x_1 \tau^{(2)} \cdots \tau^{(k)} \sigma^{(1)} \check{\sigma}^{(2)} x_2 \sigma^{(3)} \cdots x_k \sigma^{(k+1)}. \]
  However, notice that this new pair factors uniquely by combining $\tau^{(0)}$ and $\tau^{(1)}_1$, combining $\tau^{(1)}_2\cdots\tau^{(1)}_{\ell_1}$, $x$, and $\tau^{(2)}$, and combining $\sigma^{(1)}$ and $\check{\sigma}^{(2)}$. Therefore there are fewer factors in the result, so by induction on $k$, we may drop the remaining $x_2,\ldots,x_{k}$ as well, completing the proof.
\end{proof}

Extending Definition~\ref{def:lift-two}, we define $\lift_i$ for a reduced word as follows.

\begin{definition}
  Let $\rho$ be a reduced word, and let $(\rho^{(k)} | \cdots | \rho^{(1)})$ be an increasing decomposition of $\rho$. Define $\lift_i(\rho)$ by replacing $\rho^{(i+1)}\rho^{(i)}$ with $\lift(\rho^{(i+1)}\rho^{(i)})$. 
  \label{def:lift_i}
\end{definition}

\begin{figure}[ht]
  \begin{center}
    \begin{tikzpicture}[xscale=3.3,yscale=2.5]
      \node at (0,1) (a1) {$\tableau{6 & 9 \\ 3 & 7 & 8 \\ 2 & 3 & 5 & 9 \\ 1 & 2 & 4 & 5 & 6}$};
      \node at (1,0) (b0) {$\tableau{6 & 9 \\ 3 & 7 & 8 \\ 2 & 3 & 4 & 5 & 9 \\ 1 & 2 & 4 & 6}$};
      \node at (1,1) (b1) {$\tableau{6 & 9 \\ 3 & 7 & 8 & 9 \\ 2 & 3 & 5 \\ 1 & 2 & 4 & 5 & 6}$};
      \node at (1,2) (b2) {$\tableau{6 & 7 & 9 \\ 3 & 8 \\ 2 & 3 & 5 & 9 \\ 1 & 2 & 4 & 5 & 6}$};
      \node at (2,0) (c0) {$\tableau{6 & 9 \\ 3 & 7 & 8 & 9 \\ 2 & 3 & 4 & 5 & 6 \\ 1 & 2 & 4}$};
      \node at (2,1) (c1) {$\tableau{6 & 7 & 9 \\ 3 & 8 \\ 2 & 3 & 4 & 5 & 9 \\ 1 & 2 & 4 & 6}$};
      \node at (2,2) (c2) {$\tableau{6 & 7 & 8 & 9 \\ 3 & 8 \\ 2 & 3 & 5 \\ 1 & 2 & 4 & 5 & 6}$};
      \node at (3,1) (d1) {$\tableau{6 & 7 & 8 & 9 \\ 3 & 8 \\ 2 & 3 & 4 & 5 & 6 \\ 1 & 2 & 4}$};
      \draw[thick,color=blue  ,->] (a1) -- (b0) node[midway,right] {$\lift_1$} ;
      \draw[thick,color=purple,->] (a1) -- (b1) node[midway,above] {$\lift_2$} ;
      \draw[thick,color=violet,->] (a1) -- (b2) node[midway,left ] {$\lift_3$} ;
      \draw[thick,color=blue  ,->] (b1) -- (c0) node[midway,right] {$\lift_1$} ;
      \draw[thick,color=blue  ,->] (b2) -- (c1) node[midway,right] {$\lift_1$} ;
      \draw[thick,color=violet,->] (b0) -- (c1) node[midway,left ] {$\lift_3$} ;
      \draw[thick,color=violet,->] (b1) -- (c2) node[midway,left ] {$\lift_3$} ;
      \draw[thick,color=blue  ,->] (c2) -- (d1) node[midway,right] {$\lift_1$} ;
      \draw[thick,color=violet,->] (c0) -- (d1) node[midway,left ] {$\lift_3$} ;
    \end{tikzpicture}
  \end{center}
  \caption{\label{fig:key-raise}The lifting algorithm applied to an increasing Young tableau.}
\end{figure}
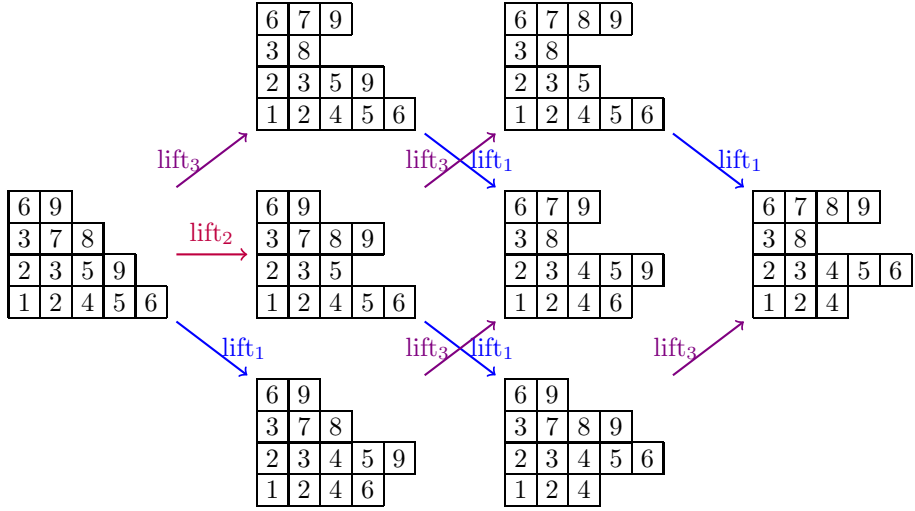

Reversing the symmetric situation, we can define an \emph{increasing key tableau} with the corresponding row condition and such that the result is lift-invariant.

\begin{definition}
  A key tableau $T$ is \emph{increasing} if the rows are strictly increasing (left to right) and $\lift_i(T)=T$ for all $i$.
  \label{def:inc_key}
\end{definition}

Unlike the case for $\drop_i$, we do not wish to apply the maps $\lift_i$ indiscriminantly until reaching some increasing key tableau. Foremost among the reasons is that these maps do not, in general, satisfy the nil-Hecke relations. For example, Fig.~\ref{fig:key-raise} shows all nontrivial $\lift_i$ operators applied the leftmost tableau $P$, which is the unique increasing Young tableau in the Coxeter--Knuth class. Notice
\[ \lift_1 \circ \lift_2 \circ \lift_1 (P) = \lift_1 (P) \neq \lift_1 \circ \lift_2 (P) = \lift_2 \circ \lift_1 \circ \lift_2 (P). \]
Furthermore, both the third and fourth tableaux (from the left) in the middle row of Fig.~\ref{fig:key-raise} are increasing, though only the fourth is Yamanouchi. 

Nevertheless, we do have a \emph{canonical lifting path} from the unique increasing Young tableau to the unique Yamanouchi tableau. To define this path, we say that $\lift_i$ acts \emph{faithfully} on a tableau $T$ if $\lift_i(T)\neq T$, and we extend this notion to a sequence of lifting maps in the obvious way.

\begin{lemma}
  Let $P$ be an increasing Young tableau with reduced reading word, and suppose each operator in $\lift_{i_k} \circ \cdots \circ \lift_{i_1}(P)$ acts faithfully. Then 
  \[ \Des( \lift_{i_k} \circ \cdots \circ \lift_{i_1} (P) ) = s_{i_k} \cdots s_{i_1} \cdot \Des(P), \]
  where $s_i$ acts on compositions by interchanging parts in positions $i$ and $i+1$.
  \label{lem:extremal}
\end{lemma}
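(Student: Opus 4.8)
The plan is to induct on $k$ and reduce at once to a single faithful step. Since $s_{i_k}\cdots s_{i_1}$ is a composition of the transpositions $s_{i_r}$, since $\lift_i$ alters only rows $i$ and $i+1$ of the descent tableau (Definition~\ref{def:lift_i}) while $s_i$ permutes only the entries in positions $i,i+1$ of a composition, and since $\Des$ of a reduced word is by definition the reversed list of its run lengths, it suffices to prove: if $\rho$ is a reduced word whose descent tableau has rows $\rho^{(1)},\dots,\rho^{(m)}$ from the bottom and $\lift_i$ acts faithfully on $\rho$, then $\Des(\lift_i(\rho))$ is $\Des(\rho)=(\ell(\rho^{(1)}),\dots,\ell(\rho^{(m)}))$ with the entries in positions $i$ and $i+1$ interchanged. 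The Lift Lemma (Lemma~\ref{lem:lift}) keeps the result reduced, so the induction is legitimate, and everything reduces to understanding how $\lift_i$ changes the two rows $\tau:=\rho^{(i+1)}$ and $\sigma:=\rho^{(i)}$.

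I would then isolate two facts. \emph{(a) No merging:} writing $\lift(\tau\sigma)=\tau'\sigma'$, the words $\tau'$ and $\sigma'$ are exactly the $(i{+}1)$-st and $i$-th runs of $\lift_i(\rho)$, so that $\Des(\lift_i(\rho))$ really is read off these rows. Since rows break at descents, the junction $\rho^{(i+2)}\mid\tau'$ is automatic (the first letter of $\tau'$ is still $\tau_1$), and one must only check the junctions $\tau'\mid\sigma'$ and $\sigma'\mid\rho^{(i-1)}$ — here the last letter of $\sigma'$ may be smaller than the last letter of $\sigma$, because of the $\check\sigma$-decrement of Definition~\ref{def:lift-two}, so reducedness of $\rho$ must be used to exclude the resulting ascent, in the style of the proof of Proposition~\ref{prop:drop}(3). \emph{(b) Length swap:} by Definition~\ref{def:lift-two} the new rows have lengths $\ell(\tau)+k$ and $\ell(\sigma)-k$ with $k$ the number of lifted cells $x_1,\dots,x_k$, and $\lift_i$ is faithful exactly when $k\ge 1$; by Proposition~\ref{prop:lift}(1) we have $\ell(\tau)=\ell(\tau^{(0)})+\sum_{j\ge1}\ell(\sigma^{(j)})$ and $\ell(\sigma)=k+\sum_{j\ge1}\ell(\sigma^{(j)})$, so the identity $k=\ell(\sigma)-\ell(\tau)$ — which is precisely what makes $(\ell(\tau'),\ell(\sigma'))=(\ell(\sigma),\ell(\tau))$ — is equivalent to $\tau^{(0)}=\varnothing$.

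The heart of the matter, and the step I expect to be the main obstacle, is thus to show $\tau^{(0)}=\varnothing$ whenever $k\ge 1$. I would run the recursion of Definition~\ref{def:lift-align}: a bad-column split peels off exactly one cell of $\sigma$ and passes to a sub-pair whose lower word has become exactly one shorter, relative to its upper word, than before, while a split-free stage right-justifies and contributes $\max\!\big(\ell(\sigma_{\mathrm{sub}})-\ell(\tau_{\mathrm{sub}}),0\big)$ further cells, so that $k=p+\max(\ell(\sigma)-\ell(\tau)-p,0)$ with $p$ the number of splits; the claim then reduces to $p\le\ell(\sigma)-\ell(\tau)$, i.e.\ that no split can occur once the lower sub-word is no longer than the upper one. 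This is where reducedness must do real work: a bad column in a right-justified alignment with the upper word at least as long as the lower would, after the commutations of Lemma~\ref{lem:slide}, produce two equal adjacent letters in a word Coxeter-equivalent to $\rho$, contradicting that $\rho$ is reduced — again the mechanism of Proposition~\ref{prop:drop}(3). A possible shortcut, which I would also try, is to first establish the one-sided inverse relation $\drop_i\circ\lift_i=\mathrm{id}$; then idempotence of $\drop_i$ from Theorem~\ref{thm:braid} forces $\drop_i$ to act trivially on $\rho$, hence rows $i,i+1$ of $\rho$ to be left-justified with $\tau_j>\sigma_j$ on their common columns, which makes the absence of bad columns immediate — but verifying $\drop_i\circ\lift_i=\mathrm{id}$ from Definitions~\ref{def:lift-align} and~\ref{def:lift-two} carries its own bookkeeping of the blocks $\tau^{(j)},\sigma^{(j)},x_j$. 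Granting (a) and (b), each faithful $\lift_i$ swaps positions $i,i+1$ of $\Des$, and the lemma follows by induction on $k$.
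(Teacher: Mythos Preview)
Your reduction to a single faithful step is too aggressive: the claim that $\tau^{(0)}=\varnothing$ whenever $\lift_i$ acts faithfully is \emph{false} for a general reduced word $\rho$, even when $\tau=\rho^{(i+1)}$ and $\sigma=\rho^{(i)}$ are genuine rows of the run decomposition. Take $\rho=(1,4,2,3)$, which is reduced with runs $\tau=(1,4)$ and $\sigma=(2,3)$. The lift alignment right-justifies, finds a bad column at $\tau_1=1<\sigma_1=2$, slides $\tau$ left, and terminates with
\[
\begin{array}{ccc} 1 & & 4\\ & 2 & 3 \end{array}
\]
so $\tau^{(0)}=(1)$ and $k=1$. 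Here $\lift_1(\rho)=(1,2,4,3)$, which has $\Des=(1,3)$, while $s_1\cdot\Des(\rho)=s_1\cdot(2,2)=(2,2)$; the lengths do not swap. Your specific mechanism (``a bad column once $\ell(\sigma_{\mathrm{sub}})\le\ell(\tau_{\mathrm{sub}})$ forces equal adjacent letters'') also fails here: $(1,4,2,3)$ commutes to $(1,2,4,3)$ with no repeated adjacent letters. Your proposed shortcut $\drop_i\circ\lift_i=\mathrm{id}$ is likewise false on this example: $\drop_1(1,2,4,3)=(4,1,2,3)\neq(1,4,2,3)$.

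The point is that $\tau^{(0)}=\varnothing$ is not a consequence of reducedness; it is a structural feature of tableaux reachable by lifts from an increasing Young tableau, and you must carry that information through the induction rather than discard it. The paper does exactly this: it isolates the invariant that for any entry $z$ in row $i$, there are at least as many entries $<z$ in row $i$ as in any higher row $j>i$. This holds in any increasing Young tableau (columns increase upward), it immediately forces $\tau^{(0)}=\varnothing$ in the lift alignment of row $i+1$ above row $i$, and---crucially---it is preserved by each faithful $\lift_i$ (when $\tau^{(0)}=\varnothing$, lifted cells keep their columns and only larger entries slide left). Your argument can be repaired by replacing the bare reducedness hypothesis with this invariant and proving its stability under $\lift_i$; the ``no merging'' check in your step (a) is then also easier, since the invariant already controls first-column entries.
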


\begin{proof}
  In the notation of Definition~\ref{def:lift-align}, since $\ell(\tau^{(i)}) = \ell(\sigma^{(i)})$ for all $i>0$, raising the $x_i$'s from $\sigma$ to $\tau$ in $\lift(\tau\sigma)$ precisely exchanges the two parts of $\Des(\tau\sigma)$ provided $\tau^{(0)} = \varnothing$. Considering $P$, since columns strictly increase bottom to top, for any entry $z$ in row $i$, there are at least as many $x<z$ in row $i$ as there are $y<z$ in row $j>i$. In particular, taking $j=i+1$, ensuring that $\tau^{(0)}$ must be empty when applying $\lift_i$. Furthermore, when $\tau^{(0)}=\varnothing$, entries that move upward maintain their columns, and only larger entries move left. Therefore this property is maintained throughout lifting, ensuring $\tau^{(0)}=\varnothing$ at each step.
\end{proof}

For $i\leq j$, define the \emph{lifting sequence} $\lift_{[i,j]}$ by
\begin{equation}
  \lift_{[i,j]} = \lift_j \circ \lift_{j-1} \circ \cdots \circ \lift_i .
  \label{e:lift}
\end{equation}
We say that a lifting sequence $\lift_{[i,j]}$ acts faithfully on a tableau $T$ if $\lift_i$ acts faithfully on $T$ and $\lift_k$ acts faithfully on $\lift_{[i,k-1]}(T)$ for all $i < k \leq j$.

\begin{definition}
  For $P$ an increasing Young tableau whose row reading word is reduced, define the \emph{lift of $P$}, denoted by $\lift(P)$, to be the tableau of key shape constructed as follows. Set $T_0 = P$, and for $k>0$,
  \begin{enumerate}
  \item if $\lift_i(T_{k-1})=T_{k-1}$ for all $i$, then $\lift(P)=T_{k-1}$;
  \item otherwise, set $T_k = \lift_{[i_k,j_k]} (T_{k-1})$ where
    \begin{enumerate}
    \item $j_k$ is the maximum $j$ for which there exists $i \leq j$ such that $\lift_{[i,j]}$ acts faithfully on $T_{k-1}$, and
    \item $i_k$ is the minimum $i \leq j_k$ for which $\lift_{[i,j_k]}$ acts faithfully on $T_{k-1}$.
    \end{enumerate}
  \end{enumerate}
  \label{def:lift}
\end{definition}

For example, with $P$ the leftmost tableau in Fig.~\ref{fig:key-raise}, we have 
\[ \lift(P) = \lift_{[1,1]} \circ \lift_{[2,3]} (P) = \lift_1 \circ \lift_3 \circ \lift_2 (P) \]
which is the rightmost tableau in Fig.~\ref{fig:key-raise} and is Yamanouchi.

\begin{theorem}
  For $\rho$ a reduced word with $\D(\rho)$ an increasing Young tableau, the word $\lift(\rho)$ is Yamanouchi.
  \label{thm:lift}
\end{theorem}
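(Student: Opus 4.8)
The plan is to identify $\lift(\rho)$ as the unique Yamanouchi reduced word in the Coxeter--Knuth equivalence class $C$ of $\rho$. I begin with the formal reductions. The construction in Definition~\ref{def:lift} terminates, since every faithful application of an operator $\lift_i$ strictly increases the bounded statistic $\sum_i i\cdot\#\{\text{cells in row }i\}$; and by the Lift Lemma (Lemma~\ref{lem:lift}) each $\lift_i$ preserves $C$, so $\lift(\rho)\in C$. By Theorem~\ref{thm:wdeg-red} the fundamental slide generating polynomial of $C$ is a single Demazure character $\key_b$, whose stable limit is the Schur function $s_\lambda$ attached to $C$ by Theorem~\ref{thm:deg-red}; since $\rho$ is the \emph{unique} increasing Young tableau of $C$ (Theorem~\ref{thm:drop}) we get $\lambda=\Des(\rho)$, hence $\mathrm{sort}(b)=\lambda$. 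By Lemma~\ref{lem:yam}, every non-virtual $\sigma\in C$ satisfies $\des(\sigma)\geq b$ with equality for exactly one word, which is precisely the Yamanouchi word $\sigma^{\ast}$ of $C$ (this is the argument of Theorem~\ref{thm:yam}). So it suffices to prove that $\lift(\rho)$ is non-virtual with $\des(\lift(\rho))=b$.

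Next I would analyse the canonical path $P=T_0\to T_1\to\cdots\to T_N=\lift(P)$. Each step $\lift_{[i_k,j_k]}$ acts faithfully on $T_{k-1}$ by construction, so Lemma~\ref{lem:extremal} governs every step: the weak descent composition of $T_k$ is obtained from that of $T_{k-1}$ by the product of transpositions $s_{j_k}\cdots s_{i_k}$ acting on positions. Exactly as in the proof of Lemma~\ref{lem:extremal}, column-strictness of $P$ forces the invariant $\tau^{(0)}=\varnothing$ at each lift, so a faithful lift merely exchanges the lengths of two adjacent rows; in particular every $T_k$ stays column-strict, and since an increasing Young tableau of partition shape with reduced reading word is automatically non-virtual, this property persists and $\des(T_k)$ can be read directly off the row lengths of $\D(T_k)$. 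Consequently both $\des(\lift(\rho))$ and $b$ are rearrangements of $\lambda$ with zero parts inserted.

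The crux is to upgrade ``$\des(\lift(\rho))$ sorts to $\lambda$'' to ``$\des(\lift(\rho))=b$''. The approach I would take is to carry along the canonical path the invariant that $\des(T_k)\geq b$ and that \emph{if} $\des(T_k)>b$ \emph{then} $T_k$ admits some faithful lifting sequence $\lift_{[i,j]}$; granting this, the termination criterion~(1) of Definition~\ref{def:lift} forces $\des(\lift(\rho))=b$, which completes the proof. To establish the conditional clause one uses that $\des(T_k)>b$ means some initial partial sum of $\des(T_k)$ strictly exceeds that of $b$, i.e.\ some run of $T_k$ sits at a lower value position than the corresponding run does in $\sigma^{\ast}$; one must then exhibit a faithful lifting sequence moving such a run upward. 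Here the composite operators $\lift_{[i,j]}$ (rather than individual $\lift_i$, which can fail to be faithful or even lower a run's position) are essential, and the natural tool is the mutual-inverse relationship between $\drop$ and $\lift$ on aligned pairs $\tau\sigma$, obtained by checking that the adjustments $\hat\sigma^{(j)}$ in Definition~\ref{def:drop-two} and $\check\sigma^{(j)}$ in Definition~\ref{def:lift-two} are inverse; combined with $\drop(\sigma^{\ast})=P$ (Theorem~\ref{thm:drop}) this gives a structural comparison of $T_k$ against $\sigma^{\ast}$ pinning down where an available lift must lie.

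I expect the conditional clause to be the main obstacle, for the reason illustrated by Figure~\ref{fig:key-raise}: an intermediate (key-shape) tableau can be stable under every individual operator $\lift_i$ without being Yamanouchi, so the argument must show that the specific greedy rule of Definition~\ref{def:lift} never strands the process at such a false terminus. A workable line is induction on $N$ (equivalently on the bounded statistic above): since $\drop(T_1)=P$ by Theorem~\ref{thm:drop}, the sequence $(T_1,\dots,T_N)$ is the canonical continuation started from $T_1$, so it suffices to show that the first greedy choice $\lift_{[i_1,j_1]}$ is compatible with reversing a dropping path from $\sigma^{\ast}$; the maximality of $j_1$ and minimality of $i_1$ are exactly what is needed to make this compatibility hold, and the base case is the direct verification that a lift-stable increasing Young tableau — necessarily rectangular, since any two adjacent rows of unequal length produce a faithful $\lift_i$ — is Yamanouchi.
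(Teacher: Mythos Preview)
Your reductions are correct and match the paper: termination, preservation of the class $C$, identification of $b$ with $\mathrm{sort}(b)=\Des(\rho)$, and the reduction via Lemma~\ref{lem:yam} to $\des(\lift(\rho))=b$, with Lemma~\ref{lem:extremal} tracking descent compositions along the path. The gap is the crux you yourself flag: the ``conditional clause'' that whenever $\des(T_k)>b$ some $\lift_{[i,j]}$ acts faithfully is only sketched. Your induction on $N$ is not well-posed, since Definition~\ref{def:lift} applies only to increasing Young tableaux, so ``the canonical continuation started from $T_1$'' is undefined, and even extending the greedy rule to $T_1$ gives no reason the subsequent choices coincide with $T_2,\dots,T_N$. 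The local drop/lift inverse on a single aligned pair does not globalize to the comparison of $T_k$ against $\sigma^{\ast}$ that you need. And the base case is incomplete: rectangularity of $\lambda$ does not determine which rows of $b$ are nonzero, so ``rectangular implies Yamanouchi'' still requires an argument.

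The paper avoids the false-terminus problem by transferring to $\SKT(a)$. Theorem~\ref{thm:wdeg-red} furnishes a $\des$-preserving bijection $\theta:C\to\SKT(a)$, and the key observation is that for each rearrangement of $a$ there is at most one standard key tableau with that weak descent composition; since Lemma~\ref{lem:extremal} shows every $\Des(T_k)$ rearranges $\lambda$, each $\theta(T_k)$ is pinned down and the $\lift_i$ transport to these extremal elements of $\SKT(a)$. On that side the greedy rule becomes a sorting procedure on compositions: with $\alpha=\flatten(a)$, the first pass $\lift_{[i_1,j_1]}$ has $j_1$ equal to the largest position where $\lambda$ and $\alpha$ disagree, and after it $\Des(T_1)$ agrees with $\alpha$ in all positions $\geq j_1$ while the top rows of $\theta(T_1)$ read as the reverse identity down to row $j_1$. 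One then inducts on the remaining $j_1-1$ rows. This row-count induction on the $\SKT$ side is the missing idea; your direct attack on the reduced-word side has no analogue of the uniqueness-of-extremal-$\SKT$ observation that makes it work.
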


\begin{proof}
  By Theorem~\ref{thm:wdeg-red}, there is a $\des$-preserving isomorphism, say $\theta$, from the Coxeter--Knuth equivalence class of $\rho$ to $\SKT(a)$ for some weak composition $a$. Moreover, by Theorem~\ref{thm:drop} and Proposition~\ref{prop:key-stable}, we must have $\mathrm{sort}(a) = \Des(\rho)$.

  Given any $T\in\SKT(a)$ for which $\des(T)$ rearranges the parts of $a$, if $S\in\SKT(a)$ has $\des(S)=\des(T)$, then $S=T$, which is to say that for each weak composition $b$ that rearranges $a$, there is at most one element of $\SKT(a)$ with weak descent composition $b$. In particular, $\theta(\rho)$ is determined as is $\theta(\sigma)$ for any $\sigma$ obtained from $\rho$ by a sequence of lifts by Lemma~\ref{lem:extremal}. Moreover, by Lemma~\ref{lem:extremal}, we may extend the maps $\lift_i$ to those $\SKT(a)$ whose weak descent compositions rearrange $a$ so that they intertwine with the isomorphism $\theta$. Thus it suffices to show that Definition~\ref{def:lift} applied via $\theta$ to $\SKT(a)$ gives the Yamanouchi standard key tableau whose row reading word is the reverse of the identity.
  
  Let $P = \theta(\rho)\in\SKT(a)$, and set $\lambda=\Des(P)$ say with length $\ell$. Let $\alpha$ be the composition obtained by removing zero parts of $a$. After the first pass of Definition~\ref{def:lift} applied to $P$, we have $T_1 = \lift_{[i_1,j_1]}(P)\in\SKT(a)$ where $j_1$ is the largest index $j$ for which $\lambda_j \neq \alpha_j$ but $\lambda_k = \alpha_k$ for all $k>j$, and $i_1$ is the largest index $i<j_1$ for which $\lambda_i = \alpha_{j_1}$. Therefore $\Des(T_1) = s_{j_1} \cdots s_{i_1} \cdot \lambda$ agrees with $\alpha$ in all positions $j \geq j_1$. Furthermore, reading the rows of $T_1$ left to right from the top down to $j_1$, we precisely have the reverse of the identity. Thus we may proceed by induction on the first $j_1-1 < \ell$ rows of $T_1$. 
\end{proof}

For an example of the maps $\lift_i$ induced on elements of $\SKT(a)$ whose weak descent compositions rearrange $a$, see Fig.~\ref{fig:SKT-raise}. Compare this with Fig.~\ref{fig:key-raise}. 

\begin{figure}[ht]
  \begin{center}
    \begin{tikzpicture}[xscale=3.3,yscale=2.5]
      \node at (0,1) (a1) {$\tableau{14 & 13 & 10 & 6 \\ 12 & 11 \\ 9 & 8 & 7 & 2 & 1 \\ 5 & 4 & 3}$};
      \node at (1,0) (b0) {$\tableau{14 & 13 & 10 & 1 \\ 12 & 11 \\ 9 & 8 & 7 & 6 & 5 \\ 4 & 3 & 2}$};
      \node at (1,1) (b1) {$\tableau{14 & 13 & 10 & 9 \\ 12 & 11 \\ 8 & 7 & 6 & 2 & 1 \\ 5 & 4 & 3}$};
      \node at (1,2) (b2) {$\tableau{14 & 13 & 12 & 6 \\ 11 & 10 \\ 9 & 8 & 7 & 2 & 1 \\ 5 & 4 & 3}$};
      \node at (2,0) (c0) {$\tableau{14 & 13 & 10 & 9 \\ 12 & 11 \\ 8 & 7 & 6 & 5 & 4 \\ 3 & 2 & 1}$};
      \node at (2,1) (c1) {$\tableau{14 & 13 & 12 & 1 \\ 11 & 10 \\ 9 & 8 & 7 & 6 & 5 \\ 4 & 3 & 2}$};
      \node at (2,2) (c2) {$\tableau{14 & 13 & 12 & 11 \\ 10 & 9 \\ 8 & 7 & 6 & 2 & 1 \\ 5 & 4 & 3}$};
      \node at (3,1) (d1) {$\tableau{14 & 13 & 12 & 11 \\ 10 & 9 \\ 8 & 7 & 6 & 5 & 4 \\ 3 & 2 & 1}$};
      \draw[thick,color=blue  ,->] (a1) -- (b0) node[midway,right] {$\lift_1$} ;
      \draw[thick,color=purple,->] (a1) -- (b1) node[midway,above] {$\lift_2$} ;
      \draw[thick,color=violet,->] (a1) -- (b2) node[midway,left ] {$\lift_3$} ;
      \draw[thick,color=blue  ,->] (b1) -- (c0) node[midway,right] {$\lift_1$} ;
      \draw[thick,color=blue  ,->] (b2) -- (c1) node[midway,right] {$\lift_1$} ;
      \draw[thick,color=violet,->] (b0) -- (c1) node[midway,left ] {$\lift_3$} ;
      \draw[thick,color=violet,->] (b1) -- (c2) node[midway,left ] {$\lift_3$} ;
      \draw[thick,color=blue  ,->] (c2) -- (d1) node[midway,right] {$\lift_1$} ;
      \draw[thick,color=violet,->] (c0) -- (d1) node[midway,left ] {$\lift_3$} ;
    \end{tikzpicture}
  \end{center}
  \caption{\label{fig:SKT-raise}The lifting algorithm applied to extremal standard key tableaux.}
\end{figure}
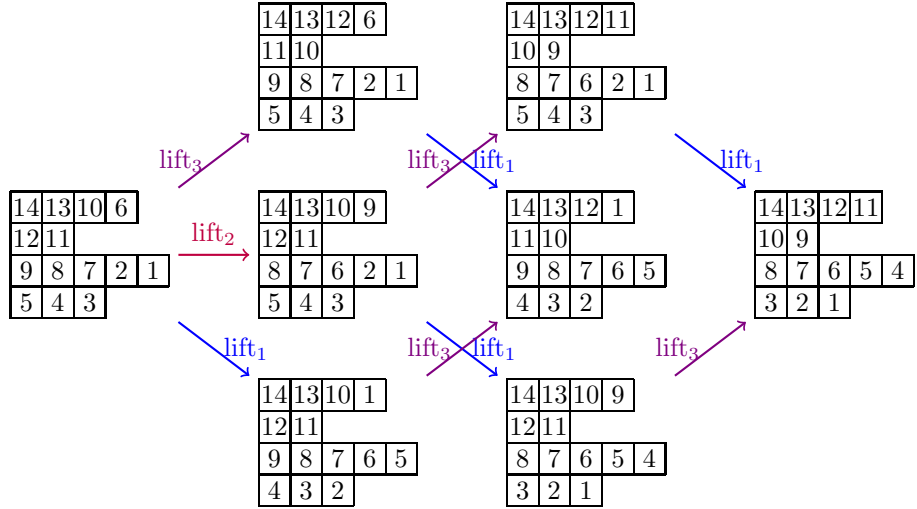

In particular, on the level of generating polynomials, we have the following improvement of Theorem~\ref{thm:LS} parallel to Theorem~\ref{thm:EG} and Corollary~\ref{cor:drop}.

\begin{corollary}
  For $w$ a permutation, we have
  \begin{equation}
    \schubert_{w} = \sum_{\substack{\rho\in\R(w) \\ \D(\rho) \ \text{increasing}}} \key_{\des(\lift(\rho))} .
    \label{e:schubert-new}
  \end{equation}
  \label{cor:schubert-new}
\end{corollary}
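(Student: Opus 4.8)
The plan is to read the right-hand side of \eqref{e:schubert-new} as a reindexing of Theorem~\ref{thm:yam}, using the $\lift$ map of Definition~\ref{def:lift} as the change of index. Theorem~\ref{thm:yam} already gives $\schubert_w = \sum_{\sigma\in\Yam(w)}\key_{\des(\sigma)}$, so it suffices to show that $\rho\mapsto\lift(\rho)$ is a bijection from $\{\rho\in\R(w): \D(\rho)\text{ an increasing Young tableau}\}$ onto $\Yam(w)$, and that for each such $\rho$ the weak composition $\des(\lift(\rho))$ is precisely the one indexing the Demazure character attached to the Coxeter--Knuth class of $\rho$.

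First I would observe that both index sets are parametrized by Coxeter--Knuth equivalence classes of $\R(w)$. By Theorem~\ref{thm:drop}, each class $C$ contains a unique reduced word $\pi_C$ whose descent tableau is an increasing Young tableau (necessarily of partition shape). By Lemma~\ref{lem:yam} together with the proof of Theorem~\ref{thm:yam}, each class $C$ also contains a unique Yamanouchi reduced word $\sigma_C$, whose weak descent composition is the weak composition that indexes the Demazure character attached to $C$ by Theorem~\ref{thm:wdeg-red}. Hence $C$ contributes exactly one term to the sum on the right of \eqref{e:schubert-new} (via $\pi_C$) and exactly one term to the sum of Theorem~\ref{thm:yam} (via $\sigma_C$), and the whole statement reduces to the identity $\lift(\pi_C) = \sigma_C$.

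Then I would verify $\lift(\pi_C) = \sigma_C$. On the one hand, $\lift(\pi_C)$ lies in the class $C$: it is produced from $\pi_C$ by finitely many applications of the operators $\lift_i$, each of which preserves the Coxeter--Knuth class, and in particular reducedness, by the Lift Lemma (Lemma~\ref{lem:lift}); this also makes the iteration in Definition~\ref{def:lift} legitimate, and it terminates as in the proof of Theorem~\ref{thm:lift} (each pass permutes $\Des$ nontrivially toward the target composition, cf.\ Lemma~\ref{lem:extremal}, and there are only finitely many rearrangements). On the other hand, $\lift(\pi_C)$ is Yamanouchi by Theorem~\ref{thm:lift}. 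Since $C$ contains only one Yamanouchi word, namely $\sigma_C$, we conclude $\lift(\pi_C) = \sigma_C$; in particular $\des(\lift(\pi_C)) = \des(\sigma_C)$ is the index attached to $C$.

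Finally I would substitute into Theorem~\ref{thm:yam}:
\[
  \sum_{\substack{\rho\in\R(w)\\ \D(\rho)\ \text{increasing}}} \key_{\des(\lift(\rho))}
  \;=\; \sum_{C} \key_{\des(\sigma_C)}
  \;=\; \sum_{\sigma\in\Yam(w)} \key_{\des(\sigma)}
  \;=\; \schubert_w .
\]
All the substantive content sits in results proved earlier — above all Theorem~\ref{thm:lift}, that the canonical lifting path ends at the Yamanouchi word, together with the Lift Lemma and Lemma~\ref{lem:extremal} behind it — so this corollary is essentially bookkeeping. The point that warrants attention is the bijectivity: one must invoke \emph{both} uniqueness statements (Theorem~\ref{thm:drop} for the increasing representative and the uniqueness in the proof of Theorem~\ref{thm:yam} for the Yamanouchi representative) to identify $\lift(\rho)$ with the \emph{particular} Yamanouchi word that indexes the class of $\rho$, rather than merely with some Yamanouchi word Coxeter--Knuth equivalent to $\rho$; once one knows $\lift$ stays within the class and each class harbors a single Yamanouchi word, this identification is forced.
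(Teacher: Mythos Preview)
Your proposal is correct and matches the paper's treatment: the paper states this corollary without proof, regarding it as an immediate consequence of Theorem~\ref{thm:yam} together with Theorem~\ref{thm:lift} (and implicitly the uniqueness statements of Theorem~\ref{thm:drop} and the proof of Theorem~\ref{thm:yam}), which is exactly the bookkeeping you spell out. Your explicit identification of $\lift$ as a bijection between the increasing and Yamanouchi representatives of each Coxeter--Knuth class is precisely the intended mechanism.
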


For example, lifting the increasing Young tableaux in Figure~\ref{fig:increasing} we arrive at the Yamanouchi key tableaux in Figure~\ref{fig:yam}, as promised.

%
\section{Insertion algorithms}
%
\label{sec:insertion}

Edelman and Greene \cite{EG87} define an insertion algorithm mapping reduced words to pairs of Young tableaux where the left is increasing and the right is standard. In this context, the left tableau gives the canonical Coxeter--Knuth equivalence class representative for obtaining the Schur expansion of a Stanley symmetric function, and the right tableau gives an explicit bijection between elements of the Coxeter--Knuth equivalence class and standard Young tableaux of fixed shape. We recall their definitions and main results for the purpose of generalizing them to the polynomial setting. In the generalization, the left tableau will be a Yamanouchi key tableau, and the right tableau will be a standard key tableau. Thus the left tableau gives the canonical Coxeter--Knuth equivalence class representative for obtaining the Demazure expansion of a Schubert polynomial, and the right tableau gives an explicit bijection between elements of the Coxeter--Knuth equivalence class and standard key tableaux of fixed shape.

\subsection{Edelman--Greene insertion}
\label{sec:insertion-EG}

Edelman and Greene \cite{EG87}(Definition~6.20) defined the following procedure for inserting a letter into an increasing tableau. 

\begin{definition}[\cite{EG87}]
  Let $P$ be an increasing Young tableau, and let $x$ be a positive integer. Let $P_i$ be the $i$th lowest row of $P$. Define the \emph{Edelman--Greene insertion of $x$ into $P$}, denoted by $P \stackrel{x}{\leftarrow}$, as follows. Set $x_0=x$ and for $i\geq 0$, insert $x_i$ into $P_{i+1}$ as follows: if $x_i \geq z$ for all $z\in P_{i+1}$, place $x_i$ at the end of $P_{i+1}$ and stop; otherwise, let $x_{i+1}$ denote the smallest element of $P_{i+1}$ such that $x_{i+1}>x_i$ (we say that $x_i$ \emph{bumps} $x_{i+1}$ in row $i+1$), replace $x_{i+1}$ by $x_i$ in $P_{i+1}$ only if $x_{i+1} \neq x_i+1$ or $x_i$ is not already in $P_{i+1}$, and continue.
  \label{def:insert-EG}
\end{definition}

This algorithm generalizes the insertion algorithm of Schensted \cite{Sch61}, building on work of Robinson \cite{Rob38}, later generalized by Knuth \cite{Knu70}. Robinson--Schensted insertion becomes a bijective correspondence between permutations and pairs of standard Young tableaux by constructing a second tableau to track the order in which new cells are added. The pair is typically denoted by $(P,Q)$, where $P$ is called the \emph{insertion tableau}, and $Q$ is called the \emph{recording tableau}.

Similarly, we construct the Edelman--Greene correspondence of a reduced word $\rho = (\rho_k,\ldots,\rho_1)$ by successively inserting the letters of $\rho$ from $k$ to $1$ into the empty tableau to create the \emph{Edelman--Greene insertion tableau} of $\rho$, denoted by $P(\rho)$. For example, Figure~\ref{fig:EG-P} shows the Edelman-Greene insertion tableau for the reduced word $\rho = (3,6,4,7,5,2,4)$.

\begin{figure}[ht]
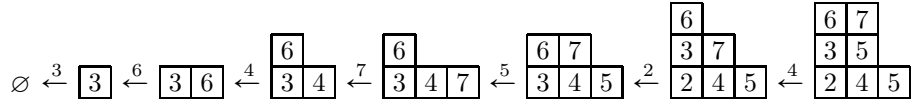

  \begin{displaymath}
    \arraycolsep=2pt
    \begin{array}{ccccccccccccccc}
    \raisebox{-1.8\cellsize}{$\varnothing$}
    & \raisebox{-1.75\cellsize}{$\xleftarrow{3}$}
    & \tableau{\\ \\ 3} 
    & \raisebox{-1.75\cellsize}{$\xleftarrow{6}$}
    & \tableau{\\ \\ 3 & 6} 
    & \raisebox{-1.75\cellsize}{$\xleftarrow{4}$}
    & \tableau{\\ 6 \\ 3 & 4} 
    & \raisebox{-1.75\cellsize}{$\xleftarrow{7}$}
    & \tableau{\\ 6 \\ 3 & 4 & 7} 
    & \raisebox{-1.75\cellsize}{$\xleftarrow{5}$}
    & \tableau{\\ 6 & 7 \\ 3 & 4 & 5} 
    & \raisebox{-1.75\cellsize}{$\xleftarrow{2}$}
    & \tableau{6 \\ 3 & 7 \\ 2 & 4 & 5}
    & \raisebox{-1.75\cellsize}{$\xleftarrow{4}$}
    & \tableau{6 & 7 \\ 3 & 5 \\ 2 & 4 & 5} 
    \end{array}    
  \end{displaymath}
  \caption{\label{fig:EG-P}The Edelman-Greene insertion tableau for the reduced word $\rho = (3,6,4,7,5,2,4)$.}
\end{figure}

Since Edelman--Greene insertion adds a single cell to an existing Young diagram, when inserting $\rho_i$, we create the \emph{Edelman--Greene recording tableau} of $\rho$, denoted by $Q(\rho)$, by adding a cell with entry $k-i+1$ into the position of $P(\rho_k,\ldots,\rho_i) \setminus P(\rho_k,\ldots,\rho_{i+1})$. For example, Figure~\ref{fig:EG-Q} shows the Edelman-Greene recording tableau for the reduced word $\rho = (3,6,4,7,5,2,4)$.

\begin{figure}[ht]
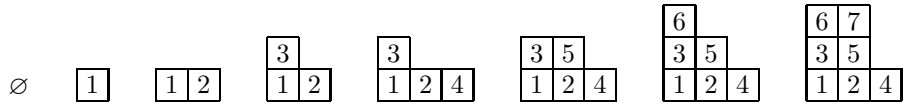

  \begin{displaymath}
    \arraycolsep=9pt
    \begin{array}{cccccccc}
    \raisebox{-1.8\cellsize}{$\varnothing$}
    & \tableau{\\ \\ 1} 
    & \tableau{\\ \\ 1 & 2} 
    & \tableau{\\ 3 \\ 1 & 2} 
    & \tableau{\\ 3 \\ 1 & 2 & 4}  
    & \tableau{\\ 3 & 5 \\ 1 & 2 & 4}  
    & \tableau{6 \\ 3 & 5 \\ 1 & 2 & 4}  
    & \tableau{6 & 7 \\ 3 & 5 \\ 1 & 2 & 4}  
    \end{array}    
  \end{displaymath}
  \caption{\label{fig:EG-Q}The Edelman-Greene recording tableau for the reduced word $\rho = (3,6,4,7,5,2,4)$.}
\end{figure}

Edelman and Greene derived many properties of this generalized insertion algorithm, including that the insertion tableau $P(\rho)$ defined by inserting $\rho_k,\ldots,\rho_1$ into the empty tableau is a well-defined increasing tableau whose row reading word is a reduced word for $w$. They proved the following \cite{EG87}(Theorem~6.24) relating their insertion to Coxeter--Knuth equivalence.

\begin{theorem}[\cite{EG87}] 
  For reduced words $\sigma,\tau$, we have $P(\sigma)=P(\tau)$ if and only if $\sigma$ and $\tau$ are Coxeter-Knuth equivalent.
  \label{thm:cox-knu}
\end{theorem}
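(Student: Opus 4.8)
The plan is to reduce both directions of the equivalence to a single statement — that Edelman--Greene insertion preserves Coxeter--Knuth equivalence on reading words — and then to identify $\row(P(\rho))$ with the canonical representative $\drop(\rho)$ produced by Theorem~\ref{thm:drop}. Throughout write $\alpha \equiv_{\mathrm{CK}} \beta$ for Coxeter--Knuth equivalence, and note first that this is a congruence for concatenation of reduced words: an elementary Coxeter--Knuth move acts on three consecutive letters and remains a legal move after letters are appended or prepended, and appending a fixed tail keeps a word reduced or not according to the permutation alone.

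The key step is the following \emph{Key Lemma}: for every reduced word $\rho$, the reading word $\row(P(\rho))$ is Coxeter--Knuth equivalent to $\rho$. Granting the (cited) facts from \cite{EG87} that $P(\rho)$ is a well-defined increasing Young tableau of partition shape and that $\row(P(\rho))$ is a reduced word for the same permutation, I would argue by induction on $\ell(\rho)$. Writing $\rho = \rho' x$ with $x$ the last letter inserted, we have $P(\rho) = P(\rho') \xleftarrow{x}$, and the inductive hypothesis together with the congruence property gives $\rho \equiv_{\mathrm{CK}} \row(P(\rho'))\, x$; so it suffices to prove the one-letter statement $\row(P \xleftarrow{x}) \equiv_{\mathrm{CK}} \row(P)\, x$ for $P$ an increasing Young tableau with $\row(P)\, x$ reduced. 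This I would prove by a secondary induction on the number of rows of $P$, following the Edelman--Greene bump path: either $x$ is appended to the bottom row $P_1$ (the two words are then literally equal), or $x$ bumps an entry $y$ out of $P_1$ into the higher rows $P^{+}$. In the latter case $\row(P \xleftarrow{x}) = \row(P^{+}\xleftarrow{y})\cdot(\text{modified }P_1)$, and applying the inductive hypothesis to the insertion of $y$ into $P^{+}$ and using the congruence property reduces the claim to a one-row identity comparing $y$ prepended to the (possibly) modified $P_1$ with $P_1$ followed by $x$. These one-row identities are short explicit Coxeter--Knuth computations in the spirit of Lemma~\ref{lem:slide}: a chain of commutation moves handles the case where $y$ genuinely replaces an entry of $P_1$, while the Edelman--Greene ``no replacement'' case (when $y = x+1$ already occurs in $P_1$) additionally uses a single braid move before the surplus letter is slid to the far right.

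With the Key Lemma in hand, the rest is formal. I would record the elementary observation that $\D(\row(T)) = T$ for every increasing Young tableau $T$ of partition shape: reading $T$ by rows from the top, column-strictness forces a descent between consecutive rows, so the maximal increasing runs of $\row(T)$ are exactly the rows of $T$, and Definition~\ref{def:d-red} then places these runs back into their rows. Since a reduced word is read off its descent tableau by listing the rows from top to bottom, $\D$ is injective on reduced words. Now for any $\rho$, the word $\row(P(\rho))$ is reduced, is Coxeter--Knuth equivalent to $\rho$ by the Key Lemma, and has descent tableau $\D(\row(P(\rho))) = P(\rho)$, which is increasing of partition shape; by the uniqueness assertion of Theorem~\ref{thm:drop} it follows that $\row(P(\rho)) = \drop(\rho)$, that is, $P(\rho) = \D(\drop(\rho))$ for every reduced word $\rho$.

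The theorem is then immediate: $P(\sigma) = P(\tau)$ iff $\D(\drop(\sigma)) = \D(\drop(\tau))$ iff $\drop(\sigma) = \drop(\tau)$ by injectivity of $\D$, iff $\sigma$ and $\tau$ lie in the same Coxeter--Knuth class — the last equivalence because, by Theorem~\ref{thm:drop}, $\drop$ sends each reduced word to the unique canonical representative of its class while keeping it in that class. The main obstacle is the one-letter insertion identity inside the Key Lemma; this is exactly the technical heart of the Edelman--Greene argument, but in the present framework it isolates into the handful of one-row computations above, the only delicate bookkeeping being the verification — which can be quoted from \cite{EG87} — that all the intermediate reading words arising along the bump path remain reduced, so that the inductive hypotheses apply. (One could alternatively try to prove $\row(P(\rho)) = \drop(\rho)$ directly by matching the Edelman--Greene algorithm against iterated applications of $\drop_i$, but this requires comparable bump-path analysis, so I would favor the route above.)
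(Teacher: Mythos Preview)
Your proposal is correct. The paper does not give its own proof of this statement: it cites the theorem from \cite{EG87} and then remarks in one sentence that ``the result follows as well from Theorem~\ref{thm:drop}.'' Your argument is precisely what is needed to make that one-sentence remark into a proof: you supply the missing ingredient (the Key Lemma that $\row(P(\rho)) \equiv_{\mathrm{CK}} \rho$, which is Edelman--Greene's Lemma~6.23 and whose bump-path proof you correctly outline), observe that $\D(\row(P(\rho))) = P(\rho)$ is increasing of partition shape, and then invoke the uniqueness in Theorem~\ref{thm:drop} to identify $\row(P(\rho))$ with $\drop(\rho)$. The paper implicitly assumes the Key Lemma when it says the result follows from Theorem~\ref{thm:drop} --- the cited fact that $\row(P(\rho))$ is \emph{some} reduced word for $w$ is not by itself enough --- so your write-up is, if anything, more careful than the paper's sketch while following exactly the route it indicates.
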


Theorem~\ref{thm:cox-knu} gives canonical Coxeter--Knuth equivalence class representatives as the reduced words occuring as reading words of increasing Young tableaux. Thus the result follows as well from Theorem~\ref{thm:drop}.

Further, Edelman and Greene characterize how the recording tableaux differ for two reduced words that differ by an elementary Coxeter--Knuth equivalence. Refining \cite{EG87}(Definition~6.14), we have the following definition from \cite{Ass15}. 

\begin{definition}
  The \emph{elementary dual equivalence involutions}, denoted by $d_i$, act on standard Young tableaux by
  \begin{equation}
    d_i(T) = \left\{ \begin{array}{rl}
      s_{i-1} \cdot T & \text{if $i+1$ lies between $i$ and $i-1$ in $\word(T)$, } \\
      s_{i} \cdot T & \text{if $i-1$ lies between $i$ and $i+1$ in $\word(T)$, } \\
      T & \text{if $i$ lies between $i-1$ and $i+1$ in $\word(T)$, }
    \end{array} \right.
    \label{e:ede}
  \end{equation}
  where $s_i$ acts by interchanging $i$ and $i+1$, and $\word(T)$ is the row reading word.
\end{definition}

For example, the elementary dual equivalence involutions on standard Young tableaux of shape $(3,2)$ are shown in Fig.~\ref{fig:deg}.

\begin{figure}[ht]  
  \begin{center}
    \begin{tikzpicture}[xscale=2.5,yscale=1]
      \node at (0,0)   (a) {$\tableau{3 & 4 \\ 1 & 2 & 5}$}; 
      \node at (1,0)   (b) {$\tableau{2 & 4 \\ 1 & 3 & 5}$}; 
      \node at (2,0)   (c) {$\tableau{2 & 5 \\ 1 & 3 & 4}$}; 
      \node at (3,0)   (d) {$\tableau{3 & 5 \\ 1 & 2 & 4}$}; 
      \node at (4,0)   (e) {$\tableau{4 & 5 \\ 1 & 2 & 3}$}; 
      \draw[thick,color=blue  ,<->] (a.008) -- (b.172) node[midway,above] {$d_2$} ;
      \draw[thick,color=violet,<->] (a.352) -- (b.188) node[midway,below] {$d_3$} ;
      \draw[thick,color=purple,<->] (b) -- (c) node[midway,above] {$d_4$} ;
      \draw[thick,color=blue  ,<->] (c) -- (d) node[midway,above] {$d_2$} ;
      \draw[thick,color=violet,<->] (d.008) -- (e.172) node[midway,above] {$d_3$} ;
      \draw[thick,color=purple,<->] (d.352) -- (e.188) node[midway,below] {$d_4$} ;
    \end{tikzpicture}
  \end{center}
  \caption{\label{fig:deg}The elementary dual equivalence involutions on $\SYT(3,2)$.}
\end{figure}
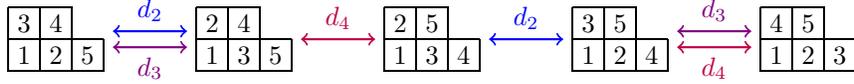

Edelman and Greene \cite{EG87}(Corollary~6.15) relate Coxeter--Knuth equivalence with dual equivalence through the recording tableaux as follows.

\begin{theorem}[\cite{EG87}] 
  For reduced words $\sigma,\tau$, we have $Q(\sigma)=d_i(Q(\tau))$ if and only if $\sigma = \CK_{n-i+1}(\tau)$.
  \label{thm:dual-CK}
\end{theorem}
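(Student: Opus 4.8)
The plan is to trace a single elementary Coxeter--Knuth move $\sigma=\CK_{n-i+1}(\tau)$ through the Edelman--Greene insertion and argue that its effect on the recording tableau is exactly the elementary dual equivalence involution $d_i$. Since $\CK_j$ only rearranges the three consecutive letters $\tau_{j-1},\tau_j,\tau_{j+1}$, and these are inserted consecutively (in the order dictated by the indexing convention, with $\rho_k,\ldots,\rho_1$ inserted from left to right), the insertion of all letters to the right of this window produces the same intermediate tableau $P'$ for both words, and the insertion of all letters to the left is then applied to a common tableau. So it suffices to analyze how three consecutive insertions into a fixed increasing tableau $P'$ differ when the three inserted values are permuted by the relevant commutation or braid move, and in particular how the three successively-added cells (which receive recording labels $m-1,m,m+1$ for the appropriate $m$, equivalently after the global reindexing the labels $i-1,i,i+1$) are positioned.

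First I would set up the correspondence between indices: because recording labels are $k-i+1$ when $\rho_i$ is inserted, the Coxeter--Knuth move $\CK_j$ acting on positions $j-1,j,j+1$ of a length-$n$ word corresponds under this reversal to the dual equivalence index $i=n-j+1$ acting on the recording labels $i-1,i,i+1$. Next I would split into the cases of Definition~\ref{def:dual-stanley}: (a) the braid case $\tau_{j+1}=\tau_{j-1}=\tau_j\pm1$, and (b),(c) the two commutation cases where the middle value of the three is swapped past a neighbor. In each case I would insert the three letters into a generic increasing tableau $P'$ and track the three bumping paths. The key point is that the three newly created cells lie in a configuration whose relative left/right placement is governed by whether the middle-sized of the three values was inserted first, second, or third — and this is precisely the data that distinguishes the three branches of $d_i$ in \eqref{e:ede}: whether $i+1$ lies between $i-1$ and $i$, whether $i-1$ lies between $i$ and $i+1$, or whether $i$ lies between its neighbors, in the row reading word of $Q$. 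I would verify that the braid case produces the ``$d_i$ acts trivially'' configuration (the three cells form a vertical-ish cluster with $i$ strictly between $i-1$ and $i+1$ in reading order), while the two commutation cases produce the two nontrivial swaps, matching $s_{i-1}$ and $s_i$ respectively. For the converse direction, since $\{\CK_j\}$ generates Coxeter--Knuth equivalence and $P$ is a complete invariant of the class (Theorem~\ref{thm:cox-knu}), and since for a fixed shape the recording tableau determines the reduced word within its class, a relation $Q(\sigma)=d_i(Q(\tau))$ forces $\sigma$ and $\tau$ to be Coxeter--Knuth equivalent with recording tableaux differing by $d_i$; combined with the forward direction applied to the generator, this pins down $\sigma=\CK_{n-i+1}(\tau)$.

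Alternatively — and this is probably the cleaner route given the machinery already in the paper — I would avoid re-deriving the bumping-path case analysis from scratch by invoking Theorem~\ref{thm:cox-knu} together with the general theory of dual equivalence: Theorem~\ref{thm:deg-red} says $\{\CK_i\}$ is a dual equivalence on $\R(w)$, and the Edelman--Greene map $\rho\mapsto(P(\rho),Q(\rho))$ restricts on each Coxeter--Knuth class to a bijection onto $\{P_0\}\times\SYT(\lambda)$ for the class's common insertion tableau $P_0$ of shape $\lambda$. One then checks that this bijection is an isomorphism of dual equivalence structures, i.e.\ intertwines $\CK_{n-i+1}$ with $d_i$; since both sides are dual equivalences and the map is a descent-preserving bijection on each class, it suffices to verify the intertwining on the restricted classes where $j-i\le 3$, which by the classification of small dual equivalence classes reduces to a finite check. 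The main obstacle in either approach is the same: carefully confirming the index reversal $i\leftrightarrow n-j+1$ and the orientation conventions (recall the paper's descents for key/Young tableaux and its reading-word conventions both involve reversals), so that the braid branch of $\CK$ lines up with the fixed-point branch of $d_i$ and the two commutation branches line up with $s_{i-1}$ versus $s_i$ correctly rather than being swapped. Getting this bookkeeping exactly right, rather than the combinatorics of bumping paths per se, is where the care is needed.
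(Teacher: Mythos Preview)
The paper does not give a detailed proof of this statement: it is cited from \cite{EG87} (their Corollary~6.15), and the only justification offered in the present paper is the one-line remark that ``Theorem~\ref{thm:dual-CK} follows from Theorem~\ref{thm:deg-red}.'' Your second approach is precisely this route: use that $\{\CK_i\}$ is a dual equivalence on $\R(w)$ (Theorem~\ref{thm:deg-red}), that $\rho\mapsto Q(\rho)$ is a $\Des$-preserving bijection from each Coxeter--Knuth class onto $\SYT(\lambda)$ (via Theorem~\ref{thm:cox-knu} and the Edelman--Greene correspondence), and that a $\Des$-preserving bijection between two dual equivalence structures must intertwine the involutions, since in a dual equivalence the action of each $d_i$ is locally determined by $\Des$. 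So your alternative matches the paper's stance exactly.

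Your first approach --- tracing three consecutive insertions through bumping paths --- is the original Edelman--Greene argument, and its outline is sound. However, your case correspondence contains a genuine error: the braid case is \emph{not} the fixed-point branch of $d_i$. When $\CK_j$ acts by a braid it acts nontrivially (sending $aba$ to $bab$ with $|a-b|=1$), and correspondingly $d_i$ must act nontrivially on the recording tableau; for instance with $\rho=(1,2,1)$ and $\sigma=\CK_2(\rho)=(2,1,2)$ one computes directly that $Q(\rho)$ and $Q(\sigma)$ differ by swapping the cells labelled $2$ and $3$. The fixed-point branch of $d_i$ instead corresponds to the ``otherwise'' case of Definition~\ref{def:dual-stanley}, in which $\rho_j$ is the middle value among $\rho_{j-1},\rho_j,\rho_{j+1}$ and $\CK_j$ fixes $\rho$. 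You correctly flagged that the index and orientation bookkeeping is the delicate part; this is exactly where it slipped.
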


Theorem~\ref{thm:dual-CK} follows from Theorem~\ref{thm:deg-red}, proving that Edelman--Greene insertion establishes a $\Des$-preserving bijection between elements of a Coxeter--Knuth equivalence class and standard Young tableaux of fixed shape.

Edelman and Greene use their insertion and recording tableaux to establish the following bijective correspondence \cite{EG87}(Theorem~6.25).

\begin{corollary}[\cite{EG87}]
  The Edelman--Greene correspondence $\rho \rightarrow \left(P(\rho), Q(\rho)\right)$ establishes a bijection
  \begin{equation}
    \R(w) \stackrel{\sim}{\longrightarrow} \bigsqcup_{\lambda} \left(\mathrm{Inc}_{\lambda}(w) \times \SYT(\lambda) \right),
\end{equation}
  where $\mathrm{Inc}_{\lambda}(w)$ is the set of increasing reduced words $\sigma$ for $w$ such that $\Des(\sigma)=\lambda$. Moreover, under this correspondence, $\Des(\rho) = \Des(Q(\rho))$. 
  \label{cor:biject-EG}
\end{corollary}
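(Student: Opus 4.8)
\emph{Proof proposal.} The plan is to deduce the corollary from Theorems~\ref{thm:cox-knu}, \ref{thm:dual-CK}, and \ref{thm:deg-red}, together with the characterization of the fundamental expansion of a Schur function in Definition~\ref{def:schur-F}, using the classical reversibility of Edelman--Greene insertion only for injectivity.

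First I would check that $\rho\mapsto(P(\rho),Q(\rho))$ lands in the stated codomain. As recalled from \cite{EG87}, $P(\rho)$ is an increasing Young tableau of some partition shape $\lambda$ whose row reading word is a reduced word for $w$; since $\D(\row(P(\rho)))=P(\rho)$ we have $\Des(\row(P(\rho)))=\lambda$, so (identifying an increasing Young tableau with its reading word) $P(\rho)$ represents an element of $\mathrm{Inc}_\lambda(w)$, while $Q(\rho)\in\SYT(\lambda)$ by construction. Injectivity is then the standard fact that Edelman--Greene insertion can be reversed exactly as Schensted insertion: reading the cells of $Q(\rho)$ in decreasing order of their labels locates, at each stage, the cell most recently created, and running the steps of Definition~\ref{def:insert-EG} backwards recovers the letters of $\rho$ one at a time \cite{EG87}. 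This is the one place the argument genuinely uses the bumping analysis of \cite{EG87} rather than the dual-equivalence machinery.

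Next I would analyze the fibers. Fix a partition $\lambda$ and an increasing Young tableau $P$ of shape $\lambda$ with $\row(P)\in\R(w)$. Any increasing Young tableau is the unique increasing tableau in its Coxeter--Knuth class (Theorems~\ref{thm:cox-knu} and \ref{thm:drop}), hence is fixed by insertion of its own reading word, so $P(\row(P))=P$; by Theorem~\ref{thm:cox-knu} the fiber $\{\rho\in\R(w):P(\rho)=P\}$ is exactly the Coxeter--Knuth equivalence class $C$ of $\row(P)$. By Theorem~\ref{thm:deg-red}, $\sum_{\rho\in C}F_{\Des(\rho)}$ equals a single Schur function $s_\mu$; since $\row(P)\in C$ contributes the term $F_\lambda$ with $\lambda$ a partition while the only partition-indexed fundamental occurring in $s_\mu$ is $F_\mu$, we get $\mu=\lambda$, so $\sum_{\rho\in C}F_{\Des(\rho)}=s_\lambda=\sum_{T\in\SYT(\lambda)}F_{\Des(T)}$. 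On the other hand $Q$ restricts to an injection $C\to\SYT(\lambda)$ by the previous paragraph, and it is descent-preserving: inserting a maximal increasing run of $\rho$ appends a run of cells to $Q$ containing no descent of $Q$, while crossing from one run of $\rho$ to the next creates a descent of $Q$ at that boundary, so that $\Des(\rho)=\Des(Q(\rho))$ once one accounts for the right-to-left insertion order and the relabeling $\CK_i\leftrightarrow d_{n-i+1}$ of Theorem~\ref{thm:dual-CK}. Comparing the two fundamental expansions and invoking linear independence of the $F_\alpha$, the descent-preserving injection $C\to\SYT(\lambda)$ preserves the number of elements with each descent composition and is therefore a bijection.

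Finally I would assemble the pieces: as $P$ ranges over increasing Young tableaux of a fixed shape $\lambda$ with reading word in $\R(w)$ the fibers $C$ partition $\R(w)$, each such $P$ is attained (by $\rho=\row(P)$), and on each fiber $\rho\mapsto Q(\rho)$ is a bijection onto $\SYT(\lambda)$; hence $\rho\mapsto(P(\rho),Q(\rho))$ is a bijection onto $\bigsqcup_\lambda\mathrm{Inc}_\lambda(w)\times\SYT(\lambda)$, and $\Des(\rho)=\Des(Q(\rho))$ is precisely the descent-preservation established above. I expect the descent-preservation step to be the main obstacle — the careful bookkeeping of where cells are appended to $Q$ within a run of $\rho$ and at run boundaries — which can be verified directly from Definition~\ref{def:insert-EG} or extracted from Theorem~\ref{thm:dual-CK}.
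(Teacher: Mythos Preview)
The paper does not supply its own proof of this corollary; it is stated as a known result of Edelman and Greene \cite{EG87}(Theorem~6.25), with no argument given beyond the citation. So there is no in-paper proof to compare against, and your attempt is effectively a reconstruction from the surrounding machinery.

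Your outline is mostly sound, but there is a genuine gap. The step ``the only partition-indexed fundamental occurring in $s_\mu$ is $F_\mu$, so $\mu=\lambda$'' is false: already in the paper's own Figure~\ref{fig:SYT}, the descent compositions for $\SYT(3,2)$ include $(2,2,1)$, which is a partition distinct from $(3,2)$. So $F_{(2,2,1)}$ occurs in $s_{(3,2)}$, and your identification of $\mu$ breaks down. Without $\mu=\lambda$ you cannot compare the two fundamental expansions to force surjectivity of $Q|_C$.

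The clean repair is to bypass the $\mu=\lambda$ identification entirely and use Theorem~\ref{thm:dual-CK} structurally rather than only for descent-preservation. Since $Q$ intertwines $\CK_{n-i+1}$ with $d_i$, the image $Q(C)\subseteq\SYT(\lambda)$ is closed under every $d_i$; it is nonempty because $Q(\row(P))\in Q(C)$; and $\SYT(\lambda)$ is a single dual equivalence class under the $d_i$. Hence $Q(C)=\SYT(\lambda)$, giving surjectivity directly. This also makes the identification $\mu=\lambda$ a consequence rather than a hypothesis: once $Q|_C:C\to\SYT(\lambda)$ is a $\Des$-preserving bijection, $\sum_{\rho\in C}F_{\Des(\rho)}=\sum_{T\in\SYT(\lambda)}F_{\Des(T)}=s_\lambda$. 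You gesture at this route in your final sentence, but it should replace the flawed argument in your third paragraph, not merely supplement it.
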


Taking fundamental quasisymmetric generating functions gives Theorem~\ref{thm:EG}.

\subsection{Weak insertion}
\label{sec:insertion-weak}

We generalize Edelman-Greene insertion to an algorithm on reduced words that outputs a pair tableau of \emph{key shape} such that the insertion tableau is a Yamanouchi key tableau (in particular, it is increasing) and the recording tableau is a standard key tableau. Leveraging Definition~\ref{def:insert-EG} along with Definitions~\ref{def:drop} and \ref{def:lift}, we have the following.

\begin{definition}
  For $P$ a Yamanouchi key tableau and $x$ a positive integer, define the \emph{weak insertion of $x$ into $P$}, denoted by $P \stackrel{x}{\weakarrow}$, to be $\lift(\drop(P)\stackrel{x}{\leftarrow})$.
  \label{def:insert-weak}
\end{definition}

Construct the \emph{weak correspondence} of a reduced word $\rho = (\rho_k,\ldots,\rho_1)$ by successively inserting the letters of $\rho$ from $k$ to $1$ into the empty tableau to create the \emph{weak insertion tableau} of $\rho$, denoted by $\wP(\rho)$. For example, Figure~\ref{fig:weak-P} shows the weak insertion tableau for the reduced word $\rho = (3,6,4,7,5,2,4)$.

\begin{figure}[ht]
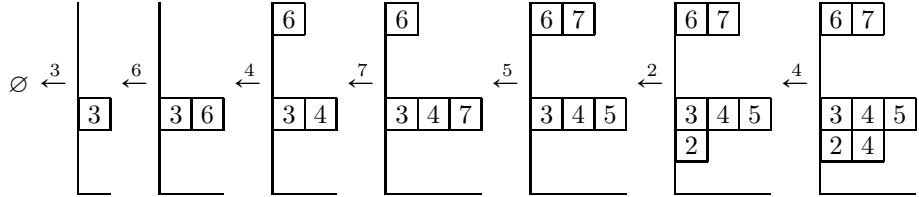

  \begin{displaymath}
    \arraycolsep=2pt
    \begin{array}{ccccccccccccccc}
    \raisebox{-1.8\cellsize}{$\varnothing$}
    & \raisebox{-1.75\cellsize}{$\xleftarrow{3}$} & \vline\tableau{\\ \\ \\ 3 \\ \\ \\\hline} 
    & \raisebox{-1.75\cellsize}{$\xleftarrow{6}$} & \vline\tableau{\\ \\ \\ 3 & 6 \\ \\ \\\hline} 
    & \raisebox{-1.75\cellsize}{$\xleftarrow{4}$} & \vline\tableau{6 \\ \\ \\ 3 & 4 \\ \\ \\\hline} 
    & \raisebox{-1.75\cellsize}{$\xleftarrow{7}$} & \vline\tableau{6 \\ \\ \\ 3 & 4 & 7 \\ \\ \\\hline} 
    & \raisebox{-1.75\cellsize}{$\xleftarrow{5}$} & \vline\tableau{6 & 7 \\ \\ \\ 3 & 4 & 5 \\ \\ \\\hline}
    & \raisebox{-1.75\cellsize}{$\xleftarrow{2}$} & \vline\tableau{6 & 7 \\ \\ \\ 3 & 4 & 5 \\ 2 \\ \\\hline}
    & \raisebox{-1.75\cellsize}{$\xleftarrow{4}$} & \vline\tableau{6 & 7 \\ \\ \\ 3 & 4 & 5 \\ 2 & 4 \\ \\\hline} 
    \end{array}    
  \end{displaymath}
  \caption{\label{fig:weak-P}The weak insertion tableau for the reduced word $\rho = (3,6,4,7,5,2,4)$.}
\end{figure}

Parallel to Theorem~\ref{thm:cox-knu}, we have the following.

\begin{theorem}
  For reduced words $\sigma,\tau$, we have $\wP(\sigma)=\wP(\tau)$ if and only if $\sigma$ and $\tau$ are Coxeter-Knuth equivalent.
  \label{thm:weak-knu}
\end{theorem}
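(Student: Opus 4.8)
The plan is to reduce the statement to facts already established about Edelman--Greene insertion and about the $\drop$ and $\lift$ maps. The key observation is that the weak insertion tableau $\wP(\rho)$ should record the same information as the Edelman--Greene tableau $P(\rho)$, merely repackaged into key shape via $\lift$. So the first step is to make precise the relationship between $\wP(\rho)$ and $P(\rho)$: I claim that for every reduced word $\rho$, the underlying data of $\wP(\rho)$ and $P(\rho)$ determine one another, specifically that the row reading word of $\wP(\rho)$ equals $\lift(\row(P(\rho)))$, i.e. the Yamanouchi representative of the Coxeter--Knuth class of $\rho$ obtained via Theorem~\ref{thm:lift}. Granting this claim, the theorem follows immediately: by Theorem~\ref{thm:cox-knu}, $P(\sigma)=P(\tau)$ iff $\sigma,\tau$ are Coxeter--Knuth equivalent; since $\lift$ is a well-defined function on increasing Young tableaux with reduced reading word (Definition~\ref{def:lift}), equal $P$'s force equal $\wP$'s, and conversely since $\drop$ recovers the increasing Young tableau from a Yamanouchi key tableau within the same class (by Theorem~\ref{thm:drop} applied to $\row(\wP(\rho))$, which by the claim is Coxeter--Knuth equivalent to $\rho$), equal $\wP$'s force equal $P$'s.

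The heart of the argument is therefore an induction on the length of $\rho$ establishing the claim $\row(\wP(\rho)) = \lift(\row(P(\rho)))$, or more robustly, that after inserting $\rho_k,\ldots,\rho_i$ the weak insertion tableau is obtained from the Edelman--Greene insertion tableau by applying $\lift$. The inductive step must analyze one application of Definition~\ref{def:insert-weak}: starting from $\wP' = \wP(\rho_k,\ldots,\rho_{i+1})$, a Yamanouchi key tableau, we form $\drop(\wP')$, perform an Edelman--Greene insertion of $\rho_i$, then apply $\lift$. The crucial point is that $\drop(\wP') = P(\rho_k,\ldots,\rho_{i+1})$: this is exactly Theorem~\ref{thm:drop} together with the inductive hypothesis that $\wP'$ lies in (the key-shape incarnation of) the same Coxeter--Knuth class and is Yamanouchi, since $\drop$ sends any representative to the unique increasing Young tableau of the class. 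Thus $\drop(\wP')\stackrel{\rho_i}{\leftarrow} = P(\rho_k,\ldots,\rho_i)$ by definition of the Edelman--Greene correspondence, and applying $\lift$ gives $\wP(\rho_k,\ldots,\rho_i)$ by Definition~\ref{def:insert-weak} and Theorem~\ref{thm:lift}. So the induction closes cleanly provided we know at each stage that $\drop$ applied to a Yamanouchi key tableau returns the Edelman--Greene tableau of the class --- which is the content of Theorems~\ref{thm:drop} and~\ref{thm:lift} and Definitions~\ref{def:drop},~\ref{def:lift}.

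The main obstacle I anticipate is a bookkeeping subtlety rather than a conceptual one: verifying that $\drop(\wP')$ is genuinely an increasing Young tableau \emph{of the correct partition shape} and equals $P(\rho_k,\ldots,\rho_{i+1})$ as a filled shape, not merely as a Coxeter--Knuth class. This requires that the row reading word of $\wP'$ is itself a reduced word (so that Theorem~\ref{thm:drop} applies), which follows because $\wP'$ is produced by weak insertion and $\lift$, $\drop$ preserve the Coxeter--Knuth class by Lemmas~\ref{lem:drop} and~\ref{lem:lift}, hence preserve reducedness. One should also confirm that Edelman--Greene insertion of a single letter into $P(\rho_k,\ldots,\rho_{i+1})$ indeed yields $P(\rho_k,\ldots,\rho_i)$ --- this is immediate from the definition of $P(\rho)$ as iterated insertion. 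A secondary point worth spelling out: the ``only if'' direction also needs that two reduced words in the same Coxeter--Knuth class produce the same $\wP$, which requires that $\lift\circ\drop$ (restricted appropriately) is well-defined on Coxeter--Knuth classes, and this is precisely Theorems~\ref{thm:drop} and~\ref{thm:lift}: $\drop$ gives the unique increasing Young tableau, and $\lift$ gives the unique Yamanouchi tableau, both canonical to the class. Once these identifications are in place the proof is a two-line appeal to Theorem~\ref{thm:cox-knu}.
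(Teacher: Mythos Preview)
Your proof is correct and follows essentially the same line as the paper's: both establish that $\wP(\rho)$ is the canonical (Yamanouchi) representative of the Coxeter--Knuth class of $\rho$, from which the equivalence is immediate. The difference is only in the endgame. You make the identity $\wP(\rho)=\lift(P(\rho))$ explicit via induction on the length of $\rho$, then invoke Theorem~\ref{thm:cox-knu} (Edelman--Greene) together with injectivity of $\lift$ (via $\drop\circ\lift=\mathrm{id}$ from Theorem~\ref{thm:drop}). The paper instead appeals to Theorems~\ref{thm:yam} and~\ref{thm:wdeg-red} to conclude that Yamanouchi representatives are unique per Coxeter--Knuth class, bypassing any direct mention of $P(\rho)$. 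Your route is slightly more concrete and avoids the weak dual equivalence machinery, at the cost of leaning on the Edelman--Greene result; the paper's route keeps the argument internal to the $\drop$/$\lift$ and weak-dual-equivalence framework developed in Sections~\ref{sec:equiv}--\ref{sec:positive}. Either way the inductive identity $\wP(\rho)=\lift(P(\rho))$ you isolate is exactly what is implicitly used, and your bookkeeping checks (reducedness preserved by $\lift$ and $\drop$, $\drop(\wP')$ landing on the correct partition-shaped tableau) are the right ones.
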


\begin{proof}
  By Theorem~\ref{thm:drop}, dropping a word so that the result is an increasing Young tableau maintains the Coxeter--Knuth equivalence class. By Theorem~\ref{thm:lift}, the Yamanouchi words are constructed by lifting the increasing Young tableaux, and by Theorem~\ref{thm:yam} they are the canonical representatives for each weak dual equivalence class. By Theorem~\ref{thm:wdeg-red}, Coxeter--Knuth equivalence classes are weak dual equivalence classes, and so the result follows.
\end{proof}

In order to define a weak recording tableau, we must show that the successive shapes when insertion a word are nested. To this end, we have the following.

\begin{lemma}
  Let $\rho$ be a Yamanouchi reduced word and $x$ a letter such that the concatenation $\rho x$ is reduced, and set $\sigma = \lift(\drop(\rho x))$. Then $\des(\rho)_i \leq \des(\sigma)_i$ for all $i$, and if $j$ is the unique index such that $\des(\sigma)_j = \des(\rho)_j+1$, then $\des(\sigma)_j \neq \des(\rho)_i$ for all $i<j$.
  \label{lem:SKT}
\end{lemma}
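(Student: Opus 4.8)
The claim is about how the weak descent composition changes under a single weak insertion. My plan is to track the diagram shape through the two phases of weak insertion separately: first the Edelman--Greene insertion $\drop(\rho)\stackrel{x}{\leftarrow}$ on the underlying increasing Young tableau, and then the lifting procedure $\lift$ that produces the Yamanouchi representative. The key observation is that Edelman--Greene insertion of a single letter adds exactly one cell to the increasing tableau $\drop(\rho)$, so at the level of partition shapes, $\Des(\drop(\rho)\stackrel{x}{\leftarrow})$ differs from $\Des(\drop(\rho)) = \mathrm{sort}(\des(\rho))$ by incrementing a single part (possibly creating a new part of size $1$). Then I would invoke Lemma~\ref{lem:extremal}: the lifting path from an increasing Young tableau to its Yamanouchi representative acts on descent compositions by a sequence of adjacent transpositions $s_{i}$, which only rearranges parts. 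Since $\des(\sigma)$ is obtained from $\Des(\drop(\rho)\stackrel{x}{\leftarrow})$ by such a rearrangement, and $\des(\rho)$ is obtained from $\mathrm{sort}(\des(\rho))$ by the analogous rearrangement for $\rho$, the comparison $\des(\rho)_i \le \des(\sigma)_i$ reduces to a statement purely about partitions and their extremal key-tableau positions.

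**Key steps in order.** First I would set $\lambda = \mathrm{sort}(\des(\rho)) = \Des(\drop(\rho))$ and let $\mu$ be the partition obtained by Edelman--Greene--inserting $x$, so $\mu/\lambda$ is a single cell, say in row $r$ (i.e.\ $\mu_r = \lambda_r + 1$ and $\mu_i = \lambda_i$ otherwise); this uses only that Edelman--Greene insertion adds one cell, which is stated in the excerpt. Second, I would use Theorem~\ref{thm:lift} and the isomorphism $\theta$ of Theorem~\ref{thm:wdeg-red} together with Lemma~\ref{lem:extremal} to identify $\des(\rho)$ with the specific rearrangement of $\lambda$ recorded by the canonical lifting path applied to $\theta(\rho)$, and likewise $\des(\sigma)$ with the rearrangement of $\mu$ recorded by the canonical lifting path of the inserted tableau. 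Third — the combinatorial heart — I would show that these two lifting paths are compatible: the canonical lift of the larger tableau refines or extends that of the smaller one in such a way that the new cell, wherever it lands after lifting, occupies a row whose original $\rho$-content was strictly smaller than the new content. Concretely, if $j$ is the row where $\des(\sigma)$ exceeds $\des(\rho)$ by one, then $\des(\sigma)_j = \lambda_r + 1 > \lambda_{r'} = \des(\rho)_i$ for the rows $i < j$, because the canonical lifting path moves parts into lower-indexed rows only when they are strictly larger (a direct consequence of how $j_k$ and $i_k$ are chosen in Definition~\ref{def:lift}, which selects maximal faithful lifting sequences). Finally, the inequality $\des(\rho)_i \le \des(\sigma)_i$ for all $i$ follows since the single new cell can only increase a part, and lifting only permutes parts, never decreases any in a way that violates the pointwise bound when combined with the dominance statement of Lemma~\ref{lem:yam}.

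**Main obstacle.** The hard part is establishing that the canonical lifting path of the post-insertion tableau is genuinely compatible with that of $\rho$ — i.e.\ that inserting one cell and then lifting yields the same rearrangement pattern as lifting and then inserting, up to the single new cell. The lifting maps $\lift_i$ do \emph{not} satisfy the nil-Hecke relations (as the excerpt explicitly warns via Figure~\ref{fig:key-raise}), so I cannot freely commute insertion past lifting; I must argue about the canonical path chosen by the greedy rule in Definition~\ref{def:lift}. I expect to handle this by an induction on rows (as in the proof of Theorem~\ref{thm:lift}), peeling off the top rows where $\des(\rho)$ already agrees with $\mathrm{flat}(\des(\rho))$ and focusing on the first row $j_1$ where the lifting path acts; the new cell either sits strictly below this active region (in which case the inductive step applies unchanged) or within it (in which case a short case analysis on whether the new cell is in the row being lifted into or lifted from pins down index $j$). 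The subsidiary claim that $\des(\sigma)_j \neq \des(\rho)_i$ for $i < j$ then follows from the strict monotonicity built into the selection of $i_k$ as the \emph{minimum} faithful starting index, which forces the part being moved upward past position $i$ to strictly exceed whatever part currently occupies position $i$.
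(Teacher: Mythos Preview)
Your plan is coherent, but it takes the long way around and leaves the hardest step unfinished. The ``compatibility of lifting paths'' you flag as the main obstacle is genuinely difficult---precisely because the $\lift_i$ do not satisfy nil-Hecke relations, there is no reason to expect the canonical lifting path for $\drop(\rho)\stackrel{x}{\leftarrow}$ to extend or refine the one for $\drop(\rho)$ in any simple way, and your inductive sketch (peeling off top rows, case analysis on where the new cell lands) does not actually pin this down. Without that, your reduction to ``a statement purely about partitions and their extremal key-tableau positions'' is not justified.

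The paper sidesteps this difficulty entirely with a concrete observation you are missing: by Lemma~\ref{lem:extremal}, the \emph{set of nonempty row indices} of $\wP(\rho)$ is exactly the set of entries in the first column of the Edelman--Greene tableau $P(\rho)$. This gives direct access to $\des(\rho)$ and $\des(\sigma)$ without ever comparing two lifting paths. The inequality $\des(\rho)_i\le\des(\sigma)_i$ then follows because EG-inserting one letter only lengthens one row. For the second claim, the paper argues by contradiction using the bumping structure of EG insertion itself: if $\des(\sigma)_j=\des(\rho)_i=c$ for some $i<j$, one tracks the column-$c$ entries in rows $i$ and $j$ of $P(\sigma)$ and observes that the EG bumping path creating the new cell would have had to bump an entry that was already terminal in $P(\rho)$, contradicting $\des(\sigma)_j=\des(\rho)_j+1$. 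This is a short, local argument inside the Young-tableau world---no lifting-path compatibility is needed. I would recommend abandoning the abstract comparison of canonical lifts and instead proving and using the first-column characterization directly.
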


\begin{proof}
  By Lemma~\ref{lem:extremal}, the locations of the nonempty rows of $\wP(\rho)$ are determined by the entries in the first column of $P(\rho)$, and these are the nonzero entries of $\des(\lift(\drop(\rho)))$. Thus inserting an additional letter will increase the length of one (possibly empty) row. Therefore $\des(\rho)_i \leq \des(\sigma)_i$ for all $i$.

  Let $j$ denote the unique index for which $\des(\sigma)_j = \des(\rho)_j+1$. Suppose, for contradiction, $\des(\sigma)_j=1$ and $\des(\rho)_i = 1$ for some $i<j$. Note the entries in $\wP(\sigma)$ in rows $i,j$ must be $i,j$, respectively. Furthermore, in $P(\sigma)$, both $i$ and $j$ are singleton cells, so the Edelman--Greene insertion of $x$ into $\rho$ must have caused $i$ to bump $j$. In particular, $j$ must have been a singleton cell of $P(\rho)$, but this contradicts that $\des(\sigma)_j = \des(\rho)_j+1$. Similarly, if $\des(\sigma)_j=\des(\rho)_i =c>1$ for some $i<j$, then the entry in row $j$ column $c$ must be strictly larger than the entry in row $i$ column $c$, and the same bumping argument applies, presenting the same contradiction. Thus we must have $\des(\sigma)_j \neq \des(\rho)_i $ for all $i<j$ as desired.
\end{proof}

We may now define the \emph{weak recording tableau} of $\rho$, denoted by $\wQ(\rho)$, by adding a cell with entry $i$ into the position of $\wP(\rho_k,\ldots,\rho_i) \setminus \wP(\rho_k,\ldots,\rho_{i+1})$. For example, Figure~\ref{fig:weak-Q} shows the weak recording tableau for $\rho = (3,6,4,7,5,2,4)$.

\begin{figure}[ht]
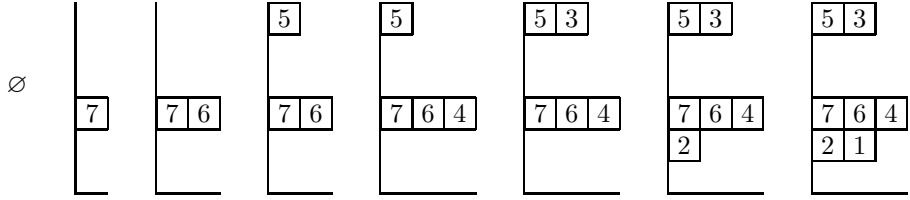

  \begin{displaymath}
    \arraycolsep=9pt
    \begin{array}{cccccccc}
    \raisebox{-1.8\cellsize}{$\varnothing$}
    & \vline\tableau{\\ \\ \\ 7 \\ \\ \\\hline} 
    & \vline\tableau{\\ \\ \\ 7 & 6 \\ \\ \\\hline} 
    & \vline\tableau{5 \\ \\ \\ 7 & 6 \\ \\ \\\hline} 
    & \vline\tableau{5 \\ \\ \\ 7 & 6 & 4 \\ \\ \\\hline} 
    & \vline\tableau{5 & 3 \\ \\ \\ 7 & 6 & 4 \\ \\ \\\hline} 
    & \vline\tableau{5 & 3 \\ \\ \\ 7 & 6 & 4 \\ 2 \\ \\\hline} 
    & \vline\tableau{5 & 3 \\ \\ \\ 7 & 6 & 4 \\ 2 & 1 \\ \\\hline} 
    \end{array}    
  \end{displaymath}
  \caption{\label{fig:weak-Q}The weak recording tableau for the reduced word $\rho = (3,6,4,7,5,2,4)$.}
\end{figure}

\begin{theorem}
  For $\rho$ a reduced word, the weak recording tableau $\wQ(\rho)$ is a standard key tableau.
  \label{thm:weak-Q}
\end{theorem}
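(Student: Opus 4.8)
The plan is to verify the three defining properties of a standard key tableau (Definition~\ref{def:key-tab}) for $\wQ(\rho)$ directly, extracting each one from Lemma~\ref{lem:SKT} applied to the individual insertion steps. First I would fix the bookkeeping. For each $i$, the word $(\rho_k,\ldots,\rho_{i+1})$ is a prefix of the reduced word $\rho$, hence reduced; the partial insertion tableau $\wP(\rho_k,\ldots,\rho_{i+1})$ is a Yamanouchi key tableau by construction, so its row reading word is a Yamanouchi reduced word (Theorem~\ref{thm:lift}); and $(\rho_k,\ldots,\rho_{i+1})\,\rho_i$ is again reduced. Thus Lemma~\ref{lem:SKT} applies to the insertion of $x=\rho_i$, and it says that $\shape(\wP(\rho_k,\ldots,\rho_i))$ contains $\shape(\wP(\rho_k,\ldots,\rho_{i+1}))$ and exceeds it by exactly one cell, which sits at the right end of a single row. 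Consequently the successive shapes nest, $\wQ(\rho)$ is a well-defined bijective filling of the key diagram $\shape(\wP(\rho))$ with $\{1,\ldots,n\}$, and, crucially, a larger entry of $\wQ(\rho)$ labels a cell that was created at an earlier insertion step (insertion runs from the index $n$ down to $1$).

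For the row-decreasing condition, I would argue as follows. Fix a row and two of its cells, in columns $c<c'$. The column-$c'$ cell is created by the insertion of some $\rho_{i'}$ that extends this row from length $c'-1$ to $c'$; at that moment column $c\le c'-1$ of the row is already occupied by a cell created at an earlier step, say the insertion of $\rho_i$ with $i>i'$. Hence the entry in column $c$ is $i$, which exceeds the entry $i'$ in column $c'$. So rows of $\wQ(\rho)$ decrease left to right. This is the easy part.

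The substantive part is the column condition, and here I would invoke the second clause of Lemma~\ref{lem:SKT}. Suppose the entry $i$ lies strictly above the entry $k$ in a common column $c$ of $\wQ(\rho)$, with $i<k$. Since $i<k$, the cell of $i$ is created after the cell of $k$, so at the step inserting $\rho_i$ the cell of $k$ — in its row $r'$ and column $c$ — already exists, whence row $r'$ has length at least $c$ at that moment. By Lemma~\ref{lem:SKT}, inserting $\rho_i$ extends a row lying above $r'$ (namely the row of $i$) to length exactly $c$, and no lower row — in particular $r'$ — has length $c$ at that moment; hence row $r'$ has length at least $c+1$ there. Its cell in column $c+1$ was therefore created before $\rho_i$ was inserted, so it carries an entry $j>i$ and lies immediately to the right of $k$ — exactly the witness required by Definition~\ref{def:key-tab}. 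I expect this translation — from the weak-descent-composition statement of Lemma~\ref{lem:SKT} to the geometric statement that $k$ always has a right neighbour with a larger label — to be the main obstacle; everything else is bookkeeping about the order in which cells are added.
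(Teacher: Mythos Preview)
Your proof is correct and follows essentially the same route as the paper: both arguments invoke Lemma~\ref{lem:SKT} to obtain nesting of shapes (well-definedness), deduce row-decrease from the fact that entries are recorded in decreasing order as rows grow left to right, and extract the column condition from the second clause of Lemma~\ref{lem:SKT} by noting that the lower row containing $k$ must already be strictly longer than $c$ when the cell for $i$ is created. Your write-up is somewhat more careful in spelling out why the lower row has length at least $c+1$ (combining ``length $\geq c$'' with ``length $\neq c$''), but the underlying argument is the same.
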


\begin{proof}
  By Lemma~\ref{lem:SKT}, the successive shapes created during the weak insertion of $\rho$ are nested, making the recording tableau well-defined. Entries are added to th recording tableau in decreasing order, ensuring that rows decrease left to right. The latter condition of Lemma~\ref{lem:SKT} ensures that if $i$ is added above an entry $k$, then the length of the lower row containing $k$ must be longer than that length of the upper row containing $i$. In particular, there must be an entry $j$ right of $k$ added before $i$, thus $j>i$. Therefore $\wQ(\rho)$ is indeed a standard key tableau.
\end{proof}

We can characterize how the recording tableaux differ for two reduced words that differ by an elementary Coxeter--Knuth equivalence using elementary weak dual equivalences \cite{Ass-W}(Definition~3.21). 

\begin{definition}[\cite{Ass-W}]
  The \emph{elementary weak dual equivalence involutions}, denoted by $\wD_i$, act on standard key tableaux as follows. Let $u,v,w$ be the cells with entries $i-1,i,i+1$ taken in column reading order. Then
  \begin{equation}
    \wD_i (T) = \left\{ \begin{array}{rl}
      b_{i}  (T) & \mbox{if $u,w$ are in the same row and $v$ is not} , \\
      s_{i-1}(T) & \mbox{else if $v$ has entry $i+1$} , \\
      s_{i}  (T) & \mbox{else if $v$ has entry $i-1$} , \\
      T & \mbox{otherwise},
    \end{array} \right.
    \label{e:ewde}
  \end{equation}
  where $b_{j}$ cycles $j-1,j,j+1$ so that $j$ shares a row with $j \pm 1$.
\end{definition}

The elementary weak dual equivalence involutions on standard key tableaux of shape $(0,3,0,2)$ are shown in Fig.~\ref{fig:wdeg}.

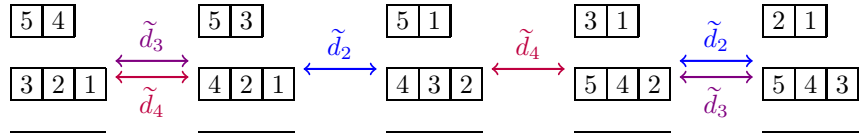
\begin{figure}[ht]  
  \begin{center}
    \begin{tikzpicture}[xscale=2.5,yscale=1]
      \node at (4,0)   (a) {$\vline\tableau{2 & 1 \\ \\ 5 & 4 & 3 \\ \\\hline}$}; 
      \node at (3,0)   (b) {$\vline\tableau{3 & 1 \\ \\ 5 & 4 & 2 \\ \\\hline}$}; 
      \node at (2,0)   (c) {$\vline\tableau{5 & 1 \\ \\ 4 & 3 & 2 \\ \\\hline}$}; 
      \node at (1,0)   (d) {$\vline\tableau{5 & 3 \\ \\ 4 & 2 & 1 \\ \\\hline}$}; 
      \node at (0,0)   (e) {$\vline\tableau{5 & 4 \\ \\ 3 & 2 & 1 \\ \\\hline}$}; 
      \draw[thick,color=blue  ,<->] (a.172) -- (b.008) node[midway,above] {$\wD_2$} ;
      \draw[thick,color=violet,<->] (a.188) -- (b.352) node[midway,below] {$\wD_3$} ;
      \draw[thick,color=purple,<->] (b) -- (c) node[midway,above] {$\wD_4$} ;
      \draw[thick,color=blue  ,<->] (c) -- (d) node[midway,above] {$\wD_2$} ;
      \draw[thick,color=violet,<->] (d.172) -- (e.008) node[midway,above] {$\wD_3$} ;
      \draw[thick,color=purple,<->] (d.188) -- (e.352) node[midway,below] {$\wD_4$} ;
    \end{tikzpicture}
  \end{center}
  \caption{\label{fig:wdeg}The elementary weak dual equivalence involutions on $\SKT(0,3,0,2)$.}
\end{figure}

We relate Coxeter--Knuth equivalence with weak dual equivalence through the weak recording tableaux as follows.

\begin{theorem}
  For reduced words $\sigma,\tau$, we have $\wQ(\sigma)=\wD_i(\wQ(\tau))$ if and only if $\sigma = \CK_{i}(\tau)$.
  \label{thm:wdual-CK}
\end{theorem}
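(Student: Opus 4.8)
The plan is to obtain Theorem~\ref{thm:wdual-CK} from the weak dual equivalence structure exactly as Theorem~\ref{thm:dual-CK} is obtained from Theorem~\ref{thm:deg-red}, with $\{\wD_i\}$ playing the role of the elementary dual equivalences and $\{\CK_i\}$ the role it already plays. Fix a Coxeter--Knuth class $C\subseteq\R(w)$; by Theorem~\ref{thm:weak-knu} the weak insertion tableau $\wP$ is constant on $C$, and we write $a$ for the shape of this common tableau. By Theorem~\ref{thm:wdeg-red} the maps $\{\CK_i\}$ make $C$ a weak dual equivalence, and by the classification of such (the machinery behind Theorems~\ref{thm:drop} and~\ref{thm:lift}) there is a $\des$-preserving isomorphism from $C$ onto $\SKT(a)$ carrying the weak dual equivalence on $C$ to the one generated by $\{\wD_i\}$. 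First I would verify that the weak recording map $\rho\mapsto\wQ(\rho)$ realizes such an isomorphism on $C$: it lands in $\SKT(a)$ by Theorem~\ref{thm:weak-Q} together with the fact that the successive shapes appearing in the weak insertion of $\rho$ exhaust $\shape(\wP(\rho))=a$, its $\des$-compatibility comes from Lemma~\ref{lem:SKT} and Lemma~\ref{lem:extremal}, and it is a bijection because weak insertion $\rho\mapsto(\wP(\rho),\wQ(\rho))$ is invertible --- one runs Definitions~\ref{def:insert-EG}, \ref{def:drop}, \ref{def:lift} backwards, using the cell of $\wQ$ with the largest remaining label to locate, at each stage, the last $\lift$ to undo, then the Edelman--Greene reverse bump, then the $\drop$.

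Granting that $\wQ|_C$ is a $\des$-preserving isomorphism onto $\SKT(a)$, being an isomorphism of the underlying edge-coloured structures it sends the $i$-th generating involution to the $i$-th generating involution, i.e.\ it intertwines $\CK_i$ on $C$ with $\wD_i$ on $\SKT(a)$. This gives $\wQ(\CK_i(\tau))=\wD_i(\wQ(\tau))$ for all $\tau\in C$ and all admissible $i$ --- the forward implication --- including the degenerate case where $\CK_i$ fixes $\tau$, since then $\wD_i$ fixes $\wQ(\tau)$. For the converse, if $\wQ(\sigma)=\wD_i(\wQ(\tau))$ then $\wQ(\sigma)$ and $\wQ(\tau)$ have the same shape $a$, and comparing with $\wQ(\CK_i(\tau))=\wD_i(\wQ(\tau))$ together with invertibility of weak insertion forces $\wP(\sigma)=\wP(\tau)$, so $\sigma$ lies in $C$; injectivity of $\wQ|_C$ then yields $\sigma=\CK_i(\tau)$.

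A second, more computational route --- closer to the philosophy of this paper --- is to reduce directly to Theorem~\ref{thm:dual-CK}. Since $\wP(\rho)=\lift(\drop(\rho))$ and both $\drop$ and $\lift$ amount to a $\Des$-tracked relocation of cells (Theorems~\ref{thm:drop} and~\ref{thm:lift}, Definition~\ref{def:inc_key}), the map $Q(\rho)\mapsto\wQ(\rho)$ is a fixed reshaping $\SYT(\lambda)\to\SKT(a)$ with $\lambda=\mathrm{sort}(a)$, and Lemma~\ref{lem:extremal} together with the statement of Corollary~\ref{cor:drop} shows this reshaping turns the $d_j$-action into the $\wD_j$-action after the index reversal coming from the opposite descent conventions for $\SYT$ and $\SKT$; combining with Theorem~\ref{thm:dual-CK} then gives $\CK_i$ on words $\leftrightarrow\wD_i$ on $\wQ$ with no net shift. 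Either way, I expect the main obstacle to be the invertibility of weak insertion: this is the one point where the recipe ``$\drop$, then Edelman--Greene insert, then $\lift$'' must be genuinely run in reverse, and care is needed because $\lift$ is applied along the canonical lifting path of Definition~\ref{def:lift} rather than indiscriminately, so the inverse algorithm must reconstruct that path from the recording data; once that is in hand, everything else is bookkeeping against results already established.
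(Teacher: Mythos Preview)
Your second route is the one the paper actually takes, but the paper executes it with a single outside ingredient rather than the lemmas you cite. The proof invokes \cite{Ass-W}(Theorem~3.24), which supplies an explicit bijection $\Phi:\SKT(a)\to\SYT(\mathrm{sort}(a))$ (drop to partition shape, replace $i$ by $n-i+1$, sort columns) satisfying $\Phi\circ\wD_i = d_{n-i+1}\circ\Phi$; since by construction $\Phi(\wQ(\rho))=Q(\rho)$, the statement reduces immediately to Theorem~\ref{thm:dual-CK}. Your invocation of Lemma~\ref{lem:extremal} and Corollary~\ref{cor:drop} does not do this job: those results are about how $\lift$ and $\drop$ act on \emph{insertion} tableaux and their descent compositions, not about how any reshaping map intertwines $d_j$ with $\wD_j$ on \emph{recording} tableaux. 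The intertwining $\Phi\circ\wD_i = d_{n-i+1}\circ\Phi$ is a separate combinatorial fact, proved in the cited reference, and is the whole content of the argument.

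Your first route has a genuine gap. You establish (or at least sketch) that $\wQ|_C$ is a $\des$-preserving bijection onto $\SKT(a)$, and then assert that ``being an isomorphism of the underlying edge-coloured structures it sends the $i$-th generating involution to the $i$-th generating involution.'' But a $\des$-preserving bijection is not automatically an isomorphism of edge-coloured graphs: distinct standard key tableaux of the same shape can share a weak descent composition, so $\des$ alone does not determine the image of each vertex, let alone force edges to match. To make this route work you would need a rigidity statement---that any $\des$-preserving bijection between two weak dual equivalences on the same index set intertwines the generators---and no such statement is available in the paper. Absent that, verifying that $\wQ$ carries $\CK_i$ to $\wD_i$ \emph{is} the theorem, and your argument becomes circular. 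Note also that the paper's proof never uses invertibility of weak insertion; that concern is irrelevant once you reduce to Theorem~\ref{thm:dual-CK} via $\Phi$.
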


\begin{proof}
  By \cite{Ass-W}(Theorem~3.24), the bijection $\Phi:\SKT(a)\rightarrow\SYT(\mathrm{sort}(a))$ that drops entries in a key tableau to partition shape, replaces $i$ with $n-i+1$, and sorts columns to increase bottom to top intertwines the elementary weak dual equivalence involutions on standard key tableaux with the elementary dual equivalence involutions on standard Young tableaux by $\Phi(\wD_i(T)) = d_{n-i+1}(\Phi(T))$. The result now follows from Theorem~\ref{thm:dual-CK}.
\end{proof}

The weak insertion and recording tableaux establish the following bijective correspondence, parallel to Corollary~\ref{cor:biject-EG}.

\begin{corollary}
  The weak correspondence $\rho \rightarrow \left(\wP(\rho), \wQ(\rho)\right)$ establishes a bijection
  \begin{equation}
    \R(w) \stackrel{\sim}{\longrightarrow} \bigsqcup_{a} \left( \Yam_a(w) \times \SKT(a) \right) ,
  \end{equation}
  where $\Yam_a(w)$ is the set of Yamanouchi reduced words $\sigma$ for $w$ such that $\des(\sigma)=a$. Moreover, under this correspondence, $\des(\rho) = \des(\wQ(\rho))$. 
  \label{cor:biject-wEG}
\end{corollary}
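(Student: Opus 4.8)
The plan is to split the statement into three parts — well-definedness of the codomain, bijectivity, and the descent identity — and to reduce each to results already established, with the Edelman--Greene correspondence (Corollary~\ref{cor:biject-EG}) and the weak dual equivalence machinery doing the heavy lifting. A useful preliminary observation I would record first: since $\drop\circ\lift$ is the identity on increasing Young tableaux with reduced reading word (by the uniqueness in Theorem~\ref{thm:drop}, as $\lift(Q)$ is Coxeter--Knuth equivalent to $Q$ and $\drop$ returns the unique increasing Young tableau of partition shape in that class), the weak insertions telescope: for $P = \lift(Q)$ one has $P\stackrel{x}{\weakarrow} = \lift(\drop(\lift(Q))\stackrel{x}{\leftarrow}) = \lift(Q\stackrel{x}{\leftarrow})$, and $Q\stackrel{x}{\leftarrow}$ is again increasing with reduced reading word. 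Hence $\wP(\rho) = \lift(P(\rho))$, so Theorem~\ref{thm:weak-knu} is also immediate from Theorem~\ref{thm:cox-knu} plus injectivity of $\lift$ (which has the left inverse $\drop$).

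\emph{Well-definedness.} By Theorem~\ref{thm:weak-knu}, $\rho\mapsto\wP(\rho)$ is constant on each Coxeter--Knuth class $C$, and by Theorems~\ref{thm:drop} and \ref{thm:lift} its common value is a Yamanouchi key tableau whose row reading word $\sigma\in\Yam(w)$; writing $a$ for its key shape, Lemma~\ref{lem:yam} (transported through the isomorphism of Theorem~\ref{thm:wdeg-red}) forces $\des(\sigma)=a$, so $\sigma\in\Yam_a(w)$. By Lemma~\ref{lem:SKT} the successive shapes occurring while inserting $\rho$ are nested, so $\wQ(\rho)$ is a well-defined filling of the same shape $a$, and it is a standard key tableau by Theorem~\ref{thm:weak-Q}. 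Thus $\rho\mapsto(\wP(\rho),\wQ(\rho))$ lands in $\bigsqcup_a\Yam_a(w)\times\SKT(a)$.

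\emph{Bijectivity.} I would fiber over Coxeter--Knuth classes: fix $C$, let $\sigma\in\Yam_a(w)$ be its $\wP$-image, and let $\lambda=\mathrm{sort}(a)$, which equals $\Des(\drop(\rho))$ for $\rho\in C$ compatibly with Theorem~\ref{thm:drop} and Propositions~\ref{prop:stanley-stable} and \ref{prop:key-stable}. It suffices to show $\wQ$ restricts to a bijection $C\to\SKT(a)$. For surjectivity: by Theorem~\ref{thm:wdual-CK}, $\wQ$ intertwines the Coxeter--Knuth involutions $\CK_i$ on $C$ with the elementary weak dual equivalence involutions $\wD_i$ on standard key tableaux; since $C$ is a single Coxeter--Knuth class it is connected under the $\CK_i$, so $\wQ(C)$ is a nonempty subset of $\SKT(a)$ closed under the $\wD_i$; and $\SKT(a)$ is connected under the $\wD_i$ because the bijection $\Phi$ used in the proof of Theorem~\ref{thm:wdual-CK} identifies it with $\SYT(\lambda)$, which is a single dual equivalence class under the $d_j$. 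Hence $\wQ(C)=\SKT(a)$. For injectivity I would count: Corollary~\ref{cor:biject-EG} gives $|C|=|\SYT(\lambda)|$, while $|\SKT(a)|=|\SYT(\lambda)|$ via $\Phi$, so the surjection $\wQ\colon C\to\SKT(a)$ is a bijection. (Alternatively, injectivity is formal once one verifies that $\Phi(\wQ(\rho))=Q(\rho)$ up to the relabeling $i\mapsto n-i+1$, after which the injectivity of $Q$ on $C$ from Corollary~\ref{cor:biject-EG} finishes; or one can bypass the whole count by inverting weak insertion directly — locate the corner of $\wP$ carrying the last-inserted letter via the minimal entry of $\wQ$, apply $\drop$, run Edelman--Greene reverse bumping from that corner, apply $\lift$, and iterate.) Assembling over all classes $C$ gives that the full correspondence is a bijection.

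\emph{Descent identity and conclusion.} The equality $\des(\rho)=\des(\wQ(\rho))$ I would prove by induction on $\ell(w)$ using Lemma~\ref{lem:SKT}: at each step that lemma pins down the row of $\wP$ — equivalently the part of the weak descent composition — receiving the new cell, matching the recursive rule building $\D(\rho)$ in Definition~\ref{def:diag-red}; tracking the recorded entries then gives the claim, exactly paralleling $\Des(\rho)=\Des(Q(\rho))$ in the Edelman--Greene setting. Finally, taking fundamental slide generating polynomials of both sides of the bijection and collapsing $\sum_{T\in\SKT(a)}\fund_{\des(T)}$ to $\key_a$ recovers Theorem~\ref{thm:yam} (hence Theorem~\ref{thm:LS}).

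The step I expect to be the main obstacle is the bijection $\wQ\colon C\to\SKT(a)$: the intertwining of Theorem~\ref{thm:wdual-CK} only shows the image is a union of $\wD$-connected components, so one must separately supply the connectivity of $\SKT(a)$ under the $\wD_i$ and the cardinality match $|C|=|\SKT(a)|$, or else commit to the reverse-insertion route, which is elementary in spirit but requires a careful case analysis of reverse Edelman--Greene bumping interacting with $\drop$ and $\lift$.
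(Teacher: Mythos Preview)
Your proposal is correct and matches the paper's approach: the paper states this corollary without proof, leaving it implicit from Theorems~\ref{thm:weak-knu}, \ref{thm:weak-Q}, and \ref{thm:wdual-CK} together with the parallel to Corollary~\ref{cor:biject-EG}, and your argument fills in exactly these details (fibering over Coxeter--Knuth classes, connectivity of $\SKT(a)$ under the $\wD_i$ via $\Phi$, and the cardinality match) more explicitly than the paper itself. Your parenthetical observation that $\Phi(\wQ(\rho))=Q(\rho)$ after the relabeling built into $\Phi$ is in fact the cleanest route to both injectivity and the descent identity, and it is precisely the mechanism underlying the paper's proof of Theorem~\ref{thm:wdual-CK}.
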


Taking fundamental slide generating polynomials gives Corollary~\ref{cor:schubert-new}.

%
%

\bibliographystyle{plain} 
\bibliography{weak_EG}

\begin{thebibliography}{10}

\bibitem{AS17}
Sami Assaf and Dominic Searles.
\newblock Schubert polynomials, slide polynomials, {S}tanley symmetric
  functions and quasi-{Y}amanouchi pipe dreams.
\newblock {\em Adv. in Math.}, 306:89--122, 2017.

\bibitem{AS18}
Sami Assaf and Dominic Searles.
\newblock {K}ohnert tableaux and a lifting of quasi-{S}chur functions.
\newblock {\em J. Combin. Theory Ser. A}, 156:85--118, 2018.

\bibitem{Ass07}
Sami~H. Assaf.
\newblock {\em Dual equivalence graphs, ribbon tableaux and {M}acdonald
  polynomials}.
\newblock PhD thesis, University of California Berkeley, 2007.

\bibitem{Ass15}
Sami~H. Assaf.
\newblock Dual equivalence graphs {I}: {A} new paradigm for {S}chur positivity.
\newblock {\em Forum Math. Sigma}, 3:e12, 33, 2015.

\bibitem{Ass-T}
Sami~H. Assaf.
\newblock Multiplication of a {S}chubert polynomial by a {S}tanley symmetric
  polynomial.
\newblock arXiv:1702.00132, 2017.

\bibitem{Ass-W}
Sami~H. Assaf.
\newblock Weak dual equivalence for polynomials.
\newblock arXiv:1702.04051, 2017.

\bibitem{BJS93}
Sara~C. Billey, William Jockusch, and Richard~P. Stanley.
\newblock Some combinatorial properties of {S}chubert polynomials.
\newblock {\em J. Algebraic Combin.}, 2(4):345--374, 1993.

\bibitem{Dem74a}
Michel Demazure.
\newblock D\'esingularisation des vari\'et\'es de {S}chubert
  g\'en\'eralis\'ees.
\newblock {\em Ann. Sci. \'Ecole Norm. Sup. (4)}, 7:53--88, 1974.
\newblock Collection of articles dedicated to Henri Cartan on the occasion of
  his 70th birthday, I.

\bibitem{Dem74}
Michel Demazure.
\newblock Une nouvelle formule des caract\`eres.
\newblock {\em Bull. Sci. Math. (2)}, 98(3):163--172, 1974.

\bibitem{EG87}
Paul Edelman and Curtis Greene.
\newblock Balanced tableaux.
\newblock {\em Adv. in Math.}, 63(1):42--99, 1987.

\bibitem{Ges84}
Ira~M. Gessel.
\newblock Multipartite {$P$}-partitions and inner products of skew {S}chur
  functions.
\newblock In {\em Combinatorics and algebra (Boulder, Colo., 1983)}, volume~34
  of {\em Contemp. Math.}, pages 289--317. Amer. Math. Soc., Providence, RI,
  1984.

\bibitem{Knu70}
Donald~E. Knuth.
\newblock Permutations, matrices, and generalized {Y}oung tableaux.
\newblock {\em Pacific J. Math.}, 34:709--727, 1970.

\bibitem{Koh91}
Axel Kohnert.
\newblock Weintrauben, {P}olynome, {T}ableaux.
\newblock {\em Bayreuth. Math. Schr.}, (38):1--97, 1991.
\newblock Dissertation, Universit{\"a}t Bayreuth, Bayreuth, 1990.

\bibitem{LS82}
Alain Lascoux and Marcel-Paul Sch{\"u}tzenberger.
\newblock Polyn\^omes de {S}chubert.
\newblock {\em C. R. Acad. Sci. Paris S\'er. I Math.}, 294(13):447--450, 1982.

\bibitem{LS90}
Alain Lascoux and Marcel-Paul Sch{\"u}tzenberger.
\newblock Keys \& standard bases.
\newblock In {\em Invariant theory and tableaux ({M}inneapolis, {MN}, 1988)},
  volume~19 of {\em IMA Vol. Math. Appl.}, pages 125--144. Springer, New York,
  1990.

\bibitem{Mac91}
I.~G. Macdonald.
\newblock Notes on {S}chubert polynomials.
\newblock LACIM, Univ. Quebec a Montreal, Montreal, PQ, 1991.

\bibitem{Mac95}
I.~G. Macdonald.
\newblock {\em Symmetric functions and {H}all polynomials}.
\newblock Oxford Mathematical Monographs. The Clarendon Press Oxford University
  Press, New York, second edition, 1995.
\newblock With contributions by A. Zelevinsky, Oxford Science Publications.

\bibitem{RS95}
Victor Reiner and Mark Shimozono.
\newblock Key polynomials and a flagged {L}ittlewood-{R}ichardson rule.
\newblock {\em J. Combin. Theory Ser. A}, 70(1):107--143, 1995.

\bibitem{Rob38}
G.~de~B. Robinson.
\newblock On the {R}epresentations of the {S}ymmetric {G}roup.
\newblock {\em Amer. J. Math.}, 60(3):745--760, 1938.

\bibitem{Sch61}
C.~Schensted.
\newblock Longest increasing and decreasing subsequences.
\newblock {\em Canad. J. Math.}, 13:179--191, 1961.

\bibitem{Sta84}
Richard~P. Stanley.
\newblock On the number of reduced decompositions of elements of {C}oxeter
  groups.
\newblock {\em European J. Combin.}, 5(4):359--372, 1984.

\bibitem{Tit69}
Jacques Tits.
\newblock Le probl\`eme des mots dans les groupes de {C}oxeter.
\newblock In {\em Symposia {M}athematica ({INDAM}, {R}ome, 1967/68), {V}ol. 1},
  pages 175--185. Academic Press, London, 1969.

\end{thebibliography}

\end{document}